\DeclareMathOperator*{\Res}{Res}
\DeclareMathOperator{\Princ}{Princ}
\DeclareMathOperator{\Div}{Div}
\DeclareMathOperator{\sign}{sign}
\DeclareMathOperator{\characteristic}{char}
\DeclareMathOperator{\Sp}{Sp}
\DeclareMathOperator{\divisor}{div}
\DeclareMathOperator{\Gal}{Gal}
\DeclareMathOperator{\Spec}{Spec}
\DeclareMathOperator{\adj}{adj}
\DeclareMathOperator{\pr}{pr}
\DeclareMathOperator{\Norm}{Norm}
\DeclareMathOperator{\AJ}{AJ}
\declaretheorem[name=Theorem,
style=theorem,
numberwithin=section,
refname={Theorem,Theorems},
Refname={Theorem,Theorems}]{mytheorem}
\declaretheorem[name=Lemma,
style=theorem,
sharenumber=mytheorem,
refname={Lemma,Lemmas},
Refname={Lemma,Lemmas}]{mylemma}
\declaretheorem[name=Corollary,
style=theorem,
sharenumber=mytheorem,
refname={Corollary,Corollaries},
Refname={Corollary,Corollaries}]{mycorollary}
\declaretheorem[name=Proposition,
style=theorem,
sharenumber=mytheorem,
refname={Proposition,Propositions},
Refname={Proposition,Propositions}]{myproposition}
\declaretheorem[name=Definition,
style=definition,
sharenumber=mytheorem,
refname={Definition,Definitions},
Refname={Definition,Definitions}]{mydefinition}
\declaretheorem[name=Remark,
style=remark,
sharenumber=mytheorem,
refname={Remark,Remarks},
Refname={Remark,Remarks}]{myremark}
\renewcommand{\itemautorefname}{\@gobble}
\renewcommand*{\eqref}[1]{%
  \hyperref[{#1}]{\textup{\tagform@{\ref*{#1}}}}%
}
\numberwithin{equation}{section}
\begin{document}
\title{}
\keywords{}
\author{Vishal Arul}
\title{Division by $1 - \zeta$ on superelliptic curves and jacobians}
\thanks{This research was supported in part by a grant from the Simons Foundation (\#402472 to Bjorn Poonen).}

\begin{abstract}
Yuri Zarhin gave formulas for ``dividing a point on a hyperelliptic curve by
2.'' Given a point $P$ on a hyperelliptic curve $\mathcal{C}$ of genus $g$,
Zarhin gives the Mumford representation of an effective degree $g$ divisor $D$
satisfying $2(D - g \infty) \sim P - \infty$.

The aim of this paper is to generalize Zarhin's result to superelliptic curves;
instead of dividing by 2, we divide by $1 - \zeta$. There is no Mumford
representation for divisors on superelliptic curves, so instead we give formulas
for functions which cut out a divisor $D$ satisfying $(1 - \zeta) D \sim P -
\infty$.

Additionally, we study the intersection of $(1 - \zeta)^{-1} \mathcal{C}$ and
the theta divisor $\Theta$ inside the jacobian $\mathcal{J}$. We show that the
intersection is contained in $\mathcal{J}[1 - \zeta]$ and compute the
intersection multiplicities.
\end{abstract}

\maketitle
\tableofcontents
\newpage
\section{Introduction}

Fix coprime integers $n, d \ge 2$ and an algebraically closed field $K$ with
$\characteristic(K) \nmid n$. Let $\mathcal{C}$ be the smooth projective model
of the curve given by the equation
\begin{equation}
\label{Equation:EquationOfC}
y^n = (x + \alpha_1) \cdots (x + \alpha_d)
\end{equation}
where $\alpha_{1}, \cdots, \alpha_{d}$ are distinct elements of $K$. Then
$\mathcal{C}$ has a unique point at infinity, denoted by $\infty$. The genus of
$\mathcal{C}$ is
\[
g = \frac{1}{2}(n - 1)(d - 1).
\]

Every point of the jacobian $\mathcal{J}$ of $\mathcal{C}$ can be represented as
$[D - g \infty]$ for some effective degree $g$ divisor $D$.  Then $\mathcal{C}$
naturally embeds into $\mathcal{J}$ via the Abel--Jacobi map $P \mapsto [P -
\infty]$; that is, the point $P$ of $\mathcal{C}$ goes to the divisor class $[P
- \infty]$.  Given divisors $X$ and $Y$ on $\mathcal{C}$, we write ``$X \sim
Y$'' to indicate that $X$ is linearly equivalent to $Y$. Moreover, the notation
``$X \ge Y$'' means that $X - Y$ is effective. Define the ``$\gcd$'' of a
collection of divisors $\{ X_{i} \}$ to be the maximal $X$ such that $X \le
X_{i}$ for all $i$. See \cite{Fulton_1989,poonen2006lectures} for more details
about curves, their jacobians, and divisor classes.

Given a rational function $f$ on $\mathcal{C}$, we write 
\[
\divisor(f) \colonequals \sum_{P} v_{P}(f) P
\]
to denote the principal divisor associated to $f$ and 
\[
\divisor_{0}(f) \colonequals \sum_{P \colon v_{P}(f) \ge 0} v_{P}(f) P
\]
to denote the effective portion of $\divisor(f)$.

We use $\zeta$ to denote both the primitive $n$th root of unity in $K$ and the
automorphism $\zeta : \mathcal{C} \to \mathcal{C}$ which acts on points of
$\mathcal{C}$ via
\[
\zeta : (x, y) \mapsto (x, \zeta y).
\]
Then $\zeta$ also induces an automorphism of $\mathcal{J}$, which we will also
denote by $\zeta$.  Then $1 - \zeta$ is an endomorphism of $\mathcal{J}$.  

Our goal is to provide formulas for ``division by $1 - \zeta$'' for points of
$\mathcal{C}$. For a fixed point $P$ on $\mathcal{C}$, we seek to find rational
functions on $\mathcal{C}$ which cut out an effective degree $g$ divisor $D$
satisfying the property
\[
(1 - \zeta) [D - g \infty] = [P - \infty],
\]
which is equivalent to
\[
(1 - \zeta) D \sim P - \infty.
\]

When $n = 2$, the curve $\mathcal{C}$ is hyperelliptic and we seek to divide by
$1 - \zeta = 2$. Let $\iota$ be the hyperelliptic involution on $\mathcal{C}$.
In \cite{zarhin2019division}, Zarhin provides formulas for division by $2$ in
the hyperelliptic setting. His formulas are written in terms of the Mumford
representation (see \cite{MumfordTataII}, page 3.17). More specifically, Zarhin
finds two rational functions $f_{1}, f_{2}$ on $\mathcal{C}$ for which there
exist effective degree $g$ divisors $D$ and $E$ such that
\begin{align*} 
\divisor(f_{1}) &= D + \iota(E) - 2 g \infty \\ 
\divisor(f_{2}) &= D + E + \iota(P) - (2 g + 1) \infty.
\end{align*} 
From this, we get $(1 - \iota) D \sim P - \infty$, or equivalently, $2 (D -
g \infty) \sim P - \infty$.

In the superelliptic setting, there is no direct analogue of the Mumford
representation. Instead, we find $n$ rational functions $f_{1}, \cdots, f_{n}$
such that for some degree $g$ effective divisors $D$ and $E$,
\begin{align*}
\divisor(f_{1}) &= D + \zeta^{-1} (E) - 2 g \infty \\
\divisor(f_{2}) &= D + \zeta^{-2} (E) + \zeta^{-1}(P) - (2 g + 1) \infty \\
\vdots  \\
\divisor(f_{n}) &= D + E + \zeta^{-1}(P) + \zeta^{-2}(P) + \cdots
+ \zeta^{-(n - 1)} (P) - (2 g + n - 1) \infty.
\end{align*}
The first two equations yield $\divisor(f_1 / \zeta^{*} f_2) = (1 -
\zeta) D - (P - \infty)$, so $(1 - \zeta) D \sim P - \infty$.
Moreover, we will show that
\begin{equation}
\label{Equation:DisTheGCD}
D = \gcd_{1 \le j \le n} \divisor_{0} f_{j}.
\end{equation}

When $n = 2$, our formulas reduce to Zarhin's. However, Zarhin's techniques do
not readily extend from $n = 2$ to general $n$; the main obstruction is the lack
of a Mumford representation when $n > 2$.

\begin{itemize}
\item 
When $n = 2$, it is the case that 
\begin{align*}
f_1 &= U(x) \\
f_2 &= y - V(x)
\end{align*}
for some $U(x), V(x) \in K[x]$ satisfying $U | (V^2 - \prod (x + \alpha_i))$.
(The pair $(U, V)$ is called the Mumford representation of $D$.) Assuming that
$f_1, f_2$ are in this special format greatly simplifies the rest of the
computation.  However, even when $n = 3$, one cannot assume that $f_1, f_2$ will
have this special form; one must work with the more general $f_i = U_{0, i}(x) +
U_{1, i}(x) y + U_{2, i}(x) y^2$.
\item 
There are other ways to represent divisor classes on superelliptic curves; see
\cite{Galbraith2002} for another possible representation and algorithms for
computations in that representation. However, we were not able to use their
representation for our formulas.
\end{itemize}

As an application, we can divide any point $(-\alpha_i, 0)$ by $1 - \zeta$.
Since $[(-\alpha_i, 0) - \infty]$ generate $\mathcal{J}[1 - \zeta]$, we obtain
generators for $\mathcal{J}[(1 - \zeta)^2]$. In particular, for the case $n = 3$
we know that $\mathcal{J}[(1 - \zeta_3)^2] = \mathcal{J}[3]$, so our formulas
give a representation for each 3-torsion divisor class on a trigonal
superelliptic curve. We also hope that our formula can be used to perform
explicit descent and compute the rational points on some superelliptic curves.

One curious aspect of this formula is that whenever $P \neq \infty$, no $D$
satisfying $(1 - \zeta) D \sim P - \infty$ lands on the theta divisor $\Theta$
of the jacobian. That is, $\mathcal{C} \cap (1 - \zeta) \Theta = \{ 0 \}$, which
implies that $(1 - \zeta)^{-1} \mathcal{C} \cap \Theta = \mathcal{J}[1 -
\zeta]$.  In \autoref{Section:Intersection}, we compute the intersection
multiplicity of $(1 - \zeta)^{-1} \mathcal{C}$ and $\Theta$ at each point of
$\mathcal{J}[1 - \zeta]$. 

\section{The formula for division by \texorpdfstring{$1 - \zeta$}{1 - zeta}}
\label{Section:DivisionFormula}

Let $T$ be an $n \times n$ matrix. Let $T_{i, j}$ denote the $(i, j)$-th entry
of $T$.  The indices $i, j$ will be taken modulo $n$ to make sense of
expressions of the form $T_{-1, 2 n}$ (this means $T_{n - 1, n}$). The notation
$T^{(i, j)}$ represents the submatrix of $T$ obtained by removing the $i$th row
and $j$th column of $T$. The notation ``$\adj T$'' stands for the adjugate
matrix of $T$; its $(i, j)$-th entry is defined to be $(\adj T)_{i, j}
\colonequals (-1)^{i + j} \det T^{(j, i)}$. It is a fact that $T (\adj T) =
(\adj T) T = (\det T) I_{n}$.

\subsection{Statement of main result}
\label{Subsection:StatementMainDivision}
Suppose that $P = (a, b)$. By translating $P$ and $\mathcal{C}$, we may assume
that the the $x$-coordinate of $P$ is zero; that is, $P = (0, b)$.  Choose
$r_{i}$ such that
\begin{align*}
r_{i}^{n} &= \alpha_{i} \\
\prod r_{i} &= b
\end{align*}
Let $s_{j}$ be the $j$th elementary symmetric polynomial evaluated on the
$r_{i}$, where the convention is that $s_{m} = 0$ for $m \not\in [0, d]$.  (So
$b = s_{d}$.) For each $\ell \in \mathbf{Z}$, define 
\[
A_{\ell}(x) = \sum_{k \ge 0} (-1)^{(n - 1) k} s_{\ell - n k} x^{k} \in K[x].
\]
Let $A, Z, M, N$ be the following $n \times n$ matrices with entries in $K[x,
y]$.
\begin{align*}
A &\colonequals \begin{bmatrix}
A_{d} & A_{d - 1} & \cdots & A_{d - n + 2} & A_{d - n + 1} \\
A_{d + 1} & A_{d} & \cdots & A_{d - n + 3} & A_{d - n + 2} \\
\vdots & \vdots &\ddots &\vdots &\vdots \\
A_{d + n - 2} & A_{d + n - 3} & \cdots & A_{d} & A_{d - 1} \\
A_{d + n - 1} & A_{d + n - 2} & \cdots & A_{d + 1} & A_{d}
\end{bmatrix} \\
Z &\colonequals \begin{bmatrix}
\zeta^{0} & 0 & \cdots & 0 & 0 \\
0 & \zeta^{-1} & \cdots & 0 & 0 \\
\vdots &\vdots &\ddots &\vdots &\vdots \\
0 & 0 & \cdots & \zeta^{-(n-2)} & 0 \\
0 & 0 & \cdots & 0 & \zeta^{-(n - 1)}
\end{bmatrix} \\
M &\colonequals A - y Z \\
N &\colonequals \adj M.
\end{align*}
The goal is to prove the following theorem.
\begin{mytheorem} \label{Theorem:Main}
The divisor 
\[
D \colonequals \gcd_{1 \le j \le n} \divisor_{0} N_{1, j}
\]
is an effective degree $g$ divisor on $\mathcal{C}$ such that
\[
(1 - \zeta) D \sim P - \infty.
\]
\end{mytheorem}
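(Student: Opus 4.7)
My plan follows the outline sketched in the introduction: I will show that the functions $f_j := N_{1,j}$ have divisors of the form predicted there (a common part $D$ of degree $g$, a $j$-dependent shift $\zeta^{-\sigma(j)}(E)$ of a single auxiliary effective divisor $E$ of degree $g$, a partial sum of shifts $\zeta^{-k}(P)$, and a pole only at $\infty$); from this the theorem is immediate, since for generic $E$ the moving summands have pairwise disjoint supports, so $\gcd_j \divisor_0 f_j = D$, and comparing $f_1$ with $\zeta^* f_2$ yields $(1-\zeta)D \sim P - \infty$.

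The first main step is to prove the polynomial identity
\[
\det M = -(y^n - p(x)) \quad \text{in } K[x,y], \qquad p(x) := \prod_i (x + \alpha_i).
\]
Expanding $\det(A - yZ)$ in powers of $y$, the leading coefficient is $-\det Z = -1$ and the constant term is $\det A$; the identity therefore reduces to $\det A = p(x)$ together with vanishing of all intermediate $y$-coefficients. I expect both to drop out of the generating identity $\sum_\ell A_\ell(x) T^\ell = \prod_i(1 + r_i T)/(1 - (-1)^{n-1} x T^n)$, or equivalently from identifying the eigenvalues of $Z^{-1}A$ with the $n$ values $\zeta^k y$ on $\mathcal{C}$. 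Once $\det M \equiv 0$ on $\mathcal{C}$, a rank check at smooth points shows that $M$ has rank $n-1$ on a dense open of $\mathcal{C}$, hence $N = \adj M$ has rank $1$ there, and its rows are pairwise proportional as rational functions on $\mathcal{C}$.

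The second main step is to compute $\divisor(f_j)$ on $\mathcal{C}$. Each $f_j$ lies in $K[x,y]$ and so is regular away from $\infty$; the pole order at $\infty$ (using $v_\infty(x) = -n$, $v_\infty(y) = -d$) is read off from the dominant diagonal term of the minor $\det M^{(j,1)}$. For the zero set I will combine two ingredients: (a) evaluating $f_j$ at the points $\zeta^{-k}P = (0, \zeta^{-k} b)$ reduces $M$ to an explicit matrix in the symmetric functions $s_m$, from which I expect to read off precisely which of the $\zeta^{-k}P$ lie on $\divisor_0 f_j$; and (b) the rank-one structure of $N$ on $\mathcal{C}$, together with the $\zeta$-twist between successive rows built into $Z$, should let me extract a single fixed effective divisor $E$ such that the ``residual'' zeros of $f_j$ (beyond $D$ and the $\zeta^{-k}P$) form exactly a shift $\zeta^{-\sigma(j)}(E)$ of $E$.

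With the divisors of the $f_j$ in hand, the conclusion is bookkeeping: $\gcd_j \divisor_0 f_j = D$ since for generic $E$ the moving parts are pairwise disjoint, $\deg D = g$ by balancing zeros against the pole of $f_1$, and the introduction's identity $\divisor(f_1/\zeta^* f_2) = (1-\zeta)D - (P - \infty)$ gives the linear equivalence. I expect the main obstacle to be step (b) above: rigorously exhibiting the single auxiliary divisor $E$ and showing that passing from $f_j$ to the next $f$ shifts the extra-zero contribution by exactly one power of $\zeta$. Step 1 reduces to a generating-function computation, and the pole-order analysis in Step 2 is routine.
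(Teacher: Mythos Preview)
Your outline is close to the paper's, and Steps~1 and~2 are right in spirit: the paper also proves $\det M = p(x) - y^n$, establishes the rank-one structure of $N$ on $\mathcal{C}$ via Jacobi's complementary minor formula, computes the pole orders $-v_\infty(N_{1,j}) = 2g + (n-j)$, and extracts a single effective $E$ with $\divisor_0 N_{1,j} = D + \zeta^{j-1}E + \sum_{k=j-n}^{-1}\zeta^k P$. The $\zeta$-shift between consecutive $j$ comes from a conjugation identity $\sigma M = C M C^{-1}$ for a cyclic permutation matrix $C$, which is the rigorous version of your step~(b).

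There are two genuine gaps in your conclusion, however, and both are repaired by the same missing ingredient.

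\emph{The degree of $D$.} Balancing zeros against the pole of $N_{1,n}$ gives only $\deg D + \deg E = 2g$; nothing in your plan forces the split to be $g+g$ rather than, say, $(g-1)+(g+1)$. The paper closes this with a separate geometric fact: $\AJ_\infty(\mathcal{C}) \cap (1-\zeta)\Theta = \{0\}$, proved by a Frobenius-gap argument (no regular function on $\mathcal{C}\setminus\{\infty\}$ has pole order exactly $nd-n-d = 2g-1$). If $\deg D \le g-1$ then $[(1-\zeta)D] \in (1-\zeta)\Theta$, but it also equals $[P-\infty] \in \AJ_\infty(\mathcal{C})\setminus\{0\}$, a contradiction; the same argument applied to $E$ forces $\deg E \ge g$, hence both equal $g$.

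\emph{The $\gcd$ computation.} The phrase ``for generic $E$'' is not available: $E$ is determined by $P$ and the choice of $r_i$, so you must rule out actual coincidences. The paper shows $\gcd_j \zeta^{j-1}E = 0$ via a pointwise rank bound ($N(t_x,t_y)\neq 0$ for all $(t_x,t_y)$), but must still exclude the possibility that some $\zeta^k P$ lies in $\zeta^{j-1}E$ for all $j$. This reduces to showing $P \not\le E$, and the argument again invokes the intersection theorem above: if $P \le E$, one builds from $E - P$ a degree-$(g-1)$ divisor whose image under $\zeta-1$ lands in $\AJ_\infty(\mathcal{C})\setminus\{0\}$.

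So your plan is missing one idea that is used twice: the set-theoretic emptiness of $\AJ_\infty(\mathcal{C}) \cap (1-\zeta)\Theta$ away from $0$. Without it, neither $\deg D = g$ nor the $\gcd$ identity goes through.
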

\begin{proof}
We will prove this theorem at the end of \autoref{Subsection:DivisionMainProof}.
\end{proof}

\subsection{Computational lemmas}
As mentioned before, view the entries of $A, Z, M, N$ as elements of $K[x, y]$.

\begin{mydefinition}
Define $\sigma$ to be the automorphism of $K[x, y]$ over $K[x]$ sending $y
\mapsto \zeta^{-1} y$. 
\end{mydefinition}

Now we seek to understand how $\sigma$ operates on the entries of $M$ and $N$.
We do so in \autoref{Lemma:N-Relations}, and the following notation makes it
easier to express those relations.

\begin{mydefinition}
Define 
\[
\delta_{i, j} \colonequals \begin{cases}
1 &\text{if } i \equiv j \pmod{n} \\
0 &\text{otherwise.} \\
\end{cases}
\]
\end{mydefinition}

\begin{mylemma} \label{Lemma:N-Relations}
We have 
\begin{align}
M_{i + 1, j + 1} &= ( (-1)^{n - 1} x)^{\delta_{j, n} - \delta_{i, n}} \cdot
\sigma M_{i, j} \label{Equation:M-PlusOne} \\
N_{i + 1, j + 1} &= ( (-1)^{n - 1} x)^{\delta_{j, n} - \delta_{i, n}} \cdot
\sigma N_{i, j} \label{Equation:N-PlusOne}.
\end{align}
Equivalently, if $C$ is the $n \times n$ matrix
\[
C = 
\left[
\begin{array}{c|c} 
0 & I_{n - 1} \\\hline 
(-1)^{n - 1} x & 0 
\end{array}
\right]
\]
(where the $I_{n - 1}$ block is the $(n - 1) \times (n - 1)$ identity matrix)
then
\begin{align}
\sigma M = C M C^{-1} \label{Equation:M-PlusOne-Conjugate} \\
\sigma N = C N C^{-1} \label{Equation:N-PlusOne-Conjugate}.
\end{align}
\end{mylemma}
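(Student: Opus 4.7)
The plan is to prove the conjugation identities \eqref{Equation:M-PlusOne-Conjugate} and \eqref{Equation:N-PlusOne-Conjugate}, and to observe that they are equivalent to the entrywise versions. For the equivalence, recall that $C$ acts on the standard basis by $C e_j = e_{j-1}$ for $2 \le j \le n$ and $C e_1 = (-1)^{n-1} x \, e_n$, so computing $(C M C^{-1})_{i, j}$ in the four cases distinguished by whether $i$ or $j$ equals $n$ produces exactly the correction factor $((-1)^{n-1} x)^{\delta_{j, n} - \delta_{i, n}}$ coming from the wrap-arounds, and the same computation works verbatim with $N$ in place of $M$.

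The key ingredient for \eqref{Equation:M-PlusOne-Conjugate} is the recurrence
\[
A_{\ell + n}(x) = s_{\ell + n} + (-1)^{n - 1} x \, A_\ell(x),
\]
obtained by reindexing the defining sum for $A_{\ell + n}$ via $k \mapsto k + 1$ and factoring out $(-1)^{n-1} x$. For every index $\ell = d + i - j$ with $i, j \in \{1, \dots, n\}$ appearing in the matrix $A$, one has $\ell + n > d$ so $s_{\ell + n} = 0$, and the recurrence collapses to $A_{\ell + n}(x) = (-1)^{n - 1} x A_\ell(x)$. I would then verify \eqref{Equation:M-PlusOne} by a direct case analysis on $(i, j)$: when neither index wraps, the entry $A_{i, j} = A_{d + i - j}$ is unchanged under $(i, j) \mapsto (i + 1, j + 1)$, while the diagonal entry of $y Z$ picks up a factor of $\zeta^{-1}$ that is absorbed by $\sigma$ acting as $y \mapsto \zeta^{-1} y$. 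When either $i$ or $j$ equals $n$ the index wraps and the recurrence is invoked, and the corner case $i = j = n$ reduces to $M_{1, 1} = A_d - y = \sigma(A_d - \zeta^{-(n-1)} y) = \sigma M_{n, n}$ via $\zeta^n = 1$.

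To pass from $M$ to $N = \adj M$, I would use that the adjugate reverses products and that $\adj(C^{-1}) = (\det C)^{-1} C$, so
\[
\adj(C M C^{-1}) = \adj(C^{-1}) \cdot \adj(M) \cdot \adj(C) = (\det C)^{-1} C \cdot N \cdot (\det C)\, C^{-1} = C N C^{-1}.
\]
Since $\sigma$ acts entrywise as a ring homomorphism, it commutes with the adjugate, and therefore $\sigma N = \adj(\sigma M) = \adj(C M C^{-1}) = C N C^{-1}$, which is \eqref{Equation:N-PlusOne-Conjugate}. The entrywise identity \eqref{Equation:N-PlusOne} then follows from the same equivalence as for $M$.

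The main obstacle is essentially bookkeeping: one must track the cyclic indexing mod $n$ through every boundary case and confirm that $s_{\ell + n} = 0$ wherever the recurrence for $A_\ell$ is invoked. Once the recurrence and the action of $C$ on the standard basis are pinned down, no further ideas are needed and the rest is a careful but routine verification.
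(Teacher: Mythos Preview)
Your proposal is correct and follows essentially the same route as the paper: establish the entrywise identity \eqref{Equation:M-PlusOne} from the recurrence $A_{\ell+n} = (-1)^{n-1} x A_{\ell}$ (valid since $s_{\ell+n}=0$ for the relevant range), note its equivalence with the conjugation form \eqref{Equation:M-PlusOne-Conjugate}, and then pass to $N$ by using that both $\sigma$ and conjugation by $C$ commute with $\adj$. Your explicit computation $\adj(CMC^{-1}) = (\det C)^{-1} C\, N\, (\det C)\, C^{-1} = C N C^{-1}$ is just a spelled-out version of the paper's one-line remark that conjugation commutes with $\adj$; the only caveat is that $\det C = x$ is not a unit in $K[x,y]$, so this step should be read in $K[x,x^{-1},y]$ (or the fraction field), after which the resulting polynomial identities descend.
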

\begin{proof}
\eqref{Equation:M-PlusOne} follows from the fact that for $\ell \ge d + 1$,
$A_{\ell} = (-1)^{n - 1} x A_{\ell - n}$ and the fact that for $i, j \in [1,
n]$, $M_{i, j} = A_{d + i - j} - \delta_{i, j} \zeta^{1 - i} y$.
\eqref{Equation:M-PlusOne} is equivalent to
\eqref{Equation:M-PlusOne-Conjugate}.  Both $\sigma$ and conjugation commute
with the $\adj$-operation, so taking $\adj$ of both sides of
\eqref{Equation:M-PlusOne-Conjugate} gives \eqref{Equation:N-PlusOne-Conjugate}.
\eqref{Equation:N-PlusOne-Conjugate} is equivalent to
\eqref{Equation:N-PlusOne}.
\end{proof}

\begin{mylemma} \label{Lemma:QijMakesSense} 
$N_{1, j}$ lies in the ideal 
\[
\left(x, \prod_{k = j}^{n - 1} (y - \zeta^k s_d)\right)
\]
of $K[x, y]$.
\end{mylemma}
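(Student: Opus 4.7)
The plan is to reduce to $x = 0$, where $M$ becomes upper triangular, and then exploit the block structure of $M^{(j, 1)}$ there. Since $N_{1, j}(x, y) - N_{1, j}(0, y) \in (x)$ in $K[x, y]$, it suffices to prove that $N_{1, j}(0, y)$ is divisible in $K[y]$ by $\prod_{k = j}^{n - 1}(y - \zeta^k s_d)$.

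First I would specialize the entries of $M$. From $A_\ell(x) = \sum_{k \ge 0} (-1)^{(n - 1) k} s_{\ell - n k} x^k$ and the convention $s_m = 0$ for $m \notin [0, d]$ we get $A_\ell(0) = s_\ell$, which vanishes for $\ell > d$. Combined with the identity $M_{i, j} = A_{d + i - j} - \delta_{i, j} \zeta^{1 - i} y$ recorded in the proof of \autoref{Lemma:N-Relations}, this forces every strict lower-triangular entry of $M(0, y)$ to vanish, so $M(0, y)$ is upper triangular with diagonal entries $s_d - \zeta^{1 - i} y$ for $i = 1, \dots, n$.

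Next I would analyze $M^{(j, 1)}(0, y)$ by partitioning its rows and columns at the break just before index $j$. A direct check shows that the resulting lower-left rectangular block consists of entries $M_{r, k}(0, y)$ with $r \ge j + 1$ and $k \le j$, hence $r > k$, so these entries all vanish. Therefore $M^{(j, 1)}(0, y)$ is block upper triangular, and its bottom-right $(n - j) \times (n - j)$ block is the lower-right corner of $M(0, y)$; it is itself upper triangular, with diagonal entries $s_d - \zeta^{-r'} y$ for $r' = j, j + 1, \dots, n - 1$. Expanding the determinant along the block decomposition and rewriting $s_d - \zeta^{-k} y = -\zeta^{-k}(y - \zeta^k s_d)$ realizes $N_{1, j}(0, y) = (-1)^{1 + j} \det M^{(j, 1)}(0, y)$ as a $K[y]$-multiple of $\prod_{k = j}^{n - 1}(y - \zeta^k s_d)$, which gives the claim.

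The argument is essentially a short determinant calculation once the triangular structure is in hand, so the main obstacle is simply careful index bookkeeping after the deletion of row $j$ and column $1$: one has to check both that the lower-left block of $M^{(j, 1)}(0, y)$ really vanishes and that the diagonal of the bottom-right block is indexed to produce exactly the factors $s_d - \zeta^{-k} y$ for $k = j, j + 1, \dots, n - 1$ with no off-by-one error (and in particular that the empty product case $j = n$ is recovered trivially).
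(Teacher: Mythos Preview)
Your proposal is correct and follows essentially the same route as the paper's proof: reduce modulo $x$, observe that $M(0,y)$ is upper triangular, and then read off a block upper-triangular decomposition of $M^{(j,1)}(0,y)$ whose lower-right block has determinant $\prod_{k=j}^{n-1}(s_d-\zeta^{-k}y)$. The paper writes out the blocks $U,V,W$ explicitly while you describe them in words, but the argument and the index bookkeeping are the same.
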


\begin{proof}
Since $A_{\ell} \equiv 0 \pmod{x}$ whenever $\ell \not\in [0, d]$,
\[
N_{1, j} = (-1)^{j + 1} \det M^{(j, 1)} \equiv (-1)^{j + 1} \det
\left[\begin{array}{c|c} U & V \\\hline 0 & W \end{array}\right] \pmod{x},
\]
where
\begin{align*}
U &= \begin{bmatrix}
s_{d - 1} & s_{d - 2} & s_{d - 3} & \cdots & s_{d - j + 1} \\
s_{d} - \zeta^{-1} y & s_{d - 1} & s_{d - 2} & \cdots & s_{d - j + 2} \\
& s_{d} - \zeta^{-2} y & s_{d - 1} & \cdots & s_{d - j + 3} \\
 & & \ddots & \ddots & \vdots  \\
& & & s_{d} - \zeta^{2 - j} y & s_{d - 1}
\end{bmatrix}\\
V &= \begin{bmatrix}
s_{d - j} & s_{d - j - 1} & \cdots & s_{d - n + 1} \\
s_{d - j + 1} & s_{d - j} & \cdots & s_{d - n + 2} \\
s_{d - j + 2} & s_{d - j + 1} & \cdots & s_{d - n + 3} \\
\vdots & \vdots & \ddots & \vdots \\
s_{d - 2} & s_{d - 3} & \cdots & s_{d- n + j - 1}
\end{bmatrix}\\
W &= \begin{bmatrix}
s_{d} - \zeta^{- j} y & s_{d - 1} & \cdots & s_{d- n + j + 1} \\
&  s_{d} - \zeta^{- (j + 1)} y & \cdots & s_{d- n + j + 2} \\
&  & \ddots & \vdots \\
&  & & s_{d} - \zeta^{-(n - 1)} y \\
\end{bmatrix}.
\end{align*}
Hence $N_{1, j} \equiv (-1)^{j + 1} \det U \cdot \det W \pmod{x}$. Since $W$ is
upper triangular,
\[
\det W = \prod_{k = j}^{n - 1} (s_d - \zeta^{-k} y)
\]
which implies
\[
N_{1, j} \equiv (-1)^{j + 1} (\det U) \cdot \prod_{k = j}^{n - 1} (s_d -
\zeta^{-k} y) \pmod{x},
\]
as desired. 
\end{proof}

We work in a slightly larger ring $L$ where the eigenvalues of $A$ are defined.
\begin{mydefinition}
\label{Definition:Lsigma}
Define
\[
L \colonequals K[x, y, T] / (T^n + (-1)^n x) \simeq K[y, T]. 
\]
Then $\sigma$ extends to an automorphism of $L$ over $K[T]$ sending $y \mapsto
\zeta^{-1} y$.
\end{mydefinition}

\begin{mylemma} \label{Lemma:A-evs}
For $1 \le k \le n$, define
\[
\lambda_{k} \colonequals \prod_{i = 1}^{d} (r_{i} + \zeta^{k} T) \in K[T]
\subseteq L.
\]
Then the $\lambda_k$ are distinct and form the complete set of eigenvalues of
$A$.
\end{mylemma}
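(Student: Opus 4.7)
The plan is to produce $n$ explicit eigenvectors of $A$ with entries in $L$ and read off the $\lambda_k$ as the corresponding eigenvalues. The defining relation of $L$ gives $(-1)^{n-1} x = T^n$, so for $k = 1, \ldots, n$ the element $c_k \colonequals \zeta^k T \in L$ satisfies $c_k^n = T^n = (-1)^{n-1} x$; that is, the $c_k$ are the $n$ distinct $n$-th roots of $(-1)^{n-1} x$ living in $L$. My candidate eigenvector for $\lambda_k$ is $v_k \colonequals (c_k, c_k^2, \ldots, c_k^n)^t \in L^n$.

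To verify $A v_k = \lambda_k v_k$, I would expand $(A v_k)_i = \sum_{j=1}^n A_{d+i-j}(x) \, c_k^j$ using $A_\ell(x) = \sum_{k' \ge 0} (-1)^{(n-1)k'} s_{\ell - nk'} x^{k'}$. Because $(-1)^{n-1} x = T^n = c_k^n$, each factor $(-1)^{(n-1)k'} x^{k'} c_k^j$ simplifies to $c_k^{nk' + j}$. As $(j, k')$ runs over $[1, n] \times \mathbf{Z}_{\ge 0}$, the integer $p \colonequals j + nk'$ hits each value $p \ge 1$ exactly once, so the double sum collapses to
\[
(A v_k)_i = \sum_{p \ge 1} s_{d + i - p} \, c_k^p = c_k^i \sum_{m \ge 1 - i} s_{d - m} \, c_k^m.
\]
The terms with $m < 0$ and $m > d$ vanish because $s_{d - m} = 0$ outside $m \in [0, d]$, so $(A v_k)_i = c_k^i \sum_{m=0}^d s_{d-m} c_k^m$. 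Expanding $\lambda_k = \prod_{i=1}^d (r_i + \zeta^k T) = \sum_{m=0}^d s_{d-m} c_k^m$ via elementary symmetric polynomials identifies this with $\lambda_k c_k^i$, as required.

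For distinctness, the leading coefficient of $\lambda_k \in K[T]$ in $T$ is $\zeta^{kd}$. Since $\gcd(n, d) = 1$, the values $\zeta^{kd}$ for $k = 1, \ldots, n$ are distinct elements of $K$, so the $\lambda_k$ are distinct polynomials in $K[T]$. Each $\lambda_k$ is then an eigenvalue of the $n \times n$ matrix $A$ and these are pairwise distinct, so the characteristic polynomial of $A$ is divisible by the monic degree-$n$ polynomial $\prod_k (\lambda - \lambda_k)$ and hence equals it. The one nontrivial step is the telescoping of the double sum in the eigenvalue verification; the rest is bookkeeping, made possible by the fact that $L$ was set up precisely so that $c_k^n = (-1)^{n-1} x$.
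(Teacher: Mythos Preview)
Your proof is correct and follows essentially the same approach as the paper: you exhibit the same explicit eigenvectors (your $v_k = (c_k, c_k^2, \ldots, c_k^n)^\top$ is just $c_k$ times the paper's $(1, c_k, \ldots, c_k^{n-1})^\top$), perform the same telescoping via $c_k^n = (-1)^{n-1} x$, and use the same $T^d$-coefficient argument for distinctness.
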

\begin{proof} The $\lambda_k$ are distinct because the $T^d$-coefficient of
$\lambda_k$ is $\zeta^{k d}$ and $d$ is coprime to $n$.

Now we show that each $\lambda_k$ is an eigenvalue of $A$ by showing that 
\[
v_k \colonequals \begin{bmatrix}
1 & \zeta^{k} T & \cdots & \zeta^{(n - 1) k} T^{n - 1} \end{bmatrix}^{\top}
\]
is
a corresponding eigenvector. We will show that $A v_k = \lambda_k v_k$ by
showing that their $j$th entries for $j \in [1, n]$ are the same. This will
complete the proof.

We first compute $(A v_k)_{j}$ as follows:
\begin{align*}
(A v_k)_{j} &= \sum_{i = 1}^{n} \zeta^{k (i - 1)} T^{i - 1} A_{d + j - i} \\
&= \sum_{i = 1}^{n} \zeta^{k (i - 1)} T^{i - 1} \sum_{m \ge 0} (-1)^{(n - 1) m}
s_{d + j - i - m n} x^{m} \\
&= \sum_{i = 1}^{n} \zeta^{k (i - 1)} T^{i - 1} \sum_{m \ge 0}  (-1)^{(n - 1) m}
s_{d + j - i - m n} (- (-T)^{n})^{m}  \\
&= \sum_{i = 1}^{n} \sum_{m \ge 0}  \zeta^{k (i - 1)} s_{d + j - i - m n}
 T^{i + n m - 1}  \\
&= \sum_{i = 1}^{n} \sum_{m \ge 0}  \zeta^{k (i + m n - 1)} s_{d + j - i - m n}
 T^{i + n m - 1}. 
\end{align*}
As $i$ and $m$ vary in the range $1 \le i \le n$ and $m \ge 0$, the quantity $i
+ m n$ represents every positive integer exactly once. However, $s_{d + j - i -
m n}$ will be zero whenever $i + m n - j \not\in [0, d]$. So we may perform the
change of coordinates $a \colonequals i + m n - j$ and turn this into the finite
sum
\begin{align*}
(A v_k)_{j} &= \sum_{a = 0}^{d} \zeta^{k(j + a - 1)} s_{d - a} T^{j + a - 1} \\
&= \zeta^{k (j - 1)} T^{j - 1} \sum_{a = 0}^{d} (\zeta^{k} T)^a s_{d - a} \\
&= \zeta^{k (j - 1)} T^{j - 1} \sum_{a = 0}^{d} (\zeta^{k} T)^{a}
\sum_{i_1 < i_2 < \cdots < i_{d - a}} r_{i_1} \cdots r_{i_{d - a}} \\
&= \zeta^{k (j - 1)} T^{j - 1} \prod_{i = 1}^{d} (r_i + \zeta^k T) \\
&= \zeta^{k (j - 1)} T^{j - 1} \lambda_k \\
&= (\lambda_k v_k)_{j}.
\end{align*}
Hence, $v_k$ is a nonzero eigenvector of $A$ with eigenvalue $\lambda_k$. Since
we have shown that $\{ \lambda_k \}$ are $n$ distinct eigenvalues of $A$, they
must be all the eigenvalues of $A$.
\end{proof}

\begin{mylemma} \label{Lemma:A,M-det}
We have
\begin{align*}
\det A &= \prod_{i = 1}^{d} (x + \alpha_{i}) \\
\det M &= \prod_{i = 1}^{d} (x + \alpha_{i}) - y^{n}.
\end{align*}
\end{mylemma}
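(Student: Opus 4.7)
The plan is to handle the two formulas separately: $\det A$ follows from the eigenvalue description in \autoref{Lemma:A-evs}, and $\det M$ then follows by combining this with a symmetry argument based on \autoref{Lemma:N-Relations}.

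For $\det A$: since \autoref{Lemma:A-evs} produces $n$ distinct eigenvalues $\lambda_k \in L$ for $A$, the degree-$n$ characteristic polynomial factors as $\det(XI - A) = \prod_k (X - \lambda_k)$ in $L[X]$, so $\det A = \prod_{k=1}^{n} \lambda_k$. Shifting the index using $\zeta^n = 1$ rewrites this as $\prod_{i=1}^d \prod_{k=0}^{n-1}(r_i + \zeta^k T)$. The identity $\prod_{k=0}^{n-1}(z - \zeta^k w) = z^n - w^n$, applied with $z = r_i$ and $w = -T$, collapses each inner product to $r_i^n - (-T)^n = \alpha_i - (-1)^n T^n$. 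The defining relation $T^n + (-1)^n x = 0$ of $L$ gives $(-1)^n T^n = -x$, so this simplifies to $x + \alpha_i$. Hence $\det A = \prod_i (x + \alpha_i)$, an equality inside $K[x] \subseteq L$.

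For $\det M$: since $M = A - yZ$ has entries affine in $y$, $\det M \in K[x,y]$ has degree at most $n$ in $y$. By \eqref{Equation:M-PlusOne-Conjugate}, $\sigma M = C M C^{-1}$, so $\sigma(\det M) = \det M$; since $\sigma$ fixes $x$ and sends $y \mapsto \zeta^{-1} y$, $\det M$ is a polynomial in $y^n$, so combined with the degree bound it must take the form $\det M = P(x) + c \, y^n$ for some $P(x) \in K[x]$ and constant $c \in K$. Setting $y = 0$ recovers $P(x) = \det A = \prod_i(x+\alpha_i)$. The constant $c$ is read off directly: the only way to produce a $y^n$ term in the cofactor expansion is to select the $-y \zeta^{-(j-1)}$ entry from each diagonal position, giving $c = (-1)^n \prod_{j=1}^{n} \zeta^{-(j-1)} = (-1)^n \zeta^{-n(n-1)/2}$, which equals $-1$ for both parities of $n$.

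The essential trick I expect to be the main obstacle to spot is the $\sigma$-invariance of $\det M$, which uniformly annihilates every $y$-power strictly between $0$ and $n$. Without this observation, one faces the messier task of directly verifying that all the cross terms in the cofactor expansion of $\det M$ cancel out.
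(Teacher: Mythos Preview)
Your proof is correct and follows essentially the same approach as the paper: both compute $\det A$ via the eigenvalues from \autoref{Lemma:A-evs}, and both obtain $\det M$ by combining the $\sigma$-invariance coming from \eqref{Equation:M-PlusOne-Conjugate} with the identification of the constant term ($y=0$) and the $y^n$-coefficient. Your explicit verification that $(-1)^n \zeta^{-n(n-1)/2} = -1$ for both parities of $n$ is a bit more detailed than the paper's one-line assertion $\prod_{i=0}^{n-1}(-\zeta^i y) = -y^n$, but the substance is identical.
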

\begin{proof}
The first equality comes directly from multiplying the eigenvalues computed in
\autoref{Lemma:A-evs} and by observing that
\[
\prod_{k = 0}^{n - 1} (r_{i} + \zeta^{k} T) = r_{i}^{n} - (-1)^{n} T^{n} =
\alpha_{i} + x.
\]
Observe that $\det M$ is a polynomial in $y$ of degree $n$ with leading term
$\prod_{i = 0}^{n - 1} (-\zeta^{i} y) = -y^{n}$.  By taking the determinant of
both sides of \eqref{Equation:M-PlusOne-Conjugate}, we deduce that $\det M$
is invariant under $\sigma$.  Therefore $\det M$ can have no other terms in $y$,
so it is of the form $\det M = q(x) - y^{n}$. By plugging in $y = 0$ we see that
$q(x) = \det (A - 0 \cdot Z) = \det A$, so the rest comes from the computation
of $\det A$.
\end{proof}

\begin{mylemma} \label{Lemma:N-rk-bound}
The determinant of any $2 \times 2$ submatrix of $N$ is divisible by $y^{n} - (x
+ \alpha_1) \cdots (x + \alpha_d)$.
\end{mylemma}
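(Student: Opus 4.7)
The plan is to invoke Jacobi's complementary minor identity for the adjugate matrix. This classical identity says that for any $n \times n$ matrix $M$ over a commutative ring and any row/column index sets $I, J \subseteq \{1, \dots, n\}$ of the same cardinality $k$,
\[
\det \bigl( (\adj M)_{I, J} \bigr) = \pm (\det M)^{k - 1} \cdot \det \bigl( M_{J^c, I^c} \bigr),
\]
where the subscripts denote the submatrix on the indicated rows and columns and $I^c, J^c$ are complements in $\{1, \dots, n\}$. Since it is a universal polynomial identity in the entries of $M$, it applies in particular over $K[x, y]$.

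Applied with $k = 2$, the identity says that the determinant of any $2 \times 2$ submatrix of $N = \adj M$ equals $\pm \det M$ times the determinant of some complementary $(n - 2) \times (n - 2)$ submatrix of $M$. By \autoref{Lemma:A,M-det}, $\det M = \prod_{i = 1}^d (x + \alpha_i) - y^n$, so every $2 \times 2$ minor of $N$ is divisible by $y^n - \prod_{i = 1}^d (x + \alpha_i)$, which is exactly the claim.

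The only real step is to cite or briefly verify Jacobi's identity; nothing superelliptic-specific is needed beyond the determinant of $M$ already computed. If one wishes to bypass that identity, the same conclusion can be obtained by passing to the coordinate ring $R \colonequals K[x, y] / (y^n - \prod (x + \alpha_i))$: the relation $M N = (\det M) I_n$ becomes $M N = 0$ in $R$, so every column of $N$ lies in $\ker M$, and it suffices to show that $M$ has rank exactly $n - 1$ at the generic point of $\mathcal{C}$. This can be extracted from \autoref{Lemma:A-evs} by exhibiting a nonvanishing $(n - 1) \times (n - 1)$ minor of $M$ at the generic point (e.g.\ via the eigenbasis $\{v_k\}$), forcing $N$ to have rank at most $1$ over $K(\mathcal{C})$ and hence all $2 \times 2$ minors to vanish in $R$. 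Either route works; I expect the Jacobi identity to give the cleanest proof, and no step is a serious obstacle.
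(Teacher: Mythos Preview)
Your primary approach is correct and essentially identical to the paper's: both apply Jacobi's complementary minor formula with $k = 2$ to express any $2 \times 2$ minor of $N = \adj M$ as $\pm \det M$ times a complementary $(n-2) \times (n-2)$ minor of $M$, and then invoke \autoref{Lemma:A,M-det}. Your alternative rank argument over $K(\mathcal{C})$ is extra and unnecessary here, but not incorrect.
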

\begin{proof}
We show this for the submatrix of $N$ obtained by taking the $\{ i, k \}$ rows
and $\{ j, \ell \}$ columns. Let $F$ be the submatrix of $M$ obtained by
deleting the $\{ i, k \}$ rows and $\{ j, \ell \}$ columns.  Apply Jacobi's
complementary minor formula (Theorem 2.5.2 of \cite{prasolov1994problems}) with
these rows and columns to obtain
\[
\det \begin{bmatrix}
N_{i, j} & N_{i, \ell} \\
N_{k, j} & N_{k, \ell}
\end{bmatrix} = \pm \det M \cdot \det F.
\]
Since $-\det M = y^{n} - (x + \alpha_1) \cdots (x + \alpha_d)$ by
\autoref{Lemma:A,M-det}, we are done.
\end{proof}

For $t_x, t_y \in K$, define $A(t_x), M(t_x, t_y), N(t_x, t_y) \in
M_{n}(K)$ by substituting $x = t_x$ and $y = t_y$.
\begin{mylemma} \label{Lemma:A-rk-Bound}
For any $t_x \in K$, the rank of $A(t_x)$ is at least $n - 1$.
\end{mylemma}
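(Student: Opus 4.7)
The plan is to exploit the squarefreeness of $\det A$ via the Smith normal form of $A$ viewed as a matrix over the PID $K[x]$.

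First I will invoke \autoref{Lemma:A,M-det} to recall that $\det A = \prod_{i = 1}^{d} (x + \alpha_{i})$, which is squarefree because the $\alpha_{i}$ are distinct. Then I will put $A$ in Smith normal form: write $A = P D Q$ with $P, Q \in \mathrm{GL}_{n}(K[x])$ and $D = \mathrm{diag}(d_{1}, \ldots, d_{n})$ satisfying $d_{1} \mid d_{2} \mid \cdots \mid d_{n}$. Since $\det P$ and $\det Q$ are nonzero constants, $P(t_{x})$ and $Q(t_{x})$ stay invertible over $K$, so $\mathrm{rank}\, A(t_{x}) = \#\{\, i : d_{i}(t_{x}) \neq 0 \,\}$.

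I will conclude by contradiction: suppose $\mathrm{rank}\, A(t_{x}) \le n - 2$. Then at least two of the $d_{i}$ vanish at $t_{x}$, and the divisibility chain $d_{1} \mid \cdots \mid d_{n}$ forces in particular $d_{n-1}(t_{x}) = d_{n}(t_{x}) = 0$. Hence $(x - t_{x})^{2}$ divides $d_{n-1} d_{n}$, which divides the full product $d_{1} \cdots d_{n} = c \det A$ for some $c \in K^{\times}$. This contradicts the squarefreeness of $\det A$, so $\mathrm{rank}\, A(t_{x}) \ge n - 1$.

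I do not anticipate any real difficulty, but one subtlety is worth flagging. Although \autoref{Lemma:A-evs} exhibits the eigenvalues of $A$ explicitly, using it to bound the number of zero eigenvalues only controls the algebraic multiplicity of $0$ in the characteristic polynomial of $A(t_{x})$. This can strictly exceed the nullity when the formal eigenvectors $v_{k}$ degenerate under specialization; for instance, when $t_{x} = 0$ and some $\alpha_{i} = 0$, every $v_{k}$ collapses to $e_{1}$ and all $n$ eigenvalues vanish even though $A(0)$ still has rank $n - 1$. The Smith normal form argument avoids this trap by working with invariant factors rather than eigenvalues.
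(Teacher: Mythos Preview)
Your argument is correct and is genuinely different from the paper's. The paper splits into two cases: for $t_x \neq 0$ it specializes the eigenvalues $\lambda_k$ from \autoref{Lemma:A-evs} and argues directly that at most one can vanish (using that $T(t_x) \neq 0$ and the $\alpha_i$ are distinct), while for $t_x = 0$ it abandons the eigenvalue picture entirely and instead reads off the rank from the explicit upper-triangular shape of $A(0)$, using that $s_d$ and $s_{d-1}$ cannot both vanish. Your Smith normal form argument replaces both cases with a single structural observation: once you know $\det A$ is squarefree (\autoref{Lemma:A,M-det}), the divisibility chain among the invariant factors forces at most one of them to vanish at any given point. This is cleaner and more conceptual, and it isolates exactly the ingredient that matters (squarefreeness of $\det A$) rather than the finer eigenstructure. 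The price you pay is invoking the Smith normal form, whereas the paper's proof is entirely elementary linear algebra over $K$; but that is a small price, and your approach would also generalize more readily if one wanted analogous rank bounds for other matrices with squarefree determinant. Your closing remark correctly diagnoses why the paper cannot simply run the eigenvalue argument uniformly: at $t_x = 0$ with some $\alpha_i = 0$, all $\lambda_k(0)$ collapse to $s_d = 0$ and all $v_k$ collapse to $e_1$, so the eigenvalue count gives the useless bound $\operatorname{nullity} \le n$, which is exactly why the paper needs its separate Case~B.
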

\begin{proof}
The eigenvalues of $A$ were computed in \autoref{Lemma:A-evs}. Define
$T(t_x) \in K$ to be an $n$th root of $-(-1)^n t_x$ and define $\lambda_k(t_x)
\colonequals \prod_{i = 1}^d (r_i + \zeta^k T(t_x))$. Then the eigenvalues of
$A(t_x)$ are $\lambda_1(t_x), \cdots, \lambda_n(t_x)$.

\begin{enumerate}[label=\textbf{Case~\Alph*:}, ref={Case~\Alph*},
leftmargin=*, itemindent=25pt]
\item \label{Case:ArkBdtxnot0}
$t_x \neq 0$

Suppose that $\lambda_{k}(t_x) = \lambda_{\ell}(t_x) = 0$. Then there exist $i,
j$ such that $T(t_x) = - \zeta^{-k} r_{i}$ and $T(t_x) = - \zeta^{-\ell} r_{j}$.
Hence $\alpha_i = r_i^n = (- T(t_x))^{n} = r_j^n = \alpha_j$, so $i = j$.  Then
$\zeta^{k} = - r_{i} T(t_x)^{-1} = - r_{j} T(t_x)^{-1} = \zeta^{\ell}$, so $k =
\ell$. Hence $\lambda_{k}(t_x) = 0$ for at most one $k$, so the rank of $A(t_x)$
is at least $n - 1$.

\item \label{Case:ArkBdtx0}
$t_x = 0$

Since $A_{\ell} \equiv s_{\ell} \pmod{x}$ for all $\ell$ and $s_{\ell} = 0$ when
$\ell \not\in [0, d]$, we see that $A(0)$ is an upper triangular matrix with
diagonal entries $s_d$ and ``super-diagonal'' entries $s_{d - 1}$. If $s_d \neq
0$, then $A(0)$ is invertible and we are done.  If $s_{d} = 0$ and $s_{d - 1}
\neq 0$, then the submatrix obtained by deleting the first column and last row
of $A(0)$ is upper-triangular with diagonal entries $s_{d - 1}$ and is therefore
invertible, implying that the rank of $A(0)$ is at least $n - 1$.

If $s_d = s_{d - 1} = 0$, then at least two of the $\{ \alpha_1, \cdots,
\alpha_d \}$ are zero, which is impossible.  
\end{enumerate}
\end{proof}

\begin{mylemma} \label{Lemma:M-rk-Bound}
For any $t_x, t_y \in K$, the matrix $N(t_x, t_y)$ is not zero.
\end{mylemma}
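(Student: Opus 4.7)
The plan is to argue by contradiction. Suppose $N(t_x, t_y) = 0$. Since $M \cdot N = (\det M) I_n$, this forces $(\det M)(t_x, t_y) = 0$, so by \autoref{Lemma:A,M-det} the pair $(t_x, t_y)$ satisfies $t_y^n = \prod_i (t_x + \alpha_i)$. In the easy subcase $t_y = 0$, we have $M(t_x, 0) = A(t_x)$, so $N(t_x, 0) = \adj A(t_x)$; its vanishing means every $(n - 1) \times (n - 1)$ minor of $A(t_x)$ is zero, i.e.\ $\mathrm{rank}\, A(t_x) \le n - 2$, which contradicts \autoref{Lemma:A-rk-Bound}.

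The main subcase is $t_y \ne 0$. Here I would iterate \eqref{Equation:N-PlusOne} along the diagonal $i = j$: since $\delta_{i, n} - \delta_{i, n} = 0$, the correction factor is trivial, so
\[
N_{k, k}(x, y) = (\sigma^{k - 1} N_{1, 1})(x, y) = N_{1, 1}(x, \zeta^{-(k - 1)} y) \quad \text{for } 1 \le k \le n.
\]
Evaluating at $(x, y) = (t_x, t_y)$ and using $N(t_x, t_y) = 0$ forces $N_{1, 1}(t_x, \zeta^{-(k - 1)} t_y) = 0$ for $k = 1, \ldots, n$, producing $n$ distinct roots $t_y, \zeta^{-1} t_y, \ldots, \zeta^{-(n - 1)} t_y$ of the univariate polynomial $N_{1, 1}(t_x, y) \in K[y]$. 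But $N_{1, 1} = \det M^{(1, 1)}$ has degree at most $n - 1$ in $y$ (only the diagonal entries $A_d - \zeta^{-i} y$ of $M^{(1, 1)}$ depend on $y$, and each linearly), and its $y^{n - 1}$-coefficient is $\prod_{i = 1}^{n - 1} (-\zeta^{-i}) = (-1)^{n - 1} \zeta^{-n(n - 1)/2}$, a nonzero constant. Hence $N_{1, 1}(t_x, y)$ has degree exactly $n - 1$, contradicting the existence of $n$ distinct roots.

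The conceptual step is recognizing that the diagonal iteration of \eqref{Equation:N-PlusOne} pins the $n$ diagonal entries of $N(t_x, t_y)$ to values of the single polynomial $N_{1, 1}(t_x, \cdot)$ at $n$ distinct points, which is impossible since that polynomial has degree $n - 1$. Everything else is routine: a direct invocation of \autoref{Lemma:A-rk-Bound} in the case $t_y = 0$, and a one-line leading-coefficient computation in the case $t_y \neq 0$.
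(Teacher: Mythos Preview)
Your proof is correct and takes a genuinely different route from the paper's. The paper splits on whether $t_x = 0$: for $t_x \neq 0$ it uses the full relation \eqref{Equation:N-PlusOne} (all entries, not just diagonal) together with the averaging identity $N + \sigma N + \cdots + \sigma^{n-1} N = n \adj A$ to conclude $\adj A(t_x) = 0$, contradicting \autoref{Lemma:A-rk-Bound}; for $t_x = 0$ it argues directly that $M(0, t_y)$ is upper-triangular with at most one vanishing diagonal entry. You instead split on $t_y$, and in the main case $t_y \neq 0$ you observe that restricting \eqref{Equation:N-PlusOne} to the diagonal kills the $x$-power correction entirely, so the vanishing of the $n$ diagonal entries of $N(t_x, t_y)$ forces the degree-$(n-1)$ polynomial $N_{1,1}(t_x, \cdot)$ to have $n$ distinct roots. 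Your argument is somewhat more elementary: it avoids the trace identity, handles all $t_x$ uniformly in the $t_y \neq 0$ case, and your $t_y = 0$ case is an immediate specialization to \autoref{Lemma:A-rk-Bound}. The paper's averaging argument, on the other hand, is more conceptual and would generalize to situations where a single entry like $N_{1,1}$ is harder to control. (Your opening remark that $\det M(t_x, t_y) = 0$ is correct but unused in either of your subcases; you could drop it.)
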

\begin{proof}
We will use the following fact: for each square matrix $F$, the rank of $F$ is
at most $n - 2$ if and only if $\adj F = 0$.

Consider the matrix $N + \sigma N + \cdots + \sigma^{n - 1} N$; it is
$\sigma$-invariant and it involves powers of $y$ only between $0$ and $n - 1$,
so it is independent of $y$. Hence 
\begin{equation}
\label{Equation:TraceOfN}
(N + \sigma N + \cdots + \sigma^{n - 1} N)(x, y) = (N + \sigma N + \cdots +
\sigma^{n - 1} N)(x, 0) = n N(x, 0) = n \adj A(x).
\end{equation}

\begin{enumerate}[label=\textbf{Case~\Alph*:}, ref={Case~\Alph*},
leftmargin=*, itemindent=25pt]
\item \label{Case:MrkBdtxnot0}
$t_x \neq 0$

If $N(t_x, t_y) = 0$, then \eqref{Equation:N-PlusOne} implies that $(\sigma^i
N)(t_x, t_y) = 0$ for all $i$. Substituting this into \eqref{Equation:TraceOfN}
yields
\[
0 = (N + \sigma N + \cdots + \sigma^{n - 1} N)(t_x, t_y) = n \adj A(t_x).
\]
Since $\characteristic(K) \nmid n$, we may divide by $n$ on both sides to see
that $\adj A(t_{x}) = 0$, so $A(t_x)$ has rank at most $n - 2$, contradicting 
\autoref{Lemma:A-rk-Bound}.

\item \label{Case:MrkBdtx0}
$t_x = 0$

Then the matrix $M(0, t_y) = A(0) - t_y Z$ is upper triangular with diagonal
entries $s_d - t_y \zeta^{i}$. If $t_y \neq 0$, then these diagonal entries will
all be distinct; in particular, at most one is zero, so $M(0, t_y)$ will have
rank at least $n - 1$. If $t_y = 0$, then $M(0, t_y) = A(0)$ and we are done by
\autoref{Lemma:A-rk-Bound}. 
\end{enumerate}
\end{proof}

\subsection{Main proof}
\label{Subsection:DivisionMainProof}

\subsubsection{Vanishing loci of $N_{i, j}$}
We will now view entries of $N$ as elements of the function field
$K(\mathcal{C})$ when writing expressions of the form $\divisor N_{i, j}$ or
$\divisor_0 N_{i, j}$. In order to make sense of such expressions, we need to
check that $N_{i, j}$ reduces to a \textit{nonzero} element of $K(\mathcal{C})$.

\begin{mylemma} \label{Lemma:NegVInfinities} \hfill
\begin{enumerate}[label=\upshape(\arabic*),
ref=\autoref{Lemma:NegVInfinities}(\arabic*)]

\item \label{Lemma:NegVInfinitiesx} 
$-v_{\infty}(x) = n$

\item \label{Lemma:NegVInfinitiesy} 
$-v_{\infty}(y) = d$

\item \label{Lemma:NegVInfinitiesAell} 
For $\ell \ge 0$, 
\begin{equation}
\label{Equation:AellValInfinity}
-v_{\infty}(A_{\ell}) \le \ell,
\end{equation}
with equality holding if and only if $\ell \equiv 0 \pmod{n}$.

\item \label{Lemma:NegVInfinitiesMuv} 
For $1 \le u, v \le n$,
\begin{equation}
\label{Equation:MijValInfinity}
-v_{\infty}(M_{u, v}) \le d + u - v,
\end{equation}
with equality holding if and only if $u = v$ or $u - v \equiv -d \pmod{n}$.

\end{enumerate}
\end{mylemma}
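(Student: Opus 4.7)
The plan is to work through the four parts in order, using the fact that $\mathcal{C}$ has a unique point at infinity where $x$ and $y$ have easily computed pole orders, and then observing that the monomials in $A_\ell$ have pairwise distinct valuations at $\infty$ so no cancellation can occur.

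For \autoref{Lemma:NegVInfinitiesx}, since $\gcd(n,d)=1$ the degree-$n$ map $x\colon\mathcal{C}\to\mathbf{P}^1$ has a unique point above $\infty\in\mathbf{P}^1$, which must be totally ramified; this gives $-v_\infty(x)=n$. For \autoref{Lemma:NegVInfinitiesy}, I would simply apply $v_\infty$ to both sides of the defining equation \eqref{Equation:EquationOfC} to get $n\,v_\infty(y)=\sum_{i=1}^d v_\infty(x+\alpha_i)=d\,v_\infty(x)=-nd$, so $-v_\infty(y)=d$.

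For \autoref{Lemma:NegVInfinitiesAell}, the key observation is that each monomial $s_{\ell-nk}x^k$ in $A_\ell(x)=\sum_{k\ge 0}(-1)^{(n-1)k}s_{\ell-nk}x^k$ has $-v_\infty$-value equal to $nk$, and these values are pairwise distinct as $k$ varies. Hence $-v_\infty(A_\ell)$ is exactly $n\cdot k^\ast$ where $k^\ast$ is the largest $k$ with $s_{\ell-nk}\ne 0$. Because $s_m=0$ for $m<0$, we need $nk\le\ell$, giving the bound $-v_\infty(A_\ell)\le\ell$. Equality forces $nk^\ast=\ell$, so $n\mid\ell$ and the contributing term has coefficient $s_0=1\ne0$; conversely, if $n\nmid\ell$ then no term realizes $nk=\ell$, so strict inequality holds.

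For \autoref{Lemma:NegVInfinitiesMuv}, I would use the decomposition $M_{u,v}=A_{d+u-v}-\delta_{u,v}\zeta^{1-u}y$ noted in the proof of \autoref{Lemma:N-Relations}. When $u\ne v$, the second term vanishes and part (3) immediately gives $-v_\infty(M_{u,v})=-v_\infty(A_{d+u-v})\le d+u-v$ with equality iff $n\mid d+u-v$, i.e.\ $u-v\equiv -d\pmod{n}$. When $u=v$, we have $M_{u,u}=A_d-\zeta^{1-u}y$; since $\gcd(n,d)=1$ and $n\ge 2$, we have $n\nmid d$, so part (3) yields $-v_\infty(A_d)<d$, while $-v_\infty(\zeta^{1-u}y)=d$ by part (2). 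These two valuations differ, so the valuation of the difference equals the smaller one: $-v_\infty(M_{u,u})=d=d+u-v$, achieving equality. The only delicate step is the no-cancellation observation in part (3), which cleanly reduces the valuation of a polynomial in $x$ to a combinatorial condition on $\ell$ modulo $n$, and which is then invoked crucially in part (4) to separate the diagonal $u=v$ case from the off-diagonal cases.
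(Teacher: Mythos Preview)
Your proposal is correct and follows essentially the same approach as the paper. The paper dismisses parts~(1) and~(2) in one line as consequences of~\eqref{Equation:EquationOfC}, while you spell out the ramification argument a bit more; for part~(3) the paper phrases the bound via $\deg_x A_\ell \le \lfloor \ell/n\rfloor$ rather than your ``no cancellation'' phrasing, but these amount to the same observation, and part~(4) is handled identically in both by splitting into the cases $u=v$ and $u\neq v$.
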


\begin{proof}

\autoref{Lemma:NegVInfinitiesx} and \autoref{Lemma:NegVInfinitiesy} follow
directly from \eqref{Equation:EquationOfC}, the equation of $\mathcal{C}$.

\begin{enumerate}[label=\upshape(\arabic*),
ref={the proof of \autoref{Lemma:NegVInfinities}(\arabic*)}]
\setcounter{enumi}{2}

\item 
\label{LemmaProof:NegVInfinitiesAell}
Since 
\[
A_{\ell} = \sum_{k \ge 0} (-1)^{(n - 1) k} s_{\ell - n k} x^{k}
\]
and $s_{\ell - n k} = 0$ whenever $\ell - n k \not\in [0, d]$, 
\[
\deg_{x} A_{\ell} \le \lfloor \ell / n \rfloor,
\]
so by \autoref{Lemma:NegVInfinitiesx},
\[
-v_{\infty}(A_{\ell}) \le n \lfloor \ell / n \rfloor.
\]
Since $n \lfloor \ell / n \rfloor \le \ell$, we obtain
\eqref{Equation:AellValInfinity}. If $\ell \not \equiv 0 \pmod{n}$, then $n
\lfloor \ell / n \rfloor < \ell$, so the inequality must be strict. If $\ell
\equiv 0 \pmod{n}$, then the $x^{\ell / n}$-coefficient of $A_{\ell}$ is
$(-1)^{(n - 1) \ell / n} s_{0} = (-1)^{(n - 1) \ell / n} \neq 0$ and hence
$-v_{\infty}(A_{\ell}) = n(\ell / n) = \ell$.

\item
\label{LemmaProof:NegVInfinitiesMuv}
Since 
\[
M_{u, v} = A_{d + u - v} - \zeta^{1 - u} \delta_{u, v} y,
\]
\eqref{Equation:MijValInfinity} follows by breaking into cases depending on
whether or not $u = v$ and then applying \autoref{Lemma:NegVInfinitiesy} and
\autoref{Lemma:NegVInfinitiesAell}.  If $u \neq v$, then $M_{u, v} = A_{d + u -
v}$, so \autoref{Lemma:NegVInfinitiesAell} gives that equality holds in
\eqref{Equation:MijValInfinity} if and only if $u - v \equiv -d \pmod{n}$.  If
$u = v$, then equality holds in \eqref{Equation:MijValInfinity} because
$-v_{\infty}(A_{d}) < d$ (since $d \not\equiv 0 \pmod{n}$) and $-v_{\infty}(y) =
d$. 

\end{enumerate}
\end{proof}

\begin{mylemma} \label{Lemma:N-P} \hfill
\begin{enumerate}[label=\upshape(\arabic*), ref=\autoref{Lemma:N-P}(\arabic*)]

\item \label{Lemma:N-P-NegValInftyNij}
$-v_{\infty}( N_{i, j}) = 2 g + (i - 1) + (n - j)$. In particular, $N_{i,
j} \neq 0$.

\item \label{Lemma:N-P-div0Nij}
Each $N_{i, j}$ satisfies
\[
\divisor_{0} N_{i, j} \ge \sum_{k = j - n}^{i - 2} \zeta^{k} P
\]

\end{enumerate}
\end{mylemma}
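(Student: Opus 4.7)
The plan is to prove both parts together by reducing to the case $i = 1$ and propagating via the recursion \eqref{Equation:N-PlusOne} from \autoref{Lemma:N-Relations}.

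For part~(1), I would first derive the upper bound. Writing $N_{i,j} = (-1)^{i+j}\det M^{(j,i)}$ and applying \autoref{Lemma:NegVInfinitiesMuv} termwise to each contribution $\prod_u M_{u,\pi(u)}$ in the cofactor expansion yields
\[
-v_\infty(N_{i,j}) \le \max_\pi \sum_{u \in [n]\setminus\{j\}} (d + u - \pi(u)) = (n-1)d + (i-j) = 2g + (i-1) + (n-j).
\]
For equality (which also yields $N_{i,j} \neq 0$), I would note that the recursion shifts $-v_\infty$ by exactly $n(\delta_{j,n} - \delta_{i,n})$, since $\sigma$ fixes $\infty$ and $-v_\infty(x) = n$; this is consistent with the claimed formula, so it suffices to verify equality on one representative of each diagonal class $j - i \pmod{n}$, e.g.\ at $(1, j)$ for each $j \in [1, n]$. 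There one identifies the ``identity-or-shift-by-$d$'' matchings $\pi\colon [n]\setminus\{j\} \to [2, n]$ that contribute to the leading term and checks that the total leading coefficient is a nonzero element of $K$.

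For part~(2), the case $i = 1$ is essentially \autoref{Lemma:QijMakesSense}: at each $\zeta^k P = (0, \zeta^k s_d)$ for $k \in [j, n-1]$, both $x$ and $y - \zeta^k s_d$ are uniformizers (the map $x$ is unramified at $\zeta^k P$ since $s_d \ne 0$), so $N_{1, j}$, lying in $\bigl(x, \prod_{k'=j}^{n-1}(y - \zeta^{k'} s_d)\bigr)$, vanishes to order at least $1$ at each such point, giving $\divisor_0 N_{1, j} \ge \sum_{k=j-n}^{-1} \zeta^k P$. For the inductive step, I would iterate the recursion along the walk $(1, j') \to (2, j'+1) \to \cdots \to (i, j)$ with $j' \in [1, n]$ and $j' \equiv j - i + 1 \pmod{n}$. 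Let $E_{i,j}$ denote the accumulated $x$-exponent, equal to the number of column wraps from $n$ to $1$ during the walk ($E_{i,j} = 0$ if $j \ge i$, and $E_{i,j} = 1$ if $j < i$). Using $\divisor(\sigma f) = \zeta \cdot \divisor(f)$ and $\divisor_0 x = \sum_{k=0}^{n-1} \zeta^k P$, this gives
\[
\divisor_0 N_{i,j} \ge E_{i,j} \sum_{k=0}^{n-1}\zeta^k P + \zeta^{i-1} \divisor_0 N_{1, j'}.
\]
In the no-wrap case a direct index shift yields the claim; in the wrap case a short residue-by-residue count shows the two contributions combine to exactly $\sum_{k=j-n}^{i-2}\zeta^k P$ (residues in $[0, j-1] \cup [i-1, n-1]$ with multiplicity $1$, and $[j, i-2]$ with multiplicity $2$).

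The main obstacle is the non-cancellation of leading coefficients in part~(1): for most diagonal classes several permutations achieve the bound, and one must exploit the cyclic combinatorics on $\mathbf{Z}/n$ together with $\gcd(n,d) = 1$ to confirm that their leading-coefficient contributions do not sum to zero.
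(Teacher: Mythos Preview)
Your approach is close to the paper's, but there is a misconception in part~(1). You write that ``for most diagonal classes several permutations achieve the bound'' and flag non-cancellation of their leading coefficients as the main obstacle. In fact, \emph{exactly one} permutation achieves the bound, so there is nothing to cancel. The paper proves this directly (without first reducing to $i=1$ via the recursion): if $\sigma \in S_n$ satisfies $\sigma(j) = i$ and $\sigma(m) \in \{m, m+d\} \pmod n$ for all $m \ne j$, then $\sigma$ is uniquely determined. To see this, parameterize residues modulo $n$ as $i, i+d, i+2d, \ldots$ (a full cycle since $\gcd(n,d)=1$) and let $u \in [0,n-1]$ satisfy $j \equiv i + ud$. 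Since $i$ is not in the codomain, $\sigma(i)$ is forced to be $i+d$; then $i+d$ is already hit, so $\sigma(i+d) = i+2d$; and so on up through $\sigma(i+(u-1)d) = i+ud = j$. From the other end, since $i$ is not in the codomain, $\sigma(i-d)$ must be $i-d$; then $\sigma(i-2d) = i-2d$; and so on. This pins down $\sigma$ completely, and the leading term of $\det M^{(j,i)}$ is then a single nonzero product of $M_{u,v}$'s each attaining equality in \eqref{Equation:MijValInfinity}. Your reduction via \eqref{Equation:N-PlusOne} is correct but unnecessary; once you attempt to enumerate the maximizing matchings at $(1,j)$ you would discover this uniqueness anyway.

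For part~(2), your argument matches the paper's, but your parenthetical assumption ``$s_d \ne 0$'' need not hold (it fails precisely when $P$ is one of the $(-\alpha_i, 0)$). The paper avoids any case split by working at the level of divisors: from $N_{1,j} \in \bigl(x,\, \prod_{k=j}^{n-1}(y - \zeta^k s_d)\bigr)$ one deduces
\[
\divisor_0 N_{1,j} \;\ge\; \gcd\Bigl(\divisor_0 x,\; \divisor_0 \textstyle\prod_{k=j-n}^{-1} (y - \zeta^k s_d)\Bigr) \;\ge\; \sum_{k=j-n}^{-1} \zeta^k P,
\]
which is valid whether or not the $\zeta^k P$ are distinct.
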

\begin{proof}\hfill
\begin{enumerate}[label=\upshape(\arabic*), ref={the proof of
\autoref{Lemma:N-P}(\arabic*)}]

\item \label{LemmaProof:N-P-NegValInftyNij}
For every integer $k$, let $L(k \infty)$ be the subspace of
$K(\mathcal{C})$ consisting of meromorphic functions that are holomorphic
everywhere except at $\infty$ and whose valuation at $\infty$ is at least $-k$.
Define $\ell \colonequals 2 g + (i - 1) + (n - j)$. 

Label the rows of $M^{(j, i)}$ by $\{ 1, 2, \cdots, j - 1, j + 1,
\cdots, n \}$ and the columns by $\{ 1, 2, \cdots, i - 1, i + 1, \cdots, n \}$.
We remind the reader that row and column indices are taken modulo $n$.

Expand $\det M^{(j, i)}$ as a sum over permutations
\[
\det M^{(j, i)} = \sum_{\substack{\sigma \in S_{n} \\ \sigma(j) =
i}} \sign(\sigma) M_{1, \sigma(1)} \cdots M_{j - 1, \sigma(j - 1)} M_{j + 1,
\sigma(j + 1)} \cdots M_{n, \sigma(n)}.
\]
For every $\sigma \in S_n$ satisfying $\sigma(j) = i$, apply
\eqref{Equation:MijValInfinity} to the summand corresponding to $\sigma$ to get
\begin{align*}
&-v_{\infty}(\sign(\sigma) M_{1, \sigma(1)} \cdots M_{j - 1, \sigma(j - 1)} M_{j
+ 1, \sigma(j + 1)} \cdots M_{n, \sigma(n)}) \\
&= \sum_{\substack{1 \le k \le n \\ k \neq j}} -v_{\infty}(M_{k, \sigma(k)}) \\
&\le \sum_{\substack{1 \le k \le n \\ k \neq j}} (d + k - \sigma(k)) \\
&= -(d + j - i) + \sum_{k = 1}^{n} d + (k - \sigma(k)) \\
&= -(d + j - i) + n d \\
&= \ell
\end{align*}
and hence
\[
-v_{\infty}(\det M^{(j, i)}) \le \ell. 
\]
Furthermore, $\det M^{(j, i)} \pmod{L((\ell - 1) \infty)}$ will be unchanged if
we replace the $(u, v)$-entry of $M$ with zero whenever we do not have equality
in \eqref{Equation:MijValInfinity}. That is, the $n \times n$ matrix
$\widetilde{M}$ defined by
\[
\widetilde{M}_{u, v} = \begin{cases}
M_{u, v} &\text{if } u - v \in \{ 0, -d \} \pmod{n}, \\
0 &\text{otherwise}
\end{cases}
\]
satisfies
\begin{equation}
\label{Equation:PasstoMtilde}
\det M^{(j, i)} \equiv \det \widetilde{M}^{(j, i)} \pmod{L((\ell - 1)\infty)}.
\end{equation}

\textbf{Claim.} Let $u \in [0, n - 1]$ be the unique integer such that $j \equiv
i + u d \pmod{n}$. Then 
\begin{align}
\det \widetilde{M}^{(j, i)} = \pm &M_{i, i + d} \cdots M_{i + (u - 1) d, i + u
d} \nonumber \\
\times &M_{i + (u + 1) d, i + (u + 1) d} \cdots M_{i + (n - 1) d, i + (n - 1)
d} \label{Equation:tildeMdet}
\end{align}

\textbf{Proof of claim.} Write
\begin{equation}
\label{Equation:ExpandtildeMji}
\det \widetilde{M}^{(j, i)} = \sum_{\substack{\sigma \in S_{n} \\ \sigma(j) =
i}} \sign(\sigma) \widetilde{M}_{1, \sigma(1)} \cdots \widetilde{M}_{j - 1,
\sigma(j - 1)} \widetilde{M}_{j + 1, \sigma(j + 1)} \cdots \widetilde{M}_{n,
\sigma(n)}.
\end{equation}
Suppose that $\sigma \in S_{n}$ satisfies $\sigma(j) = i$ and $\sigma(m) \in \{
m, m + d \} \pmod{n}$ for every $m \in [1, n] \setminus \{ j \}$; otherwise, the
summand corresponding to $\sigma$ in \eqref{Equation:ExpandtildeMji} is zero.
Then:

\begin{enumerate}[label=\upshape(\roman*)]
\item $\sigma(i + k d) = i + (k + 1) d$ for $k \in [0, u - 1]$.

Induct on $k$. If $k = 0$, then $u \neq 0$ and hence $i \neq j =
\sigma^{-1}(i)$, so $\sigma(i) \neq i$. Since $\sigma(i) \in \{ i, i + d \}$,
this forces $\sigma(i) = i + d$. Now suppose that $\sigma(i + k d) = i + (k + 1)
d$ for some $k \in [0, u - 2]$. Then $i + (k + 1) d \neq i + k d = \sigma^{-1}(i
+ (k + 1) d)$, so $\sigma(i + (k + 1) d) \neq i + (k + 1) d$. Since $\sigma(i +
(k + 1) d) \in \{ i + (k + 1) d, i + (k + 2) d \}$, this forces $\sigma(i + (k +
1) d) = i + (k + 2) d$.

\item  $\sigma(i - k d) = i - k d$ for $k \in [1, n - u - 1]$.

Induct on $k$. If $k = 1$, then $u \neq n - 1$ and hence $i - d \neq j =
\sigma^{-1}(i)$, so $\sigma(i - d) \neq i$. Since $\sigma(i - d) \in \{i - d, i
\}$, this forces $\sigma(i - d) = i - d$. Now suppose that $\sigma(i - k d) = i
- k d$ for some $k \in [1, n - u - 2]$. Then $i - (k + 1) d \neq i - k d =
\sigma^{-1}(i - k d)$, so $\sigma(i - (k + 1) d) \neq i - k d$. Since $\sigma(i
- (k + 1) d) \in \{ i - (k + 1) d, i - k d \}$, this forces $\sigma(i - (k + 1)
d) = i - (k + 1) d.$
\end{enumerate}

Properties (i) and (ii) uniquely determine $\sigma$, so \textbf{the proof of the
claim is complete.}

We attain the upper bound in \eqref{Equation:MijValInfinity} for
every term on the right hand side of \eqref{Equation:tildeMdet}, so applying
$-v_{\infty}$ to both sides of \eqref{Equation:tildeMdet} yields
\begin{equation}
\label{Equation:Mtildeval}
-v_{\infty}(\det \widetilde{M}^{(j, i)}) = \ell.
\end{equation}
Combining \eqref{Equation:PasstoMtilde} and \eqref{Equation:Mtildeval}, we
conclude that $-v_{\infty}(\det M^{(j, i)}) = \ell$. Since
$N_{i, j} = (-1)^{i + j} \det M^{(j, i)}$, we are done.

\item \label{LemmaProof:N-P-div0Nij}
Use \eqref{Equation:N-PlusOne} to reduce to the case $i = 1$.
\autoref{Lemma:N-P-NegValInftyNij} implies that  $N_{1, j}$ is not identically
zero, so applying $\divisor_0$ to \autoref{Lemma:QijMakesSense} (which makes
sense since polynomials in $x, y$ can only have poles at $\infty$) yields
\begin{align*}
\divisor_0 N_{1, j} &\ge \gcd\left\{ \divisor_0 x, \divisor_0 \prod_{k = j -
n}^{-1} (y - \zeta^{k} s_{d}) \right\} \\
&\ge \sum_{k = j - n}^{-1} \zeta^k P. 
\end{align*}
\end{enumerate}
\end{proof}

\begin{mydefinition}
\label{Def:QDE}
Define 
\begin{align*}
Q_{i, j} &\colonequals \divisor_{0} N_{i, j} - \sum_{k = j - n}^{i - 2}
\zeta^{k} P \\
D_{i} &\colonequals \gcd_{1 \le k \le n} Q_{i, k} \\
E_{j} &\colonequals Q_{1, j} - \gcd_{1 \le k \le n} Q_{1, k}
\end{align*}
By \autoref{Lemma:N-P-div0Nij}, $Q_{i, j} \ge 0$, so $D_{i} \ge 0$. Also, $E_{j}
\ge 0$.
\end{mydefinition}
Our first task is to translate the lemmas in the previous section to results
about the effective divisors $Q_{i, j}, D_{i}, E_{j}$.
\begin{mylemma} \label{Lemma:DiEj-Construction}
The effective divisors $D_{i}$, $E_{j}$ satisfy 
\[
D_{i} + E_{j} = Q_{i, j}.
\]
\end{mylemma}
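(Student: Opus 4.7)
The strategy is to reduce $D_i + E_j = Q_{i,j}$ to the single claim that $Q_{i,j} - Q_{1,j}$ depends only on $i$. Once this is known, denoting the common value by $C_i$ and applying $\gcd_j$ to the identity $Q_{i,j} = Q_{1,j} + C_i$ gives $D_i = D_1 + C_i$, since pointwise
$v_P(\gcd_j(X_j + Y)) = v_P(Y) + \min_j v_P(X_j)$ for any divisor $Y$. Substituting back yields
\[
Q_{i,j} = Q_{1,j} + C_i = (Q_{1,j} - D_1) + D_i = E_j + D_i.
\]

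The essential structural input will be that, viewed as a matrix over the function field $K(\mathcal{C})$, $N$ has rank exactly one. For the upper bound, \autoref{Lemma:N-rk-bound} shows that every $2 \times 2$ minor of $N$ is divisible by $y^{n} - \prod_{i=1}^{d}(x + \alpha_i)$, which vanishes identically on $\mathcal{C}$. For the lower bound, \autoref{Lemma:N-P-NegValInftyNij} guarantees that every entry $N_{i,j}$ is nonzero in $K(\mathcal{C})$. Consequently one can factor $N_{i,j} = f_i g_j$ with $f_i, g_j \in K(\mathcal{C})^{*}$ (for instance, $g_j = N_{1,j}$ and $f_i = N_{i,1}/N_{1,1}$, which works by the vanishing of the $\{1,i\}\times\{1,j\}$ minor), and then
\[
\divisor N_{i,j} - \divisor N_{1,j} = \divisor(f_i/f_1)
\]
is manifestly independent of $j$.

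To pass from $\divisor$ to $\divisor_0$, note that each $N_{i,j}$ is a polynomial in $x, y$, so its only pole is at $\infty$, with order given by \autoref{Lemma:N-P-NegValInftyNij}. Therefore
\[
\divisor_0 N_{i,j} - \divisor_0 N_{1,j} = \bigl(\divisor N_{i,j} - \divisor N_{1,j}\bigr) + (i-1)\infty,
\]
which is still independent of $j$. Absorbing the $\zeta^{k}P$ correction terms from the definition of $Q_{i,j}$ finally transfers this $j$-independence to $Q_{i,j} - Q_{1,j}$, completing the reduction carried out in the first paragraph. I do not anticipate any serious obstacle: the only step requiring real care is the rank-one factorization over $K(\mathcal{C})$, and everything else is routine bookkeeping with divisors and their effective parts.
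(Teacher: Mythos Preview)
Your proposal is correct and follows essentially the same route as the paper: the paper derives the cross-relation $Q_{i,j} + Q_{k,\ell} = Q_{i,\ell} + Q_{k,j}$ directly from \autoref{Lemma:N-rk-bound} (your rank-one factorization is just a repackaging of this identity), and then uses translation-compatibility of $\gcd$ together with the definitions of $D_i$ and $E_j$ exactly as you do. The only cosmetic difference is that the paper keeps the argument at the level of the $Q_{i,j}$ throughout rather than passing through an explicit factorization $N_{i,j} = f_i g_j$.
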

\begin{proof}
Apply \autoref{Lemma:N-rk-bound} to the $2 \times 2$ submatrix of $N$ obtained
by taking rows $\{ i, k \}$ and columns $\{ j, \ell \}$ to obtain the equality
$N_{i, j} N_{k, \ell} = N_{i, \ell} N_{k, j}$ \textit{as elements of}
$K(\mathcal{C})$. Since the entries of $N$ have poles only at $\infty$, we may
take $\divisor_0$ of both sides to obtain $\divisor_0 N_{i, j} + \divisor_0
N_{k, \ell} = \divisor_0 N_{i, \ell} + \divisor_0 N_{k, j}$. Therefore,
\begin{equation}
\label{Equation:Q-Cross-Relations}
Q_{i, j} + Q_{k, \ell} = Q_{i, \ell} + Q_{k, j},
\end{equation}
and hence
\begin{align*}
D_{i} + E_{j} &= \left(\gcd_{1 \le k \le n} Q_{i, k}\right) + Q_{1, j} - \gcd_{1
\le k \le n} Q_{1, k} \\ 
&= \left( \gcd_{1 \le k \le n} (Q_{i, 1} - Q_{1, 1} + Q_{1, k})    \right) +
Q_{1, j} - \gcd_{1 \le k \le n} Q_{1, k}
&&\text{(by \eqref{Equation:Q-Cross-Relations})} \\
&= (Q_{i, 1} - Q_{1, 1}) + \left( \gcd_{1 \le k \le n} Q_{1, k} \right) + Q_{1,
j} - \gcd_{1 \le k \le n} Q_{1, k} \\
&= Q_{i, 1} - Q_{1, 1} + Q_{1, j} \\
&= Q_{i, j} &&\text{(by \eqref{Equation:Q-Cross-Relations}).} 
\end{align*}
\end{proof}

\begin{mylemma} \label{Lemma:gcd-DiEj-Effective}
We have
\[
\gcd_{1 \le i \le n} D_{i} = \gcd_{1 \le j \le n} E_{j} = 0.
\]
\end{mylemma}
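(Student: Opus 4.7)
The second identity $\gcd_{j} E_{j} = 0$ is essentially immediate from the definition. Since $E_{j} = Q_{1, j} - D_{1}$ where $D_{1} = \gcd_{k} Q_{1, k}$ is a fixed divisor, the gcd distributes over the subtraction: at each point $R$ of $\mathcal{C}$,
\[
v_R\Big(\gcd_{j} E_{j}\Big) = \min_{j} \bigl(v_R(Q_{1, j}) - v_R(D_{1})\bigr) = v_R(D_1) - v_R(D_1) = 0.
\]

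For the first identity $\gcd_{i} D_{i} = 0$, the plan is to show that every point $R \in \mathcal{C}$ fails to lie in $D_{i}$ for at least one value of $i$. Since $D_{i} = \gcd_{k} Q_{i, k}$, it suffices to produce, for each $R$, some pair $(i, j)$ with $v_{R}(Q_{i, j}) = 0$. The essential input is \autoref{Lemma:M-rk-Bound}, which guarantees that $N(t_{x}, t_{y}) \neq 0$ for every $(t_{x}, t_{y}) \in K^{2}$. Given a finite point $R = (t_{x}, t_{y})$ on $\mathcal{C}$, I would pick $(i, j)$ such that $N_{i, j}(t_{x}, t_{y}) \neq 0$; then $v_{R}(N_{i, j}) = 0$, so $R$ does not appear in $\divisor_{0} N_{i, j}$. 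For $R = \infty$, \autoref{Lemma:N-P-NegValInftyNij} says $N_{i, j}$ has a pole at $\infty$ for every $(i, j)$, so $\infty$ is not in $\divisor_{0} N_{i, j}$ either; thus the same conclusion holds with any choice of $(i, j)$.

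It remains to deduce $v_{R}(Q_{i, j}) = 0$. Recall $Q_{i, j} = \divisor_{0} N_{i, j} - \sum_{k = j - n}^{i - 2} \zeta^{k} P$, and by \autoref{Lemma:N-P-div0Nij} this divisor is effective. Combining effectiveness with $v_{R}(\divisor_{0} N_{i, j}) = 0$ forces the coefficient of $R$ in the subtracted sum $\sum_{k = j - n}^{i - 2} \zeta^{k} P$ to also vanish (otherwise $Q_{i, j}$ would have a negative coefficient at $R$). Consequently $v_{R}(Q_{i, j}) = 0$, hence $v_{R}(D_{i}) = 0$, and so $R$ does not contribute to $\gcd_{i} D_{i}$.

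There is no serious obstacle here once \autoref{Lemma:M-rk-Bound} is available: the argument is a pointwise verification. The only mild subtlety is that one does not need to separately rule out $R$ being one of the rotates $\zeta^{k} P$ in the subtracted sum; this incompatibility is automatic from the previously-established effectivity $\divisor_{0} N_{i, j} \ge \sum_{k = j - n}^{i - 2} \zeta^{k} P$ together with the nonvanishing of $N_{i, j}$ at $R$.
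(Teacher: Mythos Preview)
Your argument is correct and uses the same key input as the paper (\autoref{Lemma:M-rk-Bound}), but the route is organized differently. The paper proves the two identities simultaneously: it first shows $\gcd_{i,j} Q_{i,j} = 0$ (via \autoref{Lemma:M-rk-Bound}, essentially the same pointwise argument you give), then invokes \autoref{Lemma:DiEj-Construction} to write $\gcd_{i} D_{i} + \gcd_{j} E_{j} = \gcd_{i,j}(D_{i} + E_{j}) = \gcd_{i,j} Q_{i,j} = 0$, and concludes both gcd's vanish because each is effective. Your version instead notices that $\gcd_{j} E_{j} = 0$ is a tautology from the very definition $E_{j} = Q_{1,j} - \gcd_{k} Q_{1,k}$, bypassing \autoref{Lemma:DiEj-Construction} entirely, and then handles $\gcd_{i} D_{i} = 0$ directly from $D_{i} \le Q_{i,j}$ and the nonvanishing of some $N_{i,j}$ at each point. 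Your route is a bit more elementary (it does not need the decomposition $D_{i} + E_{j} = Q_{i,j}$), while the paper's is more symmetric in $D$ and $E$.
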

\begin{proof}
If there existed a point $R$ on $\mathcal{C}$ such that $Q_{i, j} \ge R$ for all
$i, j$, then all the $N_{i, j}$ would vanish on $R$, which contradicts 
\autoref{Lemma:M-rk-Bound}. Therefore $0 \ge \gcd_{i, j} Q_{i, j}$. Since each
$Q_{i, j}$ is effective, we get the reverse inequality $\gcd_{i, j} Q_{i, j} \ge
0$. Hence
\[
\gcd_{1 \le i, j \le n} Q_{i, j} = 0.
\]
Taking $\gcd_{1 \le i, j \le n}$ of both sides of
\autoref{Lemma:DiEj-Construction} yields
\[
\gcd_{1 \le i \le n} D_{i} + \gcd_{1 \le j \le n} E_{j} = \gcd_{1 \le i, j \le
n} Q_{i, j}.
\]
Therefore $\gcd_{i} D_{i}$ and $\gcd_{j} E_{j}$ are effective divisors whose sum
is $0$; hence both are $0$.
\end{proof}
\begin{mylemma} \label{Lemma:DjD1}
For $1 \le i, j \le n$, 
\begin{align}
D_{i} &= \zeta^{i - 1} D_{1} \label{Equation:Dj-D-relationship} \\
E_{j} &= \zeta^{j - 1} E_{1}. \label{Equation:Ej-E-relationship}
\end{align}
\end{mylemma}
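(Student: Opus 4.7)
The plan is to establish the single refinement
\[
Q_{i+1, k} = \zeta \cdot Q_{i, k - 1} \qquad \text{for all } i, k \in [1, n],
\]
with indices cycled back into $[1, n]$ as usual. Both \eqref{Equation:Dj-D-relationship} and \eqref{Equation:Ej-E-relationship} will follow from this formally.

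For the identity, I start from \autoref{Lemma:N-Relations}. The ring automorphism $\sigma$ of $K[x, y]$ extends to $K(\mathcal{C})$ and there agrees with pullback along the curve automorphism $\zeta^{-1}$, so at the level of divisors it sends each prime $[R]$ to $[\zeta R]$ (an action I denote $\zeta \cdot$). Since each $N_{i, j}$ and $x$ has poles only at $\infty$, I can take $\divisor_0$ of \eqref{Equation:N-PlusOne} to get
\[
\divisor_0 N_{i + 1, j + 1} = (\delta_{j, n} - \delta_{i, n}) \divisor_0 x + \zeta \cdot \divisor_0 N_{i, j}.
\]
Subtracting the defining correction $\sum_{m} \zeta^m P$ on each side then reduces the boxed identity to a routine index calculation, whose key input is that $\divisor_0 x = \sum_{m = 0}^{n - 1} \zeta^m P = \sum_{m = c}^{c + n - 1} \zeta^m P$ for every $c \in \mathbf{Z}$: in each of the three cases (no wraparound, column wraparound $j = n$, row wraparound $i = n$) the extra $\pm \divisor_0 x$ contributes precisely the missing complete cycle of points needed to close up the two shifted sums.

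Once the identity is in hand, taking $\gcd_{1 \le k \le n}$ of both sides (noting that $k \mapsto k - 1$ is a bijection of $[1, n]$ modulo $n$) immediately gives $D_{i + 1} = \zeta D_i$, and iteration yields \eqref{Equation:Dj-D-relationship}. For \eqref{Equation:Ej-E-relationship}, I specialize the identity to $i = 1$, apply \autoref{Lemma:DiEj-Construction} to both sides of $Q_{2, k} = \zeta Q_{1, k - 1}$ to rewrite them as $D_2 + E_k$ and $\zeta D_1 + \zeta E_{k - 1}$ respectively, and cancel the $D_2 = \zeta D_1$ terms to extract $E_k = \zeta E_{k - 1}$, which iterates to the desired formula.

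The main obstacle is the case-by-case bookkeeping in the second paragraph: the exponent of $x$ in \autoref{Lemma:N-Relations} is nonzero precisely in the wraparound regime, and one has to verify that in those cases the extra copy of $\divisor_0 x$ contributes exactly the $n$ points needed to reconcile the cyclic sums $\sum_{m = k - n}^{i - 1} \zeta^m P$ and $\sum_{m = (k - 1) - n}^{i - 2} \zeta^{m + 1} P$ that appear after taking $\divisor_0$ of the $N$-relation. Once that computation is pinned down, everything else is a formal rearrangement.
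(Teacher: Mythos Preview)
Your proof is correct and follows essentially the same route as the paper: both derive $Q_{i+1,\,j+1} = \zeta\, Q_{i,\,j}$ by taking $\divisor_0$ of the relation \eqref{Equation:N-PlusOne} and doing the wraparound case check, then read off the two recursions. The only minor difference is that you take $\gcd_k$ of the $Q$-identity directly (using the very definition $D_i = \gcd_k Q_{i,k}$) and then cancel $D_2 = \zeta D_1$ to get the $E$-recursion, whereas the paper first rewrites the identity as $D_{i+1} + E_{j+1} = \zeta D_i + \zeta E_j$ via \autoref{Lemma:DiEj-Construction} and then appeals to \autoref{Lemma:gcd-DiEj-Effective}; your variant is a slight streamlining but not a different strategy.
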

\begin{proof}
Taking $\divisor_0$ of both sides of \eqref{Equation:N-PlusOne} yields 
\[
\divisor_0 N_{i + 1, j + 1} = (\delta_{j, n} - \delta_{i, n}) \divisor_0 x +
\zeta \divisor_0 N_{i, j}. 
\]
Breaking into cases depending whether $i = n$ and/or $j = n$, we obtain
\[
Q_{i + 1, j + 1} = \zeta Q_{i, j},
\]
so by \autoref{Lemma:DiEj-Construction},
\begin{align}
\label{Equation:N-relation-to-Q-relation}
D_{i + 1} + E_{j + 1} &= \zeta D_{i} + \zeta E_{j}.
\end{align}
Taking $\gcd_{j}$ of both sides and applying 
\autoref{Lemma:gcd-DiEj-Effective} yields $D_{i + 1} = \zeta D_{i}$.  Similarly,
$E_{j + 1} = \zeta E_{j}$. 
\end{proof}

\begin{mydefinition}
\label{Def:DefDE}
Define $D \colonequals D_{1}$ and $E \colonequals E_{1}$.
\end{mydefinition}

We summarize our work in the following proposition.
\begin{myproposition} \label{Proposition:Zeroes-And-Poles}
For $1 \le i, j \le n$, 
\begin{align*}
\divisor_0 N_{i, j} &= \zeta^{i - 1} D + \zeta^{j - 1} E +
\left(\sum_{k = j - n}^{i - 2} \zeta^{k} P\right) \\
\divisor N_{i, j} &= \zeta^{i - 1} D + \zeta^{j - 1} E + \left(\sum_{k = j -
n}^{i - 2} \zeta^{k} P\right) - (2 g + (i - 1) + (n - j))\infty.
\end{align*}
\end{myproposition}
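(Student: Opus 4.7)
The proposition is essentially bookkeeping: everything needed has been proved in the preceding lemmas, and the task is just to assemble the pieces cleanly.

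The plan for the first equation is to unwind the definitions in order. By \autoref{Def:QDE},
\[
\divisor_{0} N_{i,j} = Q_{i,j} + \sum_{k = j - n}^{i - 2} \zeta^{k} P.
\]
Then \autoref{Lemma:DiEj-Construction} replaces $Q_{i,j}$ by $D_{i} + E_{j}$, and \autoref{Lemma:DjD1} together with \autoref{Def:DefDE} replaces $D_{i}$ by $\zeta^{i - 1} D$ and $E_{j}$ by $\zeta^{j - 1} E$. Substituting yields exactly the claimed formula for $\divisor_{0} N_{i, j}$.

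For the second equation the plan is to observe that the entries of $N$, being polynomials in $x$ and $y$, have no poles on $\mathcal{C}$ away from $\infty$. Hence
\[
\divisor N_{i, j} = \divisor_{0} N_{i, j} + v_{\infty}(N_{i, j}) \cdot \infty,
\]
so it suffices to identify $v_{\infty}(N_{i, j})$. This was already computed in \autoref{Lemma:N-P-NegValInftyNij}, which gives $-v_{\infty}(N_{i,j}) = 2g + (i - 1) + (n - j)$. Substituting this and the formula for $\divisor_{0} N_{i,j}$ from the previous paragraph produces the asserted expression for $\divisor N_{i, j}$.

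There is no real obstacle here: the statement is a summary of work already done, and the only point that requires even a moment's thought is checking that $N_{i, j}$ is indeed nonzero in $K(\mathcal{C})$ (needed to talk about $\divisor N_{i, j}$ and $\divisor_0 N_{i, j}$ at all), which is precisely the content of \autoref{Lemma:N-P-NegValInftyNij}.
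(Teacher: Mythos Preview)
Your proof is correct and follows exactly the paper's approach: the paper's proof is the one-line instruction to ``Combine \autoref{Def:QDE}, \autoref{Lemma:DiEj-Construction}, \eqref{Equation:Dj-D-relationship}, \eqref{Equation:Ej-E-relationship}, and \autoref{Lemma:N-P-NegValInftyNij},'' and you have spelled out precisely that combination.
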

\begin{proof}
Combine \autoref{Def:QDE}, \autoref{Lemma:DiEj-Construction},
\eqref{Equation:Dj-D-relationship}, \eqref{Equation:Ej-E-relationship}, and
\autoref{Lemma:N-P-NegValInftyNij}.
\end{proof}

\subsubsection{Orders at infinity}
\begin{mylemma} \label{Lemma:Chicken-McNugget}
There is no $f \in K(\mathcal{C})^\times$ having a pole only at $\infty$ such
that the pole order at $\infty$ is $n d - n - d$.
\end{mylemma}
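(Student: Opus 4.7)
The plan is to identify the set of pole orders at $\infty$ of functions in $K(\mathcal{C})^\times$ with a single pole at $\infty$ with the numerical semigroup $\langle n, d \rangle \colonequals \{an + bd : a, b \ge 0\}$, and then invoke the Sylvester--Frobenius theorem (the eponymous ``Chicken McNugget'' theorem): since $\gcd(n, d) = 1$, the largest non-negative integer not lying in $\langle n, d \rangle$ is exactly $nd - n - d$.

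First, I would observe that any $f \in K(\mathcal{C})^\times$ with pole only at $\infty$ is regular on the affine open $\mathcal{C} \setminus \{\infty\} = \Spec K[x, y] / (y^n - \prod_{i}(x + \alpha_i))$, so $f$ can be written uniquely as
\[
f = \sum_{\substack{a \ge 0 \\ 0 \le b \le n - 1}} c_{a, b}\, x^a y^b
\]
with $c_{a, b} \in K$ almost all zero. By \autoref{Lemma:NegVInfinitiesx} and \autoref{Lemma:NegVInfinitiesy}, the monomial $x^a y^b$ has pole order $an + bd$ at $\infty$.

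The main (small) obstacle is to show that distinct basis monomials have distinct pole orders, so that no cancellation of leading terms can occur. If $a_1 n + b_1 d = a_2 n + b_2 d$ with $0 \le b_1, b_2 \le n - 1$, then $n \mid (b_2 - b_1) d$, and since $\gcd(n, d) = 1$ we get $n \mid (b_2 - b_1)$; as $|b_2 - b_1| < n$, this forces $b_1 = b_2$ and hence $a_1 = a_2$. Consequently $-v_\infty(f)$ equals the maximum of $an + bd$ over $(a, b)$ with $c_{a, b} \ne 0$, and the set of attainable pole orders is $\{an + bd : a \ge 0,\ 0 \le b \le n - 1\}$. This equals $\langle n, d \rangle$, since for any $a, b \ge 0$ we may write $b = qn + r$ with $0 \le r \le n - 1$ and then $an + bd = (a + qd) n + r d$.

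Finally, by the Sylvester--Frobenius theorem, $nd - n - d \notin \langle n, d \rangle$, so no such $f$ exists. Apart from invoking the Chicken McNugget theorem (a standard elementary fact), the only real content is the distinctness-of-pole-orders argument, which is immediate from coprimality.
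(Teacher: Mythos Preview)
Your proof is correct and follows essentially the same route as the paper: identify functions regular away from $\infty$ with the affine coordinate ring, use coprimality of $n$ and $d$ to show the basis monomials $x^a y^b$ have pairwise distinct pole orders $an+bd$, and conclude that the Weierstrass semigroup at $\infty$ is $\langle n,d\rangle$. The only cosmetic difference is that the paper proves the Sylvester--Frobenius fact inline (observing $na+bd=nd-n-d$ forces $a\equiv -1\pmod d$ and $b\equiv -1\pmod n$, hence $a\ge d-1$, $b\ge n-1$, contradiction), whereas you cite it by name.
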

\begin{proof}
Let $R$ be the ring $R = K[x, y] / (y^{n} - \prod_{i = 1}^{d} (x +
\alpha_{i}))$; this is the affine coordinate ring of $\mathcal{C} \setminus \{
\infty \}$. A $K$-basis for $R$ is $\{ x^{a} y^{b} \colon 0 \le a \text{ and } 0
\le b \le n - 1 \}$; since \autoref{Lemma:NegVInfinitiesx} and
\autoref{Lemma:NegVInfinitiesy} imply $-v_{\infty}(x^{a} y^{b}) = n a + d b$
and $(d, n) = 1$ by assumption, each element of this basis has a different pole
order at $\infty$.  Therefore, the order of the pole at $\infty$ of any element
of $R$ is of the form $n a + d b$ for nonnegative $a, b$.

Suppose that $f \in K(\mathcal{C})^\times$ has a pole only at $\infty$. Then $f
\in R$.  From the previous paragraph, we have $-v_{\infty}(f) = n a + d b$ for
nonnegative $a, b$. If it were the case that $n a + d b = n d - n - d$, then $a
\equiv -1 \pmod{d}$ and $b \equiv -1 \pmod{n}$, so by nonnegativity of $a, b$ we
conclude that $a \ge d - 1$ and $b \ge n - 1$. But then
\[
n d - n - d = n a + d b \ge (n d - n) + (n d - d) = 2 n d - n - d,
\]
which is a contradiction.
\end{proof}

\begin{mydefinition}
\label{Definiton:AbelJacobiWr}
Define the Abel--Jacobi map
\begin{center}
\begin{tikzpicture}
\node(C) at (0, 1) {$\mathcal{C}$};
\node(J) at (3, 1) {$\mathcal{J}$};
\node(P) at (0, 0) {$P$};
\node(Pinf) at (3, 0) {$[P - \infty]$.};
\draw[->] (C) -- (J) node[midway, above] {$\AJ_{\infty}$};
\draw[|->] (P) -- (Pinf);
\end{tikzpicture}
\end{center}
For every $r \ge 1$, this induces a map $\mathcal{C}^r \to \mathcal{J}^r$.
Denote by $W_r$ the image of the composite morphism $\mathcal{C}^r \to
\mathcal{J}^r \to \mathcal{J}$, where the second map is the addition map. We
define $\Theta \colonequals W_{g - 1}$ to be the theta divisor.
\end{mydefinition}
\begin{mylemma} \label{Lemma:Thetag}
For $r \ge g$, $W_r = \mathcal{J}$.
\end{mylemma}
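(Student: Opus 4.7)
The plan is to first establish the case $r = g$ using Riemann--Roch, and then deduce the case $r > g$ by padding with copies of $\infty$.

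For the base case $r = g$, let $x \in \mathcal{J}$ be arbitrary; then $x = [D_0]$ for some degree-$0$ divisor $D_0$. The divisor $D_0 + g \infty$ has degree $g$, so Riemann--Roch gives
\[
h^0(D_0 + g \infty) - h^0(K - D_0 - g \infty) = g + 1 - g = 1,
\]
where $K$ is a canonical divisor. In particular $h^0(D_0 + g \infty) \ge 1$, so there exists a nonzero $f \in K(\mathcal{C})$ with $\divisor(f) + D_0 + g \infty \ge 0$. Setting $D \colonequals \divisor(f) + D_0 + g \infty$, we obtain an effective divisor of degree $g$ with $D \sim D_0 + g \infty$. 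Writing $D = P_1 + \cdots + P_g$, we get
\[
x = [D_0] = [D - g \infty] = \AJ_{\infty}(P_1) + \cdots + \AJ_{\infty}(P_g) \in W_g.
\]
Since $x$ was arbitrary, $W_g = \mathcal{J}$.

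For the inductive step, suppose $r > g$. Given any $x \in \mathcal{J}$, the case $r = g$ produces $P_1, \ldots, P_g \in \mathcal{C}$ with $x = \sum_{i = 1}^{g} [P_i - \infty]$. Since $[\infty - \infty] = 0$ in $\mathcal{J}$, we have
\[
x = \sum_{i = 1}^{g} [P_i - \infty] + \sum_{i = g + 1}^{r} [\infty - \infty],
\]
which exhibits $x$ as the image of $(P_1, \ldots, P_g, \infty, \ldots, \infty) \in \mathcal{C}^r$ under the composite morphism. Hence $W_r = \mathcal{J}$.

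There is no serious obstacle here; the only content is the application of Riemann--Roch, which is routine on a curve of genus $g$.
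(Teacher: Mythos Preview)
Your proof is correct and follows essentially the same approach as the paper: the paper's proof is a one-line appeal to Riemann--Roch to write any degree-zero divisor class as $[P_1 + \cdots + P_g - g\infty]$, and you have simply spelled out that Riemann--Roch computation and the trivial padding step for $r > g$.
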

\begin{proof}
It is a simple consequence of the Riemann-Roch theorem that any degree zero
divisor on $\mathcal{C}$ has a representation as $[P_1 + ... + P_g - g \infty]$
for points $P_1, \dots, P_g$ of $\mathcal{C}$.
\end{proof}

The $n = 2$ case of the following theorem is Theorem 2.5 of
\cite{zarhin2019division} (on page 506).
\begin{mytheorem} \label{Theorem:Unique-Intersection-In-J}\hfill
\begin{enumerate}[label=\upshape(\arabic*),
ref=\autoref{Theorem:Unique-Intersection-In-J}(\arabic*)]

\item \label{Theorem:Unique-Intersection-In-J-1-z}
The intersection of $\AJ_{\infty}(\mathcal{C})$ and $(1 - \zeta) \Theta$ in
$\mathcal{J}$ is exactly $\{ 0 \}$.

\item \label{Theorem:Unique-Intersection-In-J-z-1}
The intersection of $\AJ_{\infty}(\mathcal{C})$ and $(\zeta - 1) \Theta$ in
$\mathcal{J}$ is also exactly $\{ 0 \}$.

\end{enumerate}
\end{mytheorem}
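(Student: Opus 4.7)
I will prove part (1); part (2) follows by the analogous argument with $\zeta^{-1}$ in place of $\zeta$, since Lemma Chicken-McNugget and the entire apparatus of Section DivisionFormula are symmetric under this substitution.

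The plan is proof by contradiction. Suppose $P \in \mathcal{C}$ with $P \neq \infty$ and $[P - \infty] \in (1-\zeta)\Theta$. Since $\Theta = W_{g-1}$, there exists an effective divisor $F$ of degree $g - 1$ with
\[
(1-\zeta)[F - (g-1)\infty] = [P - \infty].
\]
By replacing $F$ with $F - \mathrm{mult}_\infty(F) \cdot \infty$ (and reducing the degree accordingly), we may assume $\infty \notin \mathrm{supp}(F)$; note $F$ remains nonzero because $P \neq \infty$. The hypothesis then rewrites as $F + \infty \sim \zeta F + P$, so there is a rational function $h$ on $\mathcal{C}$ with
\[
\divisor(h) = \zeta F + P - F - \infty,
\]
having a simple pole at $\infty$.

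The strategy is to exhibit a rational function on $\mathcal{C}$ whose only pole is at $\infty$, of order exactly $nd - n - d = 2g - 1$, contradicting \autoref{Lemma:Chicken-McNugget}. My candidate is $N_{1, n}/h$. By \autoref{Proposition:Zeroes-And-Poles} applied with $i = 1$ and $j = n$ (where the sum over $P$ is empty),
\[
\divisor(N_{1, n}) = D + \zeta^{-1} E - 2g \infty,
\]
so that
\[
\divisor(N_{1, n}/h) = (D + \zeta^{-1}E + F) - (\zeta F + P) - (2g - 1)\infty.
\]
The pole at $\infty$ has the required order $2g - 1$.

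The main obstacle is verifying the pointwise inequality
\[
\zeta F + P \;\le\; D + \zeta^{-1} E + F
\]
as effective divisors on $\mathcal{C} \setminus \{\infty\}$; once this is established, $N_{1, n}/h$ has its only pole at $\infty$, of order precisely $nd - n - d$, contradicting \autoref{Lemma:Chicken-McNugget}. My plan for verifying the inequality is to analyze each closed point $Q \neq \infty$ separately, using the explicit gcd characterization $D = \gcd_j \divisor_0 N_{1, j}$ from \autoref{Theorem:Main}, together with the vanishing-ideal description of \autoref{Lemma:QijMakesSense} and the $\zeta$-equivariance of \autoref{Lemma:N-Relations}. The linear equivalence $F + \infty \sim \zeta F + P$ forces the points where $\zeta F + P$ exceeds $F$ in multiplicity to be compatible with the matrix-adjugate structure of $N$, and any such excess must then be absorbed into the support of $D + \zeta^{-1}E$ via the common-vanishing constraints imposed by simultaneously considering all $N_{1, j}$. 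I expect this local analysis to be the technically delicate step; once it is completed, the Chicken--McNugget contradiction closes the argument.
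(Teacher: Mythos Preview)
Your proposal has a genuine gap, and it also relies on circular reasoning.

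\textbf{Circularity.} You invoke \autoref{Theorem:Main} (for the characterization $D=\gcd_j \divisor_0 N_{1,j}$) in the plan for the crucial inequality. But in the paper, the proof of \autoref{Theorem:Main} uses \autoref{Corollary:PnotleE}, which in turn rests on \autoref{Lemma:Zeroes-And-Poles}, and \emph{that} lemma is where \autoref{Theorem:Unique-Intersection-In-J} is invoked. So you cannot use \autoref{Theorem:Main} here. You may use \autoref{Proposition:Zeroes-And-Poles} (it precedes this theorem and does not depend on it), but then $D$ and $E$ are only the effective divisors of Definition~\ref{Def:DefDE}; at this stage of the argument nothing is known about $\deg D$, $\deg E$, or the relation between $D$ and $\gcd_j \divisor_0 N_{1,j}$.

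\textbf{The missing step.} Even setting circularity aside, your outline leaves the inequality $\zeta F + P \le D + \zeta^{-1}E + F$ entirely unproved, and there is no reason to expect it to hold. The divisors $D$ and $E$ depend on a particular choice of $n$th roots $r_i$, while $F$ is an arbitrary effective divisor of degree at most $g-1$ satisfying $(1-\zeta)F \sim P-\infty$; the only link between $D$ and $F$ is that $[D-F]\in\mathcal{J}[1-\zeta]$, which gives you no pointwise control on multiplicities. The appeal to \autoref{Lemma:QijMakesSense} and \autoref{Lemma:N-Relations} does not supply such control either: those lemmas constrain the vanishing of $N_{1,j}$ along the fiber $x=0$, not along the unknown support of $\zeta F$.

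\textbf{What the paper does instead.} The paper's proof is purely divisor-theoretic and uses none of the matrix machinery. Given an effective $D$ of degree $r\le g-1$ with $(1-\zeta)D\sim P-\infty$ and $v_\infty(D)=0$, one first invokes \autoref{Lemma:Thetag} to find an effective $E$ of degree $s\le g$ with $D+E\sim (r+s)\infty$ and $v_\infty(E)=0$. Setting $t=(nd-n-d)-(r+s)\ge 0$, the divisor
\[
F \;=\; \zeta^{t} D + E + \sum_{i=0}^{t-1}\zeta^{i}P
\]
is effective of degree $nd-n-d$, and a telescoping computation using $P\sim\infty+(1-\zeta)D$ shows $F\sim(nd-n-d)\infty$. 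Since $v_\infty(F)=0$, this contradicts \autoref{Lemma:Chicken-McNugget}. No inequality between unrelated effective divisors is needed, and no result after \autoref{Lemma:Thetag} is cited.
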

\begin{proof} \hfill
\begin{enumerate}[label=\upshape(\arabic*),
ref={the proof of \autoref{Theorem:Unique-Intersection-In-J}(\arabic*)}]

\item \label{TheoremProof:Unique-Intersection-In-J-1-z}
Suppose that there were some $P \in \mathcal{C} \setminus \{ \infty \}$ such
that $[P - \infty]$ lies in $(1 - \zeta) \Theta$. Then there is some effective
divisor $D$ of degree $r \le g - 1$ such that $(1 - \zeta) D \sim P - \infty$
and $v_{\infty}(D) = 0$. By \autoref{Lemma:Thetag}, there is an effective
divisor $E$ of degree $s \le g$ such that $D + E \sim (r + s) \infty$ and
$v_{\infty}(E) = 0$.  Define 
\[
t \colonequals (n d - n - d) - (r + s).
\]
Since $r \le g - 1$, $s \le g$, and $n d - n - d = 2 g - 1$, we have $t \ge 0$.
Consider the divisor
\[
F \colonequals \zeta^{t} D + E + \sum_{i = 0}^{t - 1} \zeta^{i} P.
\]
Since $E \sim (r + s) \infty - D$ and $P \sim \infty + (1 - \zeta) D$,
\begin{align*}
F &\sim \zeta^{t} D - D + \sum_{i = 0}^{t - 1} (\zeta^{i} D - \zeta^{i + 1} D) +
(r + s + t) \infty \\
&= 0 + (r + s + t) \infty \\
&= (n d - n - d) \infty.
\end{align*}
Since $v_{\infty}(F) = 0$ and $F \sim (n d - n - d) \infty$, this contradicts
\autoref{Lemma:Chicken-McNugget}.

\item \label{TheoremProof:Unique-Intersection-In-J-z-1}
Applying the previous part to $\zeta^{-1}$ instead of $\zeta$, we see that
$\mathcal{C} \cap (1 - \zeta^{-1}) \Theta = \{ 0 \}$. Applying $\zeta$ to both
sides gives $\zeta \mathcal{C} \cap (\zeta - 1) \Theta = \{ 0 \}$.  Since $\zeta
\mathcal{C} = \mathcal{C}$, we are done. 

\end{enumerate}
\end{proof}

\begin{mylemma} \label{Lemma:Zeroes-And-Poles}
\hfill
\begin{enumerate}[label=\upshape(\arabic*),
ref=\autoref{Lemma:Zeroes-And-Poles}(\arabic*)]

\item \label{Lemma:ZeroesAndPolesdegDE}
We have $\deg D = \deg E = g$.

\item \label{Lemma:ZeroesAndPolesNoWeierstrass}
The support of $D$ avoids $\{ (-\alpha_{1}, 0), \cdots, (-\alpha_{d}, 0), \infty
\}$. The same holds for $E$.

\end{enumerate}
\end{mylemma}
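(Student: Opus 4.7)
The plan has two main steps, corresponding to the two parts of the lemma.

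For part (1), summing degrees in the formula of \autoref{Proposition:Zeroes-And-Poles} gives $\deg D + \deg E = 2g$, so the task reduces to proving $\deg D \ge g$ and $\deg E \ge g$. I would obtain $\deg D \ge g$ by extracting the relation $(1 - \zeta)[D - (\deg D)\infty] = [P - \infty]$ in $\mathcal{J}$ from the principal divisor $\divisor(N_{1,n}/\zeta^{*} N_{1, n-1})$, essentially as sketched in the introduction. If $\deg D = d_0 < g$, then writing $D$ as a sum of $d_0$ effective points exhibits $[D - d_0 \infty] \in W_{d_0} \subseteq W_{g-1} = \Theta$, so $[P - \infty] \in (1 - \zeta)\Theta$, and \autoref{Theorem:Unique-Intersection-In-J-1-z} forces $P = \infty$, contradicting the assumption that $P$ is affine. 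For $\deg E \ge g$, I would start from the $(i, j) = (1, n)$ case of \autoref{Proposition:Zeroes-And-Poles}, namely $\divisor N_{1,n} = D + \zeta^{-1} E - 2g \infty$, which yields $[E - (\deg E)\infty] = -\zeta[D - (\deg D)\infty]$; multiplying the earlier identity by $-\zeta$ then gives $(\zeta - 1)[E - (\deg E)\infty] = [\zeta P - \infty]$, and \autoref{Theorem:Unique-Intersection-In-J-z-1} delivers the analogous contradiction.

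For part (2), the absence of $\infty$ from the supports is immediate from the definitions: both $D$ and $E$ are greatest common divisors of effective portions $\divisor_0 N_{i, j}$, which by construction carry zero coefficient at $\infty$. For a Weierstrass point $W = (-\alpha_i, 0)$, I would argue by contradiction. If $v_W(D) \ge 1$, then (provided $W$ is not in the orbit of $P$, so that the $\sum \zeta^k P$ correction in $Q_{1, j}$ contributes nothing at $W$) every $N_{1, j}$ vanishes at $W$. Now apply \eqref{Equation:N-PlusOne}, noting that $\sigma$ acts trivially on the value of any function at $W$ (since $\sigma$ only rescales $y$ and the $y$-coordinate of $W$ is $0$) while the factor $((-1)^{n-1}(-\alpha_i))^{\pm 1}$ is a nonzero constant whenever $\alpha_i \neq 0$. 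Iterating the shift propagates the vanishing to every entry of $N$: $N_{k, j + k - 1}(W) = 0$ for all $k, j$, hence $N(W) = 0$, contradicting \autoref{Lemma:M-rk-Bound}. The argument for $E$ is identical after rewriting $E = \gcd_i \divisor_0 N_{i, 1}$ (which follows from \autoref{Lemma:DiEj-Construction} together with \autoref{Lemma:gcd-DiEj-Effective}) and shifting along rows instead of columns.

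The main obstacle is the edge case $\alpha_i = 0$, which can only occur when $P = W = (0, 0)$ is itself the Weierstrass point under consideration. Here the shift factor vanishes and the $\sum \zeta^k P$ correction concentrates at $W$, so the propagation argument fails as stated. In that case I would instead use \autoref{Lemma:QijMakesSense} together with the uniformizer data $v_W(y) = 1$ and $v_W(x) = n$ to pin down $v_W(N_{1, j}) = n - j$ exactly for some $j$, which forces $v_W(Q_{1, j}) = 0$ and hence $v_W(D) = 0$ directly.
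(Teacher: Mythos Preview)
For part~(1) your argument is essentially the paper's; the only cosmetic difference is that the paper writes down a second explicit function with divisor $(\zeta-1)E - (\zeta P - \infty)$, whereas you deduce the same relation algebraically from the one for $D$ together with $D + \zeta^{-1}E \sim 2g\infty$.

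For part~(2) you take a genuinely different route. The paper reuses \autoref{Theorem:Unique-Intersection-In-J} uniformly: if $R$ is any $\zeta$-fixed point (Weierstrass or $\infty$) with $D \ge R$, then $D - R$ is effective of degree $g-1$, so its class lies in $\Theta$, and since $[R-\infty]\in\mathcal{J}[1-\zeta]$ one still has $(1-\zeta)[D-R-(g-1)\infty] = [P-\infty]$, contradicting \autoref{Theorem:Unique-Intersection-In-J-1-z}; no case analysis is needed. Your argument via the shift relation \eqref{Equation:N-PlusOne} and \autoref{Lemma:M-rk-Bound} is correct but buys you extra bookkeeping: a separate (trivial) treatment of $\infty$, then the generic Weierstrass case, then the degenerate case $\alpha_i = 0$. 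In that last case note that the \emph{statement} of \autoref{Lemma:QijMakesSense} only gives $v_W(N_{1,j}) \ge n-j$; to get equality at $j=n$ you must reach into its proof and compute $\det U$ at $W$ (it is $s_{d-1}^{n-1}$, nonzero when $s_d = 0$). Two small slips: what follows from \autoref{Lemma:DiEj-Construction} and \autoref{Lemma:gcd-DiEj-Effective} is $E = \gcd_i Q_{i,1}$, not $\gcd_i \divisor_0 N_{i,1}$ (though the two agree at $W$ off the orbit of $P$, which is all you need); and your edge-case paragraph treats only $D$, but $v_W(Q_{1,n})=0$ also forces $v_W(E)=0$ via $Q_{1,n}=D+E_n$ and $v_W(E_n)=v_W(E)$ at a $\zeta$-fixed point. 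In short, the paper's approach is cleaner and more conceptual, while yours stays closer to the matrix algebra and avoids invoking $\Theta$ a second time.
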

\begin{proof} \hfill
\begin{enumerate}[label=\upshape(\arabic*),
ref={the proof of \autoref{Lemma:Zeroes-And-Poles}(\arabic*)}]

\item \label{LemmaProof:ZeroesAndPolesdegDE}
Applying \autoref{Proposition:Zeroes-And-Poles} gives 
\begin{align}
\divisor(N_{1, n} / \zeta^{*} N_{1, n - 1}) &= \divisor N_{1, n} - \zeta
\divisor N_{1, n - 1} \nonumber\\
&= (D + \zeta^{-1} E - 2 g \infty) - \zeta (D + \zeta^{-2} E + \zeta^{-1} P - (2
g + 1) \infty) \nonumber\\
&= (1 - \zeta) D - (P - \infty). \label{Equation:1zDP}
\end{align}
Suppose that $\deg D \le g - 1$. Then $[(1 - \zeta) D] \in (1 - \zeta) \Theta$.
Since $[(1 - \zeta) D]$ also equals $[P - \infty] \in \AJ_{\infty}(\mathcal{C})
\setminus \{ 0 \}$, we have found an element of $(1 - \zeta) \Theta \cap \left(
\AJ_{\infty}(\mathcal{C}) \setminus \{ 0 \} \right)$, contradicting
\autoref{Theorem:Unique-Intersection-In-J-1-z}. Hence,
\begin{equation}
\label{Equation:degDatleastg}
\deg D \ge g.
\end{equation}
Similarly,
\begin{equation}
\label{Equation:z1EzP}
\divisor(\zeta^{2*} N_{1, n} / \zeta^{*} N_{2, n}) = (\zeta - 1) E - (\zeta P -
\infty), 
\end{equation}
and a similar argument with \autoref{Theorem:Unique-Intersection-In-J-z-1}
implies
\begin{equation}
\label{Equation:degEatleastg}
\deg E \ge g.
\end{equation}

Taking $i = 1$ and $j = n$ in \autoref{Proposition:Zeroes-And-Poles} yields $D +
\zeta^{-1} E - 2 g \infty = \divisor N_{1, n}$, so
\begin{equation}
\label{Equation:degDE}
\deg D + \deg E = 2 g.
\end{equation}
Combining \eqref{Equation:degDatleastg}, \eqref{Equation:degEatleastg}, and
\eqref{Equation:degDE} gives $\deg D = \deg E = g$, as desired.

\item \label{LemmaProof:ZeroesAndPolesNoWeierstrass}
Suppose that $R \in \{ (-\alpha_{1}, 0), \cdots, (-\alpha_{d}, 0), \infty \}$
and $D \ge R$. Then $(1 - \zeta)[D - R] \in (1 - \zeta) \Theta$. Since $R \in
\mathcal{J}[1 - \zeta]$, \eqref{Equation:1zDP} implies that $(1 - \zeta)[D - R]
= [P - \infty] \in \AJ_{\infty}(\mathcal{C}) \setminus \{ 0 \}$. Hence $(1
- \zeta)[D - R]$ is an element of $(1 - \zeta) \Theta \cap
\AJ_{\infty}(\mathcal{C}) \setminus \{ 0 \}$, contradicting
\autoref{Theorem:Unique-Intersection-In-J-1-z}.

Similarly, if $S \in \{ (-\alpha_{1}, 0), \cdots, (-\alpha_{d}, 0), \infty \}$
and $E \ge S$, then \eqref{Equation:z1EzP} implies $(\zeta - 1) [E - S] = [\zeta
P - \infty]$, so $(\zeta - 1) [E - S] \in (\zeta - 1) \Theta \cap
\AJ_{\infty}(\mathcal{C}) \setminus \{ 0 \}$, contradicting
\autoref{Theorem:Unique-Intersection-In-J-z-1}.  
\end{enumerate}

\end{proof}

\begin{mycorollary}
\label{Corollary:PnotleE}
$P \not\le E$.
\end{mycorollary}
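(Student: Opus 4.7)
The plan is to mimic the argument used for \autoref{Lemma:ZeroesAndPolesNoWeierstrass}, but working with $E$ and $\zeta - 1$ in place of $D$ and $1 - \zeta$; in particular, the contradiction will come from \autoref{Theorem:Unique-Intersection-In-J-z-1} rather than \autoref{Theorem:Unique-Intersection-In-J-1-z}. Assume for contradiction that $P \le E$.

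By \autoref{Lemma:ZeroesAndPolesdegDE}, $\deg E = g$, so the effective divisor $E - P$ has degree $g - 1$. By the definition of $\Theta = W_{g-1}$ in \autoref{Definiton:AbelJacobiWr}, the class $[E - P - (g - 1)\infty]$ lies in $\Theta$. Applying $\zeta - 1$ and using the fact that $\zeta$ fixes $\infty$ gives
\[
(\zeta - 1)[E - P - (g - 1)\infty] = (\zeta - 1)[E - P] \in (\zeta - 1)\Theta.
\]

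Next, I would pin down this class using \eqref{Equation:z1EzP}, which says $(\zeta - 1) E \sim \zeta P - \infty$. Subtracting the class $(\zeta - 1)[P] = [\zeta P - P]$ gives
\[
(\zeta - 1)[E - P] = [\zeta P - \infty] - [\zeta P - P] = [P - \infty],
\]
which is a nonzero element of $\AJ_{\infty}(\mathcal{C})$ since $P \neq \infty$. This exhibits a nonzero element of $\AJ_{\infty}(\mathcal{C}) \cap (\zeta - 1)\Theta$, contradicting \autoref{Theorem:Unique-Intersection-In-J-z-1}.

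There is essentially no obstacle here: once one notices that the $\zeta P$ term in $(\zeta - 1)E \sim \zeta P - \infty$ cancels exactly with the $\zeta P$ coming from $(\zeta - 1)P$, the rest is the same contradiction argument as in \autoref{LemmaProof:ZeroesAndPolesNoWeierstrass}. This corollary thus plays the role of an ``$E$-avoids-$P$'' statement complementing the ``$D$-avoids-the-Weierstrass-points'' statement of \autoref{Lemma:ZeroesAndPolesNoWeierstrass}.
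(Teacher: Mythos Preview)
Your proof is correct and follows essentially the same approach as the paper: assume $P \le E$, observe that $E - P$ (in the paper's version, $\zeta^{-1}(E - P)$) is effective of degree $g - 1$, apply $\zeta - 1$, and land on a nonzero point of $\AJ_{\infty}(\mathcal{C}) \cap (\zeta - 1)\Theta$, contradicting \autoref{Theorem:Unique-Intersection-In-J-z-1}. Your computation is slightly more direct since you invoke \eqref{Equation:z1EzP} immediately to get $(\zeta - 1)[E - P] = [P - \infty]$, whereas the paper routes through \autoref{Proposition:Zeroes-And-Poles} and \eqref{Equation:1zDP} to obtain $(\zeta - 1)[\zeta^{-1}(E - P)] \sim \zeta^{-1}P - \infty$.
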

\begin{proof}
Suppose that $P \le E$ and let $E' = \zeta^{- 1} E -
\zeta^{-1} P$, so that by \autoref{Lemma:ZeroesAndPolesdegDE}, $E'$ is an
effective degree $g - 1$ divisor on $\mathcal{C}$ satisfying
\begin{align*}
(\zeta - 1) E' &= (\zeta - 1) \zeta^{-1} E - (\zeta - 1) \zeta^{-1} P \\
&\sim (\zeta - 1)(2 g \infty - D) - (P - \zeta^{-1} P) &\text{(by
\autoref{Proposition:Zeroes-And-Poles} with } i = 1, j = n\text{)} \\
&= (1 - \zeta)(D) - (P - \zeta^{-1} P) \\
&\sim P - \infty - (P - \zeta^{-1} P) &\text{(by \eqref{Equation:1zDP})} \\
&= \zeta^{-1} P - \infty, 
\end{align*}
which contradicts \autoref{Theorem:Unique-Intersection-In-J-z-1}. 
\end{proof}

\begin{proof}[Proof of \autoref{Theorem:Main}]
We wish to check $\gcd_{1 \le j \le n} \divisor_{0} N_{1, j} = D$. Applying
\autoref{Proposition:Zeroes-And-Poles} with $i = 1$ and then taking $\gcd$
yields
\[
\gcd_{1 \le j \le n} \divisor_{0} N_{1, j} \ge D.
\]
For contradiction, suppose that $Q$ is a point on $\mathcal{C}$ such that
\[
Q \le \gcd_{1 \le j \le n} (\divisor_{0} N_{1, j} - D).
\]
Then \autoref{Proposition:Zeroes-And-Poles} implies that for all $j \in [1, n]$,
\begin{align}
\label{Equation:HypotheticalQ}
Q &\le \zeta^{j - 1} E + \sum_{k = j - n}^{- 1} \zeta^{k} P = E_{j} +
\sum_{k = j - n}^{- 1} \zeta^{k} P 
\end{align}
by \eqref{Equation:Ej-E-relationship}.  By \autoref{Lemma:gcd-DiEj-Effective},
there must be some $u \in [1, n]$ such that $Q \not\le E_{u}$. Then
\[
Q \le \sum_{k = u - n}^{-1} \zeta^{k} P,
\]
so $Q = \zeta^{v} P$ for some $v \in [u - n, -1]$. 

\begin{enumerate}[label=\textbf{Case~\Alph*:}, ref={Case~\Alph*},
leftmargin=*, itemindent=25pt]
\item \label{Case:PFixedByZeta}
$P$ is fixed by $\zeta$

Then $Q = \zeta^{v} P = P$.  Substituting $j = n$ into
\eqref{Equation:HypotheticalQ} produces $Q \le \zeta^{n - 1} E$, so we conclude
that $P = \zeta P = \zeta Q \le E$, contradicting \autoref{Corollary:PnotleE}.

\item \label{Case:PNotFixedByZeta}
$P$ is not fixed by $\zeta$

Then the $\zeta^{k} P$ are distinct. Applying \eqref{Equation:HypotheticalQ}
with $j = v + n + 1$ then gives
\[
Q \le \zeta^{v} E + \sum_{k = v + 1}^{- 1} \zeta^{k} P
\]
Since $Q = \zeta^{v} P$ and the $\zeta^{k} P$ are distinct, we conclude that
$\zeta^{v} P \le \zeta^{v} E$, which implies that $P \le E$, again contradicting
\autoref{Corollary:PnotleE}. 
\end{enumerate}
\end{proof}

\subsection{Varying the choice of \texorpdfstring{$r_{i}$}{r\_i}}

Recall that $(r_{1}, \cdots, r_{d})$ is any $d$-tuple of elements of $K$
satisfying
\begin{align*}
r_{i}^{n} &= \alpha_{i} \\
\prod r_{i} &= b.
\end{align*}

Write $\mathbf{r}$ to denote $(r_1, \dots, r_d)$.  Since the $D$ in 
\autoref{Theorem:Main} depends on the choice of $\mathbf{r}$, we will denote it
$D_{\mathbf{r}}$ from now on. For $\mathbf{a} = (a_1, \dots, a_d) \in
(\mathbf{Z} / n \mathbf{Z})^d$, write $\zeta^{\mathbf{a}} \mathbf{r}$ to denote
$(\zeta^{a_{1}} r_{1}, \ldots, \zeta^{a_{d}} r_{d})$.

Applying \autoref{Theorem:Main} with $b$ replaced by $\zeta^{-(a_1 + \dots +
a_d)} b$ and $\mathbf{r}$ replaced with $\zeta^{-\mathbf{a}} \mathbf{r}$, we
obtain
\[
(1 - \zeta) D_{\zeta^{-\mathbf{a}} \mathbf{r}} \sim \zeta^{-(a_{1} + \cdots +
a_{d})} P - \infty,
\]
so
\[
D_{\mathbf{r}} - \zeta^{a_{1} + \ldots + a_{d}} D_{\zeta^{-\mathbf{a}}
\mathbf{r}} \in \mathcal{J}[1 - \zeta].
\]
Our goal is to write down $D_{\mathbf{r}} - \zeta^{a_{1} + \ldots + a_{d}}
D_{\zeta^{-\mathbf{a}} \mathbf{r}}$ in terms of a basis for $\mathcal{J}[1 -
\zeta]$. First, we provide a description of $\mathcal{J}[1 - \zeta]$.

\begin{mydefinition}
For $1 \le i \le d$, define $P_{i} \colonequals (-\alpha_{i}, 0) \in
\mathcal{C}(K)$. Define $\mathcal{P} \colonequals \{ P_{1}, \dots, P_{d} \}$.
\end{mydefinition}

\begin{myproposition} 
\label{Proposition:SuperellipticProp611Poonen}
There is a split exact sequence of $\mathbf{Z} / n \mathbf{Z}$-modules
\begin{center}
\begin{tikzpicture}
\node(0m) at (0, 0) {$0$};
\node(1m) at (3, 0) {$\mathbf{Z} / n \mathbf{Z}$};
\node(2m) at (6, 0) {$(\mathbf{Z} / n \mathbf{Z})^{\mathcal{P}}$};
\node(3m) at (9, 0) {$\mathcal{J}[1 - \zeta]$};
\node(4m) at (12, 0) {$0$};
\draw[->] (0m) -- (1m);
\draw[->] (1m) -- (2m) node[midway, above] {$\Delta$};
\draw[->] (2m) -- (3m) node[midway, above] {$s$};
\draw[->] (3m) -- (4m);
\end{tikzpicture}
\end{center}
where
\begin{align*}
\Delta(1) &= (1,\; \dots,\; 1) \\
s(a_1, \dots, a_d) &= \sum_{i = 1}^d a_i [P_{i} - \infty].
\end{align*}
\end{myproposition}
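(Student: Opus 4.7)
My plan is to verify exactness at each middle term and then exhibit a splitting. The injectivity of $\Delta$ is immediate since $\Delta(1) = (1, \ldots, 1)$ has order $n$. For $\mathrm{im}(\Delta) \subseteq \ker(s)$, I would identify $\divisor(y) = \sum_{i=1}^{d} P_i - d \infty$: the cover $\mathcal{C} \to \mathbf{P}^1$ is totally ramified at each $P_i$, so $y$ is a uniformizer there, and $-v_\infty(y) = d$ by \autoref{Lemma:NegVInfinitiesy}. Hence $\sum_i [P_i - \infty] = 0$ in $\mathcal{J}$, giving $s \circ \Delta = 0$. Moreover, each $P_i$ is fixed by $\zeta$, so $(1 - \zeta)[P_i - \infty] = 0$, placing the image of $s$ inside $\mathcal{J}[1 - \zeta]$.

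To pin down $|\mathcal{J}[1 - \zeta]|$, I would use the factorization $\prod_{k=1}^{n-1}(1 - \zeta^k) = n$ in $\mathbf{Z}[\zeta]$, which yields the identity $[n] = \prod_{k=1}^{n-1}(1 - \zeta^k)$ of endomorphisms of $\mathcal{J}$. Since the $1 - \zeta^k$ are Galois conjugate, they share a common degree; combining with $\deg[n] = n^{2g}$ and $2g = (n-1)(d-1)$ forces $\deg(1 - \zeta) = n^{d-1}$, so $|\mathcal{J}[1 - \zeta]| = n^{d-1}$.

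The main obstacle is the surjectivity of $s$ onto $\mathcal{J}[1 - \zeta]$. Once this is in hand, counting gives $|\ker s| = n^{d}/n^{d-1} = n = |\mathrm{im}(\Delta)|$, which together with $\mathrm{im}(\Delta) \subseteq \ker s$ forces $\ker s = \mathrm{im}(\Delta)$. I would prove surjectivity by Hilbert 90: take $[D] \in \mathcal{J}[1 - \zeta]$ and choose $g \in K(\mathcal{C})^\times$ with $(1 - \zeta) D = \divisor(g)$. The product $\prod_{k=0}^{n-1} \zeta^{k*} g$ has trivial divisor and hence lies in $K^\times$; rescaling $g$ by an $n$th root of this constant (which exists in the algebraically closed field $K$, as $\characteristic(K) \nmid n$) lets us assume $\Norm(g) = 1$. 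Hilbert 90 applied to the cyclic extension $K(\mathcal{C})/K(x)$ then produces $h \in K(\mathcal{C})^\times$ with respect to which $D - \divisor(h)$ is $\zeta$-invariant. Since the $\zeta$-fixed points of $\mathcal{C}$ are exactly $\{P_1, \ldots, P_d, \infty\}$, any $\zeta$-invariant divisor decomposes as $\pi^{*} D_0 + \sum_i a_i P_i + a_\infty \cdot \infty$, where $\pi \colon \mathcal{C} \to \mathbf{P}^1$ is the quotient by $\langle \zeta \rangle$. Because $\mathcal{J}(\mathbf{P}^1) = 0$ and $\pi^{*} \infty_{\mathbf{P}^1} = n \infty$, the $\pi^{*} D_0$ term is linearly equivalent to a multiple of $\infty$; therefore $[D] = \sum_i a_i [P_i - \infty] = s(a_1, \ldots, a_d)$.

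Finally, the short exact sequence splits because the first-coordinate projection $(a_1, \ldots, a_d) \mapsto a_1$ is a retraction of $\Delta$.
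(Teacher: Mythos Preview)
Your surjectivity argument via Hilbert~90 and the decomposition of $\zeta$-invariant divisors is correct and amounts to a hands-on version of the paper's group-cohomology proof; the splitting via first-coordinate projection is also fine (the paper uses $(a_i)\mapsto d^{-1}\sum a_i$ instead). The gap is in your computation of $|\mathcal{J}[1-\zeta]|$. The identity $\prod_{k=1}^{n-1}(1-\zeta^k)=n$ holds in $\mathbf{Z}[\zeta_n]$, but the $\zeta$-action on $\mathcal{J}$ does not factor through $\mathbf{Z}[\zeta_n]$ when $n$ is composite: one has $1+\zeta+\cdots+\zeta^{n-1}=0$ in $\mathrm{End}(\mathcal{J})$ (since $\sum_k \zeta^k P \sim n\infty$ for every $P$), but not $\Phi_n(\zeta)=0$. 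Concretely, for $n=4$ the endomorphism $\zeta^2$ is the involution $(x,y)\mapsto(x,-y)$; the quotient $\mathcal{C}/\langle\zeta^2\rangle$ is the hyperelliptic curve $z^2=\prod(x+\alpha_i)$ of genus $(d-1)/2>0$, so $1-\zeta^2$ has positive-dimensional kernel and is not an isogeny. Hence $\prod_{k=1}^{n-1}(1-\zeta^k)\ne[n]$ as endomorphisms, the $1-\zeta^k$ are not all Galois conjugate, and your degree count collapses. (For prime $n$ your argument does go through, since then each $(1-\zeta^k)/(1-\zeta)$ is a cyclotomic unit.)

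You can repair this in two ways. One is to compute $\deg(1-\zeta)$ correctly as $\det(1-\zeta^*\mid H^1)=\prod_{k=1}^{n-1}(1-\zeta^k)^{d-1}=n^{d-1}$, using that the characteristic polynomial of $\zeta^*$ on $H^1(\mathcal{C})$ is $(1+T+\cdots+T^{n-1})^{d-1}$; over $\mathbf{C}$ this is \autoref{Lemma:CharPolyZetaH1}, though in positive characteristic it needs a separate argument. Cleaner is to prove $\ker s\subseteq\operatorname{im}\Delta$ directly and drop the counting step entirely: if $\sum a_i(P_i-\infty)=\divisor(f)$ then the divisor is $\zeta$-invariant, so $\zeta^*f/f$ is a constant $n$th root of unity $\zeta^m$, whence $fy^{-m}\in K(x)$; this forces $f=c\,y^m\prod(x+\alpha_i)^{e_i}$ and hence $a_i\equiv m\pmod n$ for all $i$. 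This is exactly what the paper's cohomological computation establishes (simultaneously with surjectivity), just unpacked.
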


\begin{proof} (c.f.~\cite{poonen2006lectures}, Proposition 6.1.1)

\begin{enumerate}[label=\textbf{Step \arabic*:}, ref={Step \arabic*},
leftmargin=*, itemindent=25pt]
\item \label{Step:S611PsIsWellDefined}
$s$\textit{ is well-defined.}

Each point in $\mathcal{P} \cup \{ \infty \}$ is fixed by $\zeta$, so
$[P_{i} - \infty] \in \mathcal{J}[1 - \zeta]$. The calculation
\begin{equation}
\label{Equation:WeierstrassPointsNTorsion}
\divisor(x + \alpha_i) = n P_{i} - n \infty
\end{equation}
shows that the divisor classes $[P_{i} - \infty]$ are $n$-torsion. 

\item \label{Step:S611PsDeltaZero}
$s \circ \Delta = 0$\textit{.}

This follows from $\divisor(y) = \displaystyle\sum_{i = 1}^{d} [P_{i} -
\infty]$.

\item \label{Step:S611Pkers}
$\ker(s)$\textit{ is generated by }$(1, \dots, 1)$\textit{.}

\item \label{Step:S611Pcokers}
$s$\textit{ is surjective.}

We modify the proof of Proposition {3.2} in \cite{schaefer1998computing} to
prove \autoref{Step:S611Pkers} and \autoref{Step:S611Pcokers} simultaneously.
Use $\Div^{0}$ to denote the degree-zero divisors on $\mathcal{C}$ and use
$\Princ$ to denote the subgroup of principal divisors.  The following are exact
sequences of $\mathbf{Z}[\zeta]$-modules.

\begin{center}
\begin{tikzpicture}
\node(mm) at (-3, 1) {$0$};
\node(0m) at (0, 1) {$\overline{K}^\times$};
\node(1m) at (3, 1) {$\overline{K}(\mathcal{C})^\times$};
\node(2m) at (6, 1) {$\Princ$};
\node(3m) at (9, 1) {$0$;};
\node(mp) at (-3, 0) {$0$};
\node(0p) at (0, 0) {$\Princ$};
\node(1p) at (3, 0) {$\Div^0$};
\node(2p) at (6, 0) {$\mathcal{J}$};
\node(3p) at (9, 0) {$0$.};
\draw[->] (mm) -- (0m);
\draw[->] (0m) -- (1m);
\draw[->] (1m) -- (2m);
\draw[->] (2m) -- (3m);
\draw[->] (mp) -- (0p);
\draw[->] (0p) -- (1p);
\draw[->] (1p) -- (2p);
\draw[->] (2p) -- (3p);
\end{tikzpicture}
\end{center}
We now apply group cohomology with the group $G = \langle \zeta \rangle$. 

\begin{enumerate}[label=\upshape{(\roman*)}, ref={\theenumi(\roman*)}]
\item \label{Step:S611PcokersGalCohoHilb90}
Since $G \simeq \Gal(\overline{K}(\mathcal{C}) / \overline{K}(x))$,
\begin{equation}
\label{Equation:H0KC}
H^0(\overline{K}(\mathcal{C})^\times) = \overline{K}(x)^\times
\end{equation}
and Hilbert's Theorem 90 yields
\begin{equation}
\label{Equation:H1KC}
H^1(\overline{K}(\mathcal{C})^\times) = 0.
\end{equation}

\item \label{Step:S611PcokersGalCohoTrivAction}
Since $\overline{K}^\times$ is a trivial $G$-module, 
\begin{align}
H^0(\overline{K}^\times) &= 0 \label{Equation:H0K} \\
H^1(\overline{K}^\times) &= \mu_{n}(\overline{K}) \label{Equation:HoddK} \\
H^2(\overline{K}^\times) &= \overline{K}^\times / \overline{K}^{\times n} = 0.
\label{Equation:H2K}
\end{align}
Substituting \eqref{Equation:H1KC} and \eqref{Equation:H2K} into
\begin{center}
\begin{tikzpicture}
\node(0m) at (0, 1) {$H^1(\overline{K}(\mathcal{C})^\times)$};
\node(1m) at (3, 1) {$H^1(\Princ)$};
\node(2m) at (6, 1) {$H^2(\overline{K}^\times)$};
\draw[->] (0m) -- (1m);
\draw[->] (1m) -- (2m);
\end{tikzpicture}
\end{center}
yields
\begin{equation}
\label{Equation:H1Princ}
H^1(\Princ) = 0.
\end{equation}

\item \label{Step:S611PcokersGalCohoTrivH0Princ}
Substituting \eqref{Equation:H0K}, \eqref{Equation:H0KC},
\eqref{Equation:HoddK}, \eqref{Equation:H1KC} into 
\begin{center}
\begin{tikzpicture}
\node(0m) at (0, 1) {$H^0(\overline{K}^\times)$};
\node(1m) at (3, 1) {$H^0(\overline{K}(C)^\times)$};
\node(2m) at (6, 1) {$H^0(\Princ)$};
\node(3m) at (9, 1) {$H^1(\overline{K}^\times)$};
\node(4m) at (12, 1) {$H^1(\overline{K}(\mathcal{C})^\times)$};
\draw[->] (0m) -- (1m);
\draw[->] (1m) -- (2m);
\draw[->] (2m) -- (3m);
\draw[->] (3m) -- (4m);
\end{tikzpicture}
\end{center}
yields
\begin{center}
\begin{tikzpicture}
\node(0m) at (0, 1) {$0$};
\node(1m) at (3, 1) {$\overline{K}(x)^\times$};
\node(2m) at (6, 1) {$H^0(\Princ)$};
\node(3m) at (9, 1) {$\mu_{n}(\overline{K})$};
\node(4m) at (12, 1) {$0$,};
\draw[->] (0m) -- (1m);
\draw[->] (1m) -- (2m);
\draw[->] (2m) -- (3m);
\draw[->] (3m) -- (4m);
\end{tikzpicture}
\end{center}
so since the image of $\divisor(y) \in H^{0}(\Princ)$ generates
$\mu_{n}(\overline{K})$, 
\begin{equation}
H^0(\Princ) \text{ is generated by } \{ \divisor(y) \} \cup \{ \divisor(u)
\colon u \in \overline{K}(x)^\times  \}.
\label{Equation:H0Princ}
\end{equation}

\item \label{Step:S611PcokersGalCohoTrivJ1mz}
We substitute \eqref{Equation:H1Princ} into the long exact sequence
\begin{center}
\begin{tikzpicture}
\node(mm) at (-3, 1) {$0$};
\node(0m) at (0, 1) {$H^0(\Princ)$};
\node(1m) at (3, 1) {$H^0(\Div^0)$};
\node(2m) at (6, 1) {$\mathcal{J}[1 - \zeta]$};
\node(3m) at (9, 1) {$H^1(\Princ)$};
\draw[->] (mm) -- (0m);
\draw[->] (0m) -- (1m);
\draw[->] (1m) -- (2m);
\draw[->] (2m) -- (3m);
\end{tikzpicture}
\end{center}
to obtain
\begin{center}
\begin{tikzpicture}
\node(mm) at (-3, 1) {$0$};
\node(0m) at (0, 1) {$H^0(\Princ)$};
\node(1m) at (3, 1) {$H^0(\Div^0)$};
\node(2m) at (6, 1) {$\mathcal{J}[1 - \zeta]$};
\node(3m) at (9, 1) {$0$.};
\draw[->] (mm) -- (0m);
\draw[->] (0m) -- (1m);
\draw[->] (1m) -- (2m);
\draw[->] (2m) -- (3m);
\end{tikzpicture}
\end{center}
The group $H^0(\Div^0)$ consists of the $\zeta$-fixed divisors, so it is
generated by $[P_{i} - \infty]$ and $\Norm(P - \infty)$ for arbitrary $P
\in \mathcal{C}(\overline{K})$. Observe that 
\begin{align*}
\divisor (x - x(P)) &= \Norm(P - \infty), \\
\divisor (y) &= \sum [P_{i} - \infty],
\end{align*}
so by \eqref{Equation:H0Princ}, $H^0(\Princ)$ is generated by $\sum
[P_{i} - \infty]$ and $\Norm(P - \infty)$ for arbitrary $P \in
\mathcal{C}(\overline{K})$. Therefore, the $[P_{i} - \infty]$ generate
$\mathcal{J}[1 - \zeta] \simeq H^0(\Div^0) / H^0(\Princ)$ and the only relation
is $\sum [P_{i} - \infty] = 0$.
\end{enumerate}

\item \label{Step:S611Psplits}
\textit{The exact sequence in the statement of
\autoref{Proposition:SuperellipticProp611Poonen} splits.}

The splitting is given by
\begin{center}
\begin{tikzpicture}
\node(0m) at (0, 1) {$(\mathbf{Z} / n \mathbf{Z})^{\mathcal{P}}$};
\node(1m) at (4, 1) {$\mathbf{Z} / n \mathbf{Z}$};
\node(0p) at (0, 0) {$(a_1, \ldots, a_d)$};
\node(1p) at (4, 0) {$d^{-1} \displaystyle\sum a_i$. };
\draw[->] (0m) -- (1m);
\draw[->] (0p) -- (1p);
\end{tikzpicture}
\end{center}
\end{enumerate}
\end{proof}

\begin{mycorollary}
\label{Corollary:RecallSuperellipticProp611Poonen}
The map 
\begin{center}
\begin{tikzpicture}
\node(0m) at (0, 1) {$(\mathbf{Z} / n \mathbf{Z})^d$};
\node(1m) at (3, 1) {$\mathcal{J}[1 - \zeta]$};
\node(0p) at (0, 0) {$\mathbf{a}$};
\node(1p) at (3, 0) {$\sum_{i = 1}^d a_i [P_i - \infty]$};
\draw[->] (0m) -- (1m);
\draw[|->] (0p) -- (1p);
\end{tikzpicture}
\end{center}
is surjective and its kernel is generated by $(1, \dots, 1)$.
\end{mycorollary}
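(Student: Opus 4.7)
The plan is to deduce this corollary as an immediate reformulation of \autoref{Proposition:SuperellipticProp611Poonen}. Identifying $(\mathbf{Z}/n\mathbf{Z})^{\mathcal{P}}$ with $(\mathbf{Z}/n\mathbf{Z})^d$ via the ordering $P_1, \ldots, P_d$ of the elements of $\mathcal{P}$, the map $\mathbf{a} \mapsto \sum_{i=1}^d a_i [P_i - \infty]$ is precisely the map $s$ of the proposition.

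Surjectivity of this map is then exactness at $\mathcal{J}[1 - \zeta]$ in the exact sequence. The kernel statement follows from exactness at $(\mathbf{Z}/n\mathbf{Z})^{\mathcal{P}}$: $\ker(s) = \operatorname{im}(\Delta)$, and since $\Delta(1) = (1, \ldots, 1)$ and $\Delta$ is injective, $\operatorname{im}(\Delta)$ is exactly the cyclic subgroup of $(\mathbf{Z}/n\mathbf{Z})^d$ generated by $(1, \ldots, 1)$.

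There is no real obstacle here; the corollary is just a convenient restatement of the proposition in the form needed for the subsequent discussion of how $D_{\mathbf{r}}$ varies with $\mathbf{r}$. The only thing to be careful about is making clear that the ordering of $\mathcal{P}$ is what allows us to write $(\mathbf{Z}/n\mathbf{Z})^{\mathcal{P}}$ as $(\mathbf{Z}/n\mathbf{Z})^d$, so that the two versions of $s$ agree on the nose.
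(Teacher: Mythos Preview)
Your proof is correct and matches the paper's approach exactly: the paper's own proof is simply the one line ``This is an immediate consequence of \autoref{Proposition:SuperellipticProp611Poonen},'' and you have spelled out precisely why.
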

\begin{proof}
This is an immediate consequence of
\autoref{Proposition:SuperellipticProp611Poonen}.
\end{proof}

The $n = 2$ case of the following theorem is Theorem 1.1 of
\cite{zarhin2020halves}.
\begin{mytheorem} \label{Theorem:AllDFormula}
For each $\mathbf{a} \in (\mathbf{Z} / n \mathbf{Z})^d$, 
\[
D_{\mathbf{r}} - \zeta^{a_{1} + \ldots + a_{d}} D_{\zeta^{-\mathbf{a}}
\mathbf{r}} \sim a_{1} P_1 + \cdots +
a_{d} P_d - \left(\sum a_{j}\right) \infty.
\]
\end{mytheorem}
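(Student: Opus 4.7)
The plan is to verify that both sides of the asserted linear equivalence lie in $\mathcal{J}[1 - \zeta]$, reduce to the base case $\mathbf{a} = e_j$ (for $e_j$ a standard basis vector) by induction, and prove the base case by constructing an explicit rational function on $\mathcal{C}$.

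Both sides lie in $\mathcal{J}[1 - \zeta]$: the right-hand side because each $[P_i - \infty]$ is $(1 - \zeta)$-torsion (since each $P_i$ is $\zeta$-fixed), and for the left-hand side I apply \autoref{Theorem:Main} to $\mathbf{r}$ to get $(1 - \zeta) D_\mathbf{r} \sim P - \infty$, and to $\zeta^{-\mathbf{a}} \mathbf{r}$ (whose associated $b$ is $\zeta^{-\sum a_i} b$ and associated point is $\zeta^{-\sum a_i} P$) to get $(1 - \zeta) D_{\zeta^{-\mathbf{a}} \mathbf{r}} \sim \zeta^{-\sum a_i} P - \infty$; multiplying the latter by $\zeta^{\sum a_i}$ and subtracting, using $\zeta \cdot \infty = \infty$, gives $(1 - \zeta)(D_\mathbf{r} - \zeta^{\sum a_i} D_{\zeta^{-\mathbf{a}} \mathbf{r}}) \sim 0$.

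To reduce to the base case, set $F_\mathbf{r}(\mathbf{a}) := [D_\mathbf{r} - \zeta^{\sum a_i} D_{\zeta^{-\mathbf{a}} \mathbf{r}}] \in \mathcal{J}[1 - \zeta]$ and induct on $\sum a_i$ using representatives $a_i \in \{0, \dots, n-1\}$; the case $\mathbf{a} = 0$ is trivial. Granting the base case $F_{\mathbf{r}'}(e_j) = [P_j - \infty]$ for every $j$ and $\mathbf{r}'$: setting $\mathbf{r}' := \zeta^{-\mathbf{a}} \mathbf{r}$ and noting $\zeta^{-(\mathbf{a} + e_j)} \mathbf{r} = \zeta^{-e_j} \mathbf{r}'$, the base case applied to $\mathbf{r}'$ gives $\zeta [D_{\zeta^{-e_j} \mathbf{r}'}] = [D_{\mathbf{r}'}] - [P_j - \infty]$, which substituted into the definition of $F_\mathbf{r}(\mathbf{a} + e_j)$ (using that $\zeta$ acts trivially on $\mathcal{J}[1 - \zeta]$) yields $F_\mathbf{r}(\mathbf{a} + e_j) = F_\mathbf{r}(\mathbf{a}) + [P_j - \infty]$; combined with the inductive hypothesis this completes the induction.

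The main obstacle is the base case $\mathbf{a} = e_j$, which asserts $D_\mathbf{r} - \zeta D_{\zeta^{-e_j} \mathbf{r}} \sim P_j - \infty$. A natural candidate rational function is $h := N_{1, n}(\mathbf{r}) / \zeta^{*} N_{1, n}(\zeta^{-e_j} \mathbf{r})$, which by \autoref{Proposition:Zeroes-And-Poles} has divisor $D_\mathbf{r} - \zeta D_{\zeta^{-e_j} \mathbf{r}} + \zeta^{-1} E_\mathbf{r} - E_{\zeta^{-e_j} \mathbf{r}}$; this reduces the problem to showing $\zeta^{-1} E_\mathbf{r} - E_{\zeta^{-e_j} \mathbf{r}} \sim \infty - P_j$. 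However, the principal relation $D_\mathbf{r} + \zeta^{-1} E_\mathbf{r} \sim 2g \infty$ coming from $\divisor N_{1, n}$ makes this equivalent to the original claim about $D$, so the candidate $h$ alone is insufficient. Instead, I would analyze the transformation of the matrix $N$ under the single-coordinate twist $r_j \to \zeta^{-1} r_j$, which modifies each $s_k$ by the additive correction $(\zeta^{-1} - 1) r_j s_{k-1}(\mathbf{r} \setminus \{r_j\})$ rather than by a simple scaling (as in the uniform twist $\mathbf{r} \to \zeta^{-1} \mathbf{r}$, where $A_\ell \to \zeta^{-\ell} A_\ell$ yields the clean identity $D_{\zeta^{-1} \mathbf{r}} = \zeta^{-d} D_\mathbf{r}$ as effective divisors). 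Completing the base case likely requires combining multiple entries $N_{1, k}$, or introducing a correction factor adapted to how $P_j$ sits in the vanishing locus of $N$.
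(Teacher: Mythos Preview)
Your reduction to the base case $\mathbf{a} = e_j$ by induction is correct and matches the paper's approach exactly. The observation that both sides lie in $\mathcal{J}[1-\zeta]$ is also fine.

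However, you have not proved the base case. You yourself recognize that the candidate $h = N_{1,n}(\mathbf{r})/\zeta^{*}N_{1,n}(\zeta^{-e_j}\mathbf{r})$ leads to a circular argument, and your subsequent suggestion to analyze how $N$ transforms under $r_j \mapsto \zeta^{-1} r_j$ is left as a sketch (``likely requires combining multiple entries $N_{1,k}$, or introducing a correction factor''). This is the genuine content of the theorem, and your proposal leaves it open. Trying to produce an explicit rational function witnessing $D_{\mathbf{r}} - \zeta D_{\zeta^{-e_j}\mathbf{r}} \sim P_j - \infty$ directly from the matrix entries is hard precisely because the single-coordinate twist perturbs all the $s_k$ additively rather than multiplicatively, so there is no clean conjugation identity of the type that underlies \autoref{Lemma:N-Relations}.

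The paper avoids this difficulty entirely by a deformation argument. Work in the family of superelliptic curves $\mathscr{C} \to U$ over the open subset $U \subset \mathbf{A}^d_K = \Spec K[r_1,\dots,r_d]$ where the $r_i^n$ are distinct, with relative jacobian $\mathscr{J}$. The map $\mathscr{J}[1-\zeta] \to U$ is \'etale, and both $\gamma(\mathbf{r}) = D_{\mathbf{r}} - \zeta D_{\zeta^{-e_j}\mathbf{r}}$ and $\gamma'(\mathbf{r}) = P_j - \infty$ are sections of it. On the hyperplane $H_j = \{ r_j = 0 \}$ we have $\zeta^{-e_j}\mathbf{r} = \mathbf{r}$, so $\gamma|_{H_j} = (1-\zeta)D_{\mathbf{r}} \sim P - \infty$, and since $r_j = 0$ forces $\alpha_j = 0$ and $b = 0$ we have $P = (0,0) = P_j$, hence $\gamma|_{H_j} = \gamma'|_{H_j}$. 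Two sections of an \'etale cover of a connected base that agree on a nonempty subscheme agree everywhere, which finishes the base case. The key idea you are missing is this rigidity of sections of an unramified cover: rather than building a function, one checks equality on the degenerate locus where the twist is trivial and then transports it.
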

\begin{proof}
By induction, it suffices to treat the case $\mathbf{a} = (1, 0, \ldots, 0)$. To
do so, we will first reformulate \autoref{Theorem:Main} in terms of a
family over an open subset of $\mathbf{A}_{K}^{d} = \Spec K[r_{1}, \ldots,
r_{d}]$.

Let $U$ be the open subset of $\mathbf{A}_{K}^{d} = \Spec K[r_{1}, \ldots,
r_{d}]$ given by removing every hyperplane of the form $r_i^n = r_j^n$. Let
$\mathscr{C}$ be the smooth proper family of superelliptic curves over $U$ given
by the equation
\[
y^{n} = \prod_{i = 1}^{d} \left( x + r_{i}^{n} \right).
\]
The morphism $\mathscr{C} \to U$ admits two sections of interest to us; the
``$\infty$ section'' sends $(r_1, \dots, r_d)$ to the point at $\infty$ on the
fiber and the ``$P$ section'' sends $(r_1, \dots, r_d)$ to the point $(0, r_1
\cdots r_d)$ on the fiber.  Let $\mathscr{J}$ be the relative jacobian of the
family $\mathscr{C}$ and embed $\mathscr{C}$ into $\mathscr{J}$ using the
Abel--Jacobi map induced by the $\infty$ section (denoted $\AJ_{\infty}$). We
seek to compare the two sections $D_{\mathbf{r}}$ and $\zeta
D_{\zeta^{-\mathbf{a}} \mathbf{r}}$ of the map $\mathscr{J} \to U$.  Here is a
diagram representing all the morphisms considered thus far.
\begin{center}
\begin{tikzpicture}
\node(C) at (-3, 3) {$\mathscr{C}$};
\node(J1) at (0, 3) {$\mathscr{J}$};
\node(J2) at (3, 3) {$\mathscr{J}$};
\node(U) at (0, 0) {$U$};
\draw[->] (C) -- (U);
\draw[->] (J1) -- (U);
\draw[->] (J2) -- (U);
\draw[->] (C) -- node[above]{$\AJ_{\infty}$} (J1);
\draw[->] (J1) -- node[above]{$1 - \zeta$} (J2);
\draw[->, dotted] (U) to[bend left=20] node {$\infty$} (C);
\draw[->, dotted] (U) to[bend left=45] node {$P$} (C);
\draw[->, dotted] (U) to[bend left=15] node[pos=0.6, above left] {\small
$D_{\mathbf{r}}$} (J1);
\draw[->, dotted] (U) to[bend right=15] node[pos=0.6, above right] {\small
$\zeta D_{\zeta^{-\mathbf{a}} \mathbf{r}}$} (J1);
\end{tikzpicture}
\end{center}
In this language, \autoref{Theorem:Main} says that the following two
morphisms are the same:
\begin{center}
\begin{tikzpicture}
\node(U00) at (0, 0) {$U$};
\node(U01) at (0, 1) {$U$};
\node(J10) at (2.5, 0) {$\mathscr{J}$};
\node(C11) at (2.5, 1) {$\mathscr{C}$};
\node(J20) at (5, 0) {$\mathscr{J}$};
\node(J21) at (5, 1) {$\mathscr{J}$};
\draw[->] (U01) -- node[above] {$P$} (C11);
\draw[->] (C11) --node[above]{$\AJ_{\infty}$} (J21);
\draw[->] (U00) -- node[above] {$D_{\mathbf{r}}$} (J10);
\draw[->] (J10) -- node[above]{$1 - \zeta$} (J20);
\end{tikzpicture}
\end{center}
The map $\mathscr{J}[1 - \zeta] \to U$ is smooth of relative dimension 0; it is
\'{e}tale. Consider the sections $\gamma, \gamma' \colon U \to \mathscr{J}[1 -
\zeta]$ given in coordinates by 
\begin{align*}
\gamma &\colon (r_{1}, \ldots, r_{d}) \mapsto D_{\mathbf{r}} - \zeta
D_{\zeta^{-\mathbf{a}} \mathbf{r}} \\
\gamma' &\colon (r_{1}, \ldots, r_{d}) \mapsto (0, 0) - \infty.
\end{align*}
We wish to show that $\gamma = \gamma'$. Let $H_{1}$ be the hyperplane of $U$
cut out by $r_{1} = 0$. On $H_1$, we know that $\zeta^{-\mathbf{a}} \mathbf{r} =
\mathbf{r}$, so 
\[
D_{\mathbf{r}} - \zeta D_{\zeta^{-\mathbf{a}} \mathbf{r}} = (1 - \zeta)
D_{\mathbf{r}} \sim (0, 0) - \infty.
\]
Therefore, the sections $\gamma, \gamma'$ agree on the nonempty closed subset
$H_1$. Every section of an unramified cover with connected base is uniquely
determined by its image on a single point (by Corollaire 5.3, Expos\'e 1 of SGA
1 \cite{Grothendieck_1971}), so $\gamma = \gamma'$.
\end{proof}

\begin{myremark}
\autoref{Corollary:RecallSuperellipticProp611Poonen} and
\autoref{Theorem:AllDFormula} together imply that our formula in
\autoref{Theorem:Main} produces every effective degree $g$ divisor $D$
satisfying $(1 - \zeta) D \sim P - \infty$. 
\end{myremark}

\section{Application to the intersection of \texorpdfstring{$(1 - \zeta)^{-1}
\AJ_{\infty}(\mathcal{C})$}{(1 - zeta)\^{}\{-1\}AJ\_\{infty\}(C)} and
\texorpdfstring{$\Theta$}{Theta}}
\label{Section:Intersection}
Let $\mathcal{C}' \colonequals (1 - \zeta)^{-1} \AJ_{\infty}(\mathcal{C})$.
\autoref{Theorem:Unique-Intersection-In-J-1-z} implies that the intersection of
$\mathcal{C}'$ and $\Theta$ is contained in $\mathcal{J}[1 - \zeta]$. In this
section, we will compute the intersection multiplicities at each intersection
point. We will work over the complex numbers; that is, $K = \mathbf{C}$.  We
identify points of $\mathcal{J}$ with degree zero divisor classes, and in this
section, we use $D$ to denote degree zero divisor classes (as opposed to
effective divisors).

We define gaps as Nakayashiki does in \cite{Nakay2016}.
\begin{mydefinition}
For each point $P$ on $\mathcal{C}$ and degree zero divisor class $D$, define
\[
G_{P}(D) = \{ k \in \mathbf{Z}_{\ge 0} \colon h^{0}(D + k P) =
h^{0}( D +  (k - 1) P ) \}
\]
to be the set of gaps for $D$ at $P$.
\end{mydefinition}

\begin{mylemma}
\label{Lemma:WhereTheGapsAre}
$G_{P}(D)$ is a subset of $[0, 2 g - 1]$ of size exactly $g$.
\end{mylemma}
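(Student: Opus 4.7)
The plan is to mimic the standard proof of the Weierstrass gap theorem (the $D = 0$ case) and adapt it to arbitrary degree zero $D$ via Riemann--Roch. The two key inputs are: (i) for any divisor class $E$ one has $h^{0}(E + P) - h^{0}(E) \in \{ 0, 1 \}$, which follows from the short exact sequence
\[
0 \to \mathcal{O}(E) \to \mathcal{O}(E + P) \to \mathcal{O}(E + P)|_{P} \to 0
\]
and the fact that the quotient sheaf is a skyscraper of length $1$ at $P$; and (ii) Riemann--Roch, which gives $h^{0}(F) = \deg F + 1 - g$ whenever $\deg F > 2 g - 2$. In particular, $k \mapsto h^{0}(D + k P)$ is non-decreasing in steps of $0$ or $1$, and $G_{P}(D)$ records exactly the steps of size $0$ for $k \ge 0$.

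First I would show $G_{P}(D) \subseteq [0, 2 g - 1]$. For $k \ge 2 g$, both $\deg(D + k P) = k$ and $\deg(D + (k - 1) P) = k - 1$ exceed $2 g - 2$, so Riemann--Roch applies to each, yielding
\[
h^{0}(D + k P) - h^{0}(D + (k - 1) P) = (k + 1 - g) - (k - g) = 1,
\]
so $k$ is not a gap.

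Next I would count the gaps in $[0, 2 g - 1]$ by telescoping. The number of non-gaps there is
\[
\sum_{k = 0}^{2 g - 1} \bigl( h^{0}(D + k P) - h^{0}(D + (k - 1) P) \bigr) = h^{0}(D + (2 g - 1) P) - h^{0}(D - P).
\]
Riemann--Roch gives $h^{0}(D + (2 g - 1) P) = (2 g - 1) + 1 - g = g$, while $h^{0}(D - P) = 0$ because $\deg(D - P) = -1 < 0$ forces $D - P$ to be non-effective. Hence exactly $g$ of the $2 g$ values $k \in [0, 2 g - 1]$ are non-gaps, leaving $|G_{P}(D)| = 2 g - g = g$. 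There is no substantive obstacle; the whole argument is bookkeeping on Riemann--Roch and the step-$\le 1$ estimate.
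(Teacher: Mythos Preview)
Your proof is correct and is precisely the Riemann--Roch argument the paper has in mind; the paper simply asserts the lemma is ``a straightforward consequence of the Riemann--Roch theorem'' without spelling out the step-by-at-most-one fact or the telescoping count, which you have supplied.
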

\begin{proof}
This is a straightforward consequence of the Riemann--Roch theorem.
\end{proof}

\begin{mydefinition}
\label{Definition:PartitionNakayashiki}
Suppose that $D \in \mathcal{J}$. By \autoref{Lemma:WhereTheGapsAre},
$G_{\infty}(D) = \{ b_{1}, \cdots, b_{g} \}$ for integers $0 \le b_{1} < b_{2} <
\ldots < b_{g} \le 2 g - 1$. As on page 5204 of \cite{Nakay2016}, define the
partition 
\[
\lambda_{D} \colonequals \left( b_{g} - (g - 1),\; b_{g - 1} - (g - 2),\;
\dots,\; b_{1} - 0 \right).
\]
Let $|\lambda_{D}|$ be the size of $\lambda_{D}$, i.e.,
\[
|\lambda_{D}| = \sum_{i = 1}^{g} \left( b_{i} - (i - 1) \right).
\]
\end{mydefinition}

\begin{mydefinition}
For each $D \in \mathcal{J}$, define $i(D)$ to be the intersection multiplicity
of $\mathcal{C}'$ and $\Theta$ at $D$.
\end{mydefinition}

The main theorem of this section is the following.
\begin{mytheorem}
\label{Theorem:Intersection-Multiplicities}
For each $D \in \mathcal{C}' \cap \Theta$,
\[
i(D) = |\lambda_{D}|.
\]
\end{mytheorem}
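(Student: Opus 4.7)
The plan is to reduce $i(D)$ to an order-of-vanishing statement for a theta-function-like section along a formal arc through $D$, and then identify this order with $|\lambda_{D}|$ via Nakayashiki's expansion from \cite{Nakay2016}. Since $1 - \zeta$ is a separable isogeny of $\mathcal{J}$ (as $\characteristic(K) \nmid n$), it is étale; hence $\mathcal{C}'$ is smooth at every $D \in \mathcal{C}' \cap \Theta$, and $1 - \zeta$ induces a formal isomorphism from a neighborhood of $D$ onto a neighborhood of $(1-\zeta) D = 0$. Fix a local parameter $t$ at $\infty$ and let $\phi \colon \Spec K[[t]] \to \mathcal{J}$ be the formal arc $t \mapsto \AJ_{\infty}(P(t))$, so $\phi(0) = 0$. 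By étale lifting, $\phi$ lifts uniquely to a formal arc $\psi$ with $\psi(0) = D$ and $(1-\zeta)\circ\psi = \phi$; this $\psi$ parametrizes $\mathcal{C}'$ near $D$. Then for any local equation $\vartheta$ of $\Theta$ at $D$, $i(D) = \mathrm{ord}_{t=0}(\vartheta \circ \psi)$.

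Next, I would pass to the universal cover $\mathbf{C}^{g} \to \mathcal{J}$, where $\zeta$ acts as a linear automorphism $\Lambda$ of $T_{0}\mathcal{J}$. The eigenvalues of $\Lambda$ are all nontrivial $n$-th roots of unity (the basis $x^{i - 1}\,dx/y^{j}$ with $1 \le j \le n - 1$ of $H^{0}(\Omega^{1}_{\mathcal{C}})$ consists of $\zeta$-eigenvectors with eigenvalue $\zeta^{-j}$), so $I - \Lambda$ is invertible and $\psi(t) = \widetilde{D} + (I - \Lambda)^{-1}\phi(t)$ in these coordinates, for a lift $\widetilde{D}$ of $D$. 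A key observation is that $\phi^{(k)}(0)$ lies in the $\zeta^{\alpha k}$-eigenspace of $\Lambda$, where $\alpha$ is the $\zeta$-weight of $t$ at $\infty$ (invertible mod $n$ since $\gcd(d, n) = 1$); in particular $\phi^{(k)}(0) = 0$ whenever $n \mid k$, because $\Lambda$ has no $1$-eigenspace. Consequently $(I - \Lambda)^{-1}$ rescales each nonzero $\phi^{(k)}(0)$ by the nonzero scalar $(1 - \zeta^{\alpha k})^{-1}$.

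Finally, Nakayashiki's theorem expands the sigma function $\sigma$ (cutting out $\Theta$ simply) as
\[
\sigma(\widetilde{D} + \phi(t)) = c \cdot t^{|\lambda_{D}|} + O(t^{|\lambda_{D}| + 1})
\]
with $c \neq 0$, the leading coefficient being a nonvanishing Schur polynomial of $\lambda_{D}$ assembled $\zeta$-equivariantly from the Taylor coefficients $\phi^{(k)}(0)$. Since replacing $\phi$ by $(I - \Lambda)^{-1}\phi$ only rescales each $\phi^{(k)}(0)$ by a nonzero weight-dependent scalar, and since Nakayashiki's Schur polynomial respects the $\zeta$-grading, its leading coefficient stays nonzero under this rescaling, giving $\mathrm{ord}_{t = 0}(\vartheta \circ \psi) = |\lambda_{D}|$ and hence $i(D) = |\lambda_{D}|$. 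The hard part will be making this last step precise: one must verify that the $\zeta$-equivariant structure of Nakayashiki's Schur expansion indeed survives the eigenvalue-rescaling by $(I - \Lambda)^{-1}$, using the $\zeta$-invariance of $\Theta$ (which follows from $\zeta(\infty) = \infty$, so that $\zeta \AJ_{\infty}(\mathcal{C}) = \AJ_{\infty}(\mathcal{C})$ and $\zeta W_{r} = W_{r}$ for every $r$).
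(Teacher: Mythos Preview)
Your local setup is right and in fact matches the paper: reducing $i(D)$ to the order of vanishing of a theta-section along the formal arc $\psi$, with $\psi = \widetilde{D} + (I-\Lambda)^{-1}\phi$ on the universal cover, is exactly the content of the paper's \autoref{Corollary:RestateiDvanishing} and \autoref{Lemma:UWiLocalCoordinateExpansionUp}. From Nakayashiki's expansion this gives $i(D)\ge |\lambda_D|$ immediately (the paper's \autoref{Proposition:IntMultAtleastInflectWt}).

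The gap is in your last step, where you assert that the rescaled leading Schur coefficient stays nonzero. It is true that $s_{\lambda_D}$ is $\zeta$-homogeneous: since $b_i\equiv d^{-1}w_i\pmod n$, every monomial of weight $|\lambda_D|$ in the $t_{w_i}$ has the same $\zeta$-weight. But $\zeta$-homogeneity does \emph{not} imply that the diagonal rescaling $t_{w_i}\mapsto (1-\zeta^{-b_i})^{-1}t_{w_i}$ acts on $s_{\lambda_D}$ by a single scalar. Two monomials $\prod t_{w_i}^{e_i}$ and $\prod t_{w_i}^{e_i'}$ with $\sum e_i w_i=\sum e_i' w_i=|\lambda_D|$ pick up factors $\prod(1-\zeta^{-b_i})^{-e_i}$ and $\prod(1-\zeta^{-b_i})^{-e_i'}$, which are generally different even though the $\zeta$-weights $\sum e_i b_i$ and $\sum e_i' b_i$ agree modulo $n$. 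Equivalently, in your $\phi^{(k)}(0)$ language: the $t^{|\lambda_D|}$-coefficient of $\sigma(\widetilde{D}+\phi(t))$ is a sum over partitions $(k_1,\dots,k_r)$ of $|\lambda_D|$, and each such term gets rescaled by $\prod_j(1-\zeta^{\alpha k_j})^{-1}$, which depends on the partition, not just on $|\lambda_D|$. So cancellation after rescaling is not ruled out by the $\zeta$-grading alone.

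The paper does not attempt to prove nonvanishing of this rescaled coefficient directly. Instead it closes the gap by a global counting argument: it computes $\sum_{D\in\mathcal{C}'\cap\Theta} i(D)$ as the cohomological intersection number $[\mathcal{C}']\smile[\Theta]$ (\autoref{Lemma:CohoIntersection}, \autoref{Corollary:I-Sum}) and computes $\sum_{D\in\mathcal{J}[1-\zeta]}|\lambda_D|$ by an explicit generating-function argument (\autoref{Lemma:W-Sum-Torsion}, \autoref{Lemma:W-Sum}); both sums equal $g(n+1)n^{d-1}/12$. Combined with the pointwise inequality $i(D)\ge|\lambda_D|$, this forces equality everywhere. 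If you want to salvage a direct local proof, you would need to exhibit an explicit nonvanishing for $s_{\lambda_D}$ evaluated at $t_{w_i}=w_i^{-1}(1-\zeta^{-b_i})^{-1}$, which appears to require more than the symmetry you invoke.
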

\begin{proof}
This theorem will be proved at the end of the section. 
\end{proof}

\begin{myremark}
We warn the reader that textbooks on Riemann surfaces \cite{Farkas_1992,
miranda1995algebraic} usually define gaps differently. For a point $P$ and
linear system $Q$ on $\mathcal{C}$, let $G'_{P}(Q)$ be the gaps for $Q$ at $P$
defined in Section VII.4 of \cite{miranda1995algebraic}. Let
$\omega_{\mathcal{C}}$ be the canonical bundle on $\mathcal{C}$, let
$\mathscr{O}_{\mathcal{C}}$ be the structure sheaf on $\mathcal{C}$, and let
$\mathcal{L}$ be the line bundle associated to $D$. Applying the Riemann--Roch
theorem shows that the relationship between the two notions of gaps is
\[
G_{P}(D) = \{ b \in \mathbf{Z}_{\ge 0} : b + 1 \in G'_{P}( \omega_{C} \otimes
\mathcal{L}^{-1} \otimes \mathscr{O}_{\mathcal{C}}(P)  ) \}
\]
and that $|\lambda_{D}|$ coincides with the inflectionary weight for $\omega_{C}
\otimes \mathcal{L}^{-1} \otimes \mathscr{O}_{\mathcal{C}}(\infty)$ at $\infty$. 
\end{myremark}

\begin{mydefinition}
Define the ring
\[
R \colonequals \mathbf{Z}[X_{1}^{\pm 1}, \dots, X_{d}^{\pm 1}] / (X_{1}^{n} -
1, \dots, X_{d}^{n} - 1, X_{1} \cdots X_{d} - 1).
\]
Then $R$ has a natural basis of the form $\{ X_{1}^{a_{1}} \cdots X_{d -
1}^{a_{d - 1}} \colon 0 \le a_{j} < n \}$. Define
\[
\pr_{a_{1}, \dots, a_{d - 1}} \colon R \to \mathbf{Z}
\]
to be the map that extracts the $X_{1}^{a_{1}} \cdots X_{d - 1}^{a_{d -
1}}$-coefficient. By abuse of notation, we also use $\pr_{a_{1}, \ldots, a_{d -
1}}$ to denote the same map, but tensored up to $\mathbf{Z}\llbracket T
\rrbracket$:
\[
\pr_{a_{1}, \ldots, a_{d - 1}} \colon R\llbracket T \rrbracket \to
\mathbf{Z}\llbracket T \rrbracket.
\]
Finally, define 
\begin{align*}
\rho &\colonequals (1 + T^{n} + T^{2n} + \cdots) \cdot \prod_{i = 1}^{d} (1 +
X_{i} T + \cdots + X_{i}^{n - 1} T^{n - 1}) &\in R\llbracket T \rrbracket \\
\rho_{a_{1}, \ldots, a_{d - 1}} &\colonequals \pr_{a_{1}, \ldots, a_{d -
1}}(\rho) &\in \mathbf{Z}\llbracket T \rrbracket.
\end{align*}
\end{mydefinition}

\begin{mylemma}
\label{Lemma:Torsion-Form}
Every element of $\mathcal{J}[1 - \zeta]$ has a unique representation of the
form
\[
[a_{1} P_{1} + \cdots + a_{d - 1} P_{d - 1} - \left( a_{1} + \cdots + a_{d -
1}\right) \infty]
\]
for some $0 \le a_{j} < n$. 
\end{mylemma}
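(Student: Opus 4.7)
The plan is to deduce this directly from \autoref{Corollary:RecallSuperellipticProp611Poonen}, which already gives the surjection
\[
(\mathbf{Z}/n\mathbf{Z})^d \twoheadrightarrow \mathcal{J}[1-\zeta], \qquad \mathbf{a} \mapsto \sum_{i=1}^d a_i [P_i - \infty],
\]
with kernel generated by $(1, 1, \dots, 1)$. All that remains is to select canonical coset representatives in which the last coordinate is zero and then rewrite the resulting divisor class.

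First, I would show that every coset of $\langle (1,\dots,1) \rangle$ in $(\mathbf{Z}/n\mathbf{Z})^d$ contains a unique representative of the form $(a_1, \dots, a_{d-1}, 0)$ with $0 \le a_j < n$. Existence is immediate: given $(b_1,\dots,b_d)$, subtract $b_d(1,\dots,1)$ to kill the last coordinate. For uniqueness, if $(a_1,\dots,a_{d-1},0)$ and $(a_1',\dots,a_{d-1}',0)$ lie in the same coset, then their difference is a multiple of $(1,\dots,1)$; but the last coordinate of the difference is $0$, which forces the multiple to be zero, hence $a_j = a_j'$ for all $j$. A count $n^{d-1} = |(\mathbf{Z}/n\mathbf{Z})^d|/n$ corroborates this.

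Next, I would translate the chosen representative $(a_1,\dots,a_{d-1},0)$ back through the map $s$ of \autoref{Proposition:SuperellipticProp611Poonen}, obtaining the divisor class $\sum_{i=1}^{d-1} a_i [P_i - \infty]$, which equals $[a_1 P_1 + \cdots + a_{d-1} P_{d-1} - (a_1 + \cdots + a_{d-1}) \infty]$. Combining surjectivity (existence of \emph{some} representation) with the uniqueness of the coset representative (uniqueness of the exponent tuple) gives exactly the statement of the lemma.

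There is essentially no obstacle here: the bulk of the work was already done in \autoref{Proposition:SuperellipticProp611Poonen} via the group cohomology argument, and the present lemma is a purely elementary bookkeeping statement about choosing canonical representatives in the quotient $(\mathbf{Z}/n\mathbf{Z})^d / \langle (1,\dots,1)\rangle$. The only subtlety worth spelling out is the uniqueness step, to emphasize that fixing $a_d = 0$ really does pin down each class.
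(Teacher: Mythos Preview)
Your proposal is correct and follows exactly the same approach as the paper, which simply records the lemma as an immediate consequence of \autoref{Corollary:RecallSuperellipticProp611Poonen}. You have merely spelled out the coset-representative bookkeeping that the paper leaves implicit.
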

\begin{proof}
This is an immediate consequence of
\autoref{Corollary:RecallSuperellipticProp611Poonen}.
\end{proof}

\begin{myproposition}
\label{Proposition:Power-Series-Multiplicities}
Suppose that $D = [a_{1} P_{1} + \cdots + a_{d - 1} P_{d - 1} - \left( a_{1} +
\cdots + a_{d - 1}\right) \infty]$ for some $0 \le a_{j} < n$. Then
\[
\rho_{a_{1}, \ldots, a_{d - 1}} = \sum_{i \in \mathbf{Z}_{\ge 0} \setminus
G_{\infty}(D)} T^{i}.
\]
\end{myproposition}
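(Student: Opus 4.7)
The plan is to decompose by the action of the automorphism $\zeta$. Since $\zeta$ fixes $\infty$ and each $P_j$, it preserves the representative
\[
D' \colonequals \sum_{j=1}^{d-1} a_j P_j - |a| \infty, \qquad |a| \colonequals \sum_j a_j,
\]
and therefore acts on $H^0(D' + k\infty)$ for every $k$. Because the $\zeta^q$-eigenspace of $K(\mathcal{C})$ is $K(x) \cdot y^q$, we obtain $H^0(D' + k\infty) = \bigoplus_{q=0}^{n-1} H^0(D' + k\infty)^{(q)}$, and I would compute the non-gap contribution of each eigenspace separately.

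First I would translate the divisor conditions on $g(x) y^q \in K(x) \cdot y^q$ into pole conditions on $g$ viewed as a rational function on $\mathbf{P}^1$. Using $v_{P_j}(x + \alpha_j) = n$ and the fact that $\pi \colon \mathcal{C} \to \mathbf{P}^1$ is totally ramified at $\infty$ with ramification index $n$, the conditions become: $g$ has at most a simple pole at $-\alpha_j$ when $j \in J(q) \colonequals \{j \in [1, d-1] : a_j + q \ge n\}$ and is regular at every other finite point, while its pole order at $\infty_{\mathbf{P}^1}$ is at most $\lfloor (k - dq - |a|)/n \rfloor$. Expanding $g$ in partial fractions $g = h(x) + \sum_{j \in J(q)} c_j/(x+\alpha_j)$ and applying a Vandermonde argument on the distinct $\alpha_j$, I would show that the set of achievable pole orders of $g$ at $\infty_{\mathbf{P}^1}$ equals $\{ k'' \in \mathbf{Z} : k'' \ge -c_q \}$, where $c_q \colonequals |J(q)|$. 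Translating back, the non-gap integers arising from the $q$th eigenspace form the arithmetic progression $\{N(q) + np : p \ge 0\}$ with $N(q) \colonequals |a| + dq - n c_q$, and since $\gcd(n, d) = 1$ the residues $N(q) \bmod n$ are distinct, so these progressions are disjoint and partition the full non-gap set.

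On the $\rho$ side, I would expand
\[
\rho = \sum_{p \ge 0,\, 0 \le b_j \le n-1} X_1^{b_1} \cdots X_d^{b_d}\, T^{np + \sum_j b_j}
\]
and apply $\pr_{a_1, \dots, a_{d-1}}$. Using $X_j^n = 1$ and $X_1 \cdots X_d = 1$ in $R$, the monomial $X_1^{b_1} \cdots X_d^{b_d}$ contributes precisely when $b_j \equiv a_j + b_d \pmod{n}$ for $j = 1, \dots, d-1$, which uniquely determines $(b_1, \dots, b_{d-1})$ from $b_d$. A short computation then gives $\sum_j b_j = |a| + d b_d - n c_{b_d} = N(b_d)$, so
\[
\rho_{a_1, \dots, a_{d-1}} = \sum_{q=0}^{n-1} \sum_{p \ge 0} T^{N(q) + np},
\]
which matches the non-gap generating function computed above.

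The hard part will be the Vandermonde/partial-fraction step controlling the achievable pole orders in each eigenspace; once it is in hand, the matching with $\rho$ is a straightforward arithmetic identification.
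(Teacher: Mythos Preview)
Your argument is correct and lands on the same combinatorial identity as the paper: both identify the non-gap set as the disjoint union $\bigcup_{q=0}^{n-1}\{N(q)+np : p\ge 0\}$ (the paper writes $N(q)+nm$ as $E(q,m)$) and compute $\rho_{a_1,\dots,a_{d-1}}$ by the same change of variables $b_j\equiv a_j+b_d\pmod n$. The genuine difference is in how the non-gap set is established. The paper only proves one inclusion directly, by exhibiting the explicit functions
\[
h_{e_d,m}=y^{e_d}(x+\alpha_d)^m\prod_{j=1}^{d-1}(x+\alpha_j)^{-\lfloor(a_j+e_d)/n\rfloor},
\]
and then appeals to a separate counting lemma (showing $\#(\mathbf{Z}_{\ge 0}\setminus\{E(e_d,m)\})=g$ via a fractional-part computation) to force equality. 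Your eigenspace decomposition with the partial-fraction/Vandermonde step determines the achievable pole orders in each $K(x)\cdot y^q$ exactly, so you get both inclusions at once and need no auxiliary count. Your route is thus more self-contained; the paper's route trades the Vandermonde argument for a concrete construction plus an elementary size check. Either way the ``hard part'' you flag is genuinely mild: since the $\alpha_j$ are distinct, the Vandermonde system $\sum_{j\in J(q)}c_j\alpha_j^i=0$ for $i<c_q$ has only the trivial solution, which is exactly what pins the minimal achievable pole order at $-c_q$.
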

\begin{proof}
Writing out an explicit sum for $\rho$ gives
\begin{align*}
\rho &= \left( \sum_{m \ge 0} (T^{n})^{m} \right) \left( \sum_{e_{1}, \ldots,
e_{d}=0}^{n - 1} X_{1}^{e_{1}} \cdots X_{d}^{e_{d}} T^{e_{1} + \cdots + e_{n}}
\right) \\
&= \sum_{e_{1}, \ldots, e_{d} \in [0, n - 1], m \ge 0} X_{1}^{e_{1}} \cdots
X_{d}^{e_{d}} T^{e_{1} + \cdots + e_{d} + n m}
\end{align*}
Using the relation $X_{1} X_{2} \cdots X_{d} = 1$, the above equals
\[
\rho = \sum_{e_{1}, \ldots, e_{d} \in [0, n - 1], m \ge 0} X_{1}^{e_{1} - e_{d}}
\cdots X_{d - 1}^{e_{d - 1} - e_{d}} T^{(e_{1} - e_{d}) + \ldots + (e_{d - 1} -
e_{d}) + n m + d e_{d}},
\]
Perform the change of variables $a_{j} \equiv e_{j} - e_{d} \pmod{n}$ where
$a_{j} \in [0, n - 1]$. Then using $e_{j} = a_{j} + e_{d} - n \left\lfloor
\frac{a_{j} + e_{d}}{n}\right\rfloor$ and $X_{1}^{n} = \cdots = X_{d - 1}^{n} =
1$ yields
\[
\rho = \sum_{a_{1}, \ldots, a_{d - 1}, e_{d} \in [0, n - 1], m \ge 0}
X_{1}^{a_{1}} \cdots X_{d - 1}^{a_{d - 1}} T^{\left( \sum a_{j} \right) + n
\left( m - \sum \left\lfloor \frac{a_{j} + e_{d}}{n} \right\rfloor \right) + d
e_{d}}
\]
and hence
\begin{equation}
\label{Equation:Explicit-Form-Rho-Aj}
\rho_{a_{1}, \ldots, a_{d - 1}} = \sum_{e_{d} = 0}^{n - 1} \sum_{m \ge 0}
T^{\left( \sum a_{j} \right) + n
\left( m - \sum \left\lfloor \frac{a_{j} + e_{d}}{n} \right\rfloor \right) + d
e_{d}}
\end{equation}
For each $e_{d} \in [0, n - 1]$ and $m \ge 0$, define
\begin{equation}
\label{Equation:DefinitionOfEedm}
E(e_{d}, m) \colonequals \left( \sum a_{j} \right) + n \left( m - \sum \left\lfloor
\frac{a_{j} + e_{d}}{n} \right\rfloor \right) + d e_{d}
\end{equation}
to be the exponents arising in \eqref{Equation:Explicit-Form-Rho-Aj}. Observe
that $E(e_{d}, m)$ uniquely determines $e_{d}$ and $m$:
\begin{equation}
\label{Equation:Eedmmodn}
e_{d} \equiv d^{-1} \left( E(e_{d}, m) - \sum a_{j}\right) \pmod{n}
\end{equation}
uniquely determines $e_{d}$, and then
\[
m = \frac{1}{n} \left(E(e_{d}, m) - \left( \sum a_{j} \right) - d e_{d} \right)
+ \sum \left\lfloor \frac{a_{j} + e_{d}}{n} \right\rfloor
\]
is uniquely determined by $e_{d}$ and $E(e_{d}, m)$. Therefore, no terms in
\eqref{Equation:Explicit-Form-Rho-Aj} combine.

For each pair $(e_{d}, m)$, the function 
\[
h_{e_{d}, m} \colonequals y^{e_{d}} (x + \alpha_{d})^{m} \prod_{j = 1}^{d - 1}
(x + \alpha_{j})^{- \left \lfloor \frac{a_{j} + e_{d}}{n} \right \rfloor}
\]
satisfies
\begin{align*}
\divisor(h_{e_{d}, m}) &= (n m + e_{d}) P_{d} + \sum_{j = 1}^{d - 1} \left(
e_{d} - n \left\lfloor \frac{a_{j} + e_{d}}{n} \right\rfloor \right) P_{j} \\
&\quad- \left( n \left( m - \sum_{j = 1}^{d - 1} \left\lfloor \frac{a_{j} +
e_{d}}{n} \right\rfloor \right) + d e_{d} \right) \infty
\end{align*}
and hence
\begin{align*}
&\divisor(h_{e_{d}}, m) + \sum_{i = 1}^{d - 1} a_{i} (P_i - \infty)  \\
&= (n m +
e_{d}) P_{d} +  \sum_{j = 1}^{d - 1} \left( a_{j} + e_{d} - n \left\lfloor
\frac{a_{j} + e_{d}}{n} \right\rfloor \right) P_{j} \\
&\quad- \left( \left( \sum_{j = 1}^{d - 1} a_{j} \right) + n \left( m - \sum_{j =
1}^{d - 1} \left\lfloor \frac{a_{j} + e_{d}}{n} \right\rfloor \right) + d e_{d}
\right) \infty \\
&= (n m + e_{d}) P_{d} + \left( \sum_{j = 1}^{d - 1} \left( a_{j} + e_{d} - n
\left\lfloor \frac{a_{j} + e_{d}}{n} \right\rfloor \right) P_{j} \right) -
E(e_{d}, m) \infty,
\end{align*}
so  $E(e_{d}, m) \in \mathbf{Z}_{\ge 0} \setminus G_{\infty}(D)$.

To finish, we must check the reverse containment $\mathbf{Z}_{\ge 0} \setminus
G_{\infty}(D) \subseteq \{ E(e_{d}, m) : e_{d} \in [0, n - 1], m \ge 0\}$. By
\autoref{Lemma:NumberOfGapsIsG} below, $\# \{ E(e_{d}, m) : e_{d} \in [0, n -
1], m \ge 0\} = g$, so we are done.
\end{proof}
\begin{mylemma}
\label{Lemma:NumberOfGapsIsG}
Suppose that $a_{1}, \cdots, a_{d - 1} \in [0, n - 1]$. For each $e_{d} \in [0,
n - 1]$ and $m \ge 0$, define $E(e_{d}, m)$ as in
\eqref{Equation:DefinitionOfEedm}. Define
\[
S \colonequals \left\{ E(e_{d}, m) \colon e_{d} \in [0, n - 1], m \ge 0
\right\}.
\]
Then $\mathbf{Z}_{\ge 0} \setminus S$ is finite and has size exactly $g$.
\end{mylemma}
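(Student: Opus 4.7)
The plan is to organize the set $S$ by residue classes modulo $n$. First I would observe that the formula \eqref{Equation:DefinitionOfEedm} is linear in $m$ with coefficient exactly $n$, so for fixed $e_d$ the values $\{E(e_d, m) : m \ge 0\}$ form the arithmetic progression $E(e_d, 0), E(e_d, 0) + n, E(e_d, 0) + 2n, \ldots$. Moreover the residue $E(e_d, m) \bmod n$ depends only on $e_d$ and equals $\bigl(\sum a_j + d e_d\bigr) \bmod n$. Since $\gcd(d, n) = 1$, as $e_d$ runs over $[0, n-1]$ these residues hit every class in $\mathbf{Z}/n$ exactly once, just as in \eqref{Equation:Eedmmodn}. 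Thus for each $e_d \in [0, n-1]$, the intersection of $S$ with the corresponding residue class is exactly $\{E(e_d, 0), E(e_d, 0) + n, E(e_d, 0) + 2n, \ldots\}$, and the complement in that residue class consists of the $\lfloor E(e_d, 0)/n \rfloor$ nonnegative integers below $E(e_d, 0)$ congruent to $E(e_d, 0)$ mod $n$. In particular $\mathbf{Z}_{\ge 0} \setminus S$ is finite, and
\[
\# \bigl(\mathbf{Z}_{\ge 0} \setminus S\bigr) = \sum_{e_d = 0}^{n - 1} \frac{E(e_d, 0) - r(e_d)}{n},
\]
where $r(e_d) \in [0, n-1]$ denotes $E(e_d, 0) \bmod n$.

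Next I would simplify $E(e_d, 0)$. Using $(a_j + e_d) - n \lfloor (a_j + e_d)/n \rfloor = (a_j + e_d) \bmod n$, one gets
\[
E(e_d, 0) = \sum_{j=1}^{d-1} a_j + d e_d - n \cdot c(e_d), \qquad c(e_d) \colonequals \#\{\, j \in [1, d-1] : a_j + e_d \ge n \,\}.
\]
Because $\{r(e_d) : e_d \in [0, n-1]\}$ is a permutation of $[0, n-1]$, we have $\sum_{e_d} r(e_d) = n(n-1)/2$. For the sum of $c(e_d)$, I would swap the order of summation:
\[
\sum_{e_d = 0}^{n - 1} c(e_d) = \sum_{j = 1}^{d - 1} \# \{ e_d \in [0, n-1] : e_d \ge n - a_j \} = \sum_{j = 1}^{d - 1} a_j,
\]
since $a_j \in [0, n-1]$.

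Combining these,
\[
\sum_{e_d = 0}^{n - 1} E(e_d, 0) = n \sum_j a_j + d \cdot \frac{n(n-1)}{2} - n \sum_j a_j = \frac{d n (n - 1)}{2},
\]
so
\[
\# \bigl(\mathbf{Z}_{\ge 0} \setminus S\bigr) = \frac{1}{n} \left( \frac{d n (n - 1)}{2} - \frac{n(n-1)}{2} \right) = \frac{(n - 1)(d - 1)}{2} = g,
\]
as required. The substantive step is the bookkeeping in the last paragraph; everything else is a direct consequence of the linearity of $E(e_d, m)$ in $m$ and the coprimality of $d$ and $n$, so I do not expect a serious obstacle.
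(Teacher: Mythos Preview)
Your proof is correct and follows essentially the same route as the paper: decompose $S$ into residue classes modulo $n$, use $\gcd(d,n)=1$ to see that the classes are hit bijectively by $e_d \in [0,n-1]$, and then sum the per-class deficits. The only cosmetic difference is in the bookkeeping of the final sum: the paper writes $E(e_d,0) = n\sum_{j=1}^{d}\{(a_j+e_d)/n\}$ (with the convention $a_d=0$) and manipulates fractional parts, whereas you introduce the integer carry count $c(e_d)$ and sum directly. One small point you leave implicit is that $E(e_d,0)\ge 0$, which is needed for the deficit formula $(E(e_d,0)-r(e_d))/n$ to actually count the missing nonnegative integers in that class; this is immediate from your own simplification, since $E(e_d,0)=\sum_{j=1}^{d-1}\bigl((a_j+e_d)\bmod n\bigr)+e_d$ is a sum of nonnegative terms.
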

\begin{proof}

For any real number $x$, define $\{ x \} \colonequals x - \lfloor x \rfloor$.
Let $a_d = 0$ and $a = \sum_{j = 1}^{d} a_j$. For each $e \in [0, n - 1]$,
\eqref{Equation:Eedmmodn} implies that the subset of $S$ congruent to $a + d e
\pmod{n}$ is
\begin{align*}
S_e &\colonequals \{ E(e, m) : m \ge 0 \} \\
&= E(e, 0) + n \mathbf{Z}_{\ge 0} \\
&= \left( \left( \sum_{j = 1}^{d - 1} a_{j} \right) - n \left( 
\sum_{j = 1}^{d - 1} \left\lfloor \frac{a_{j} + e}{n} \right\rfloor
\right) + d e \right) + n \mathbf{Z}_{\ge 0} \\
&= \left(\sum_{j = 1}^{d} \left( a_{j} + e  - n \left\lfloor \frac{a_{j} + e}{n}
\right\rfloor \right)\right) + n \mathbf{Z}_{\ge 0} &\text{(since } a_{d} = 0
\text{)} \\
&= \left(n \sum_{j = 1}^{d} \left\{ \frac{a_{j} + e}{n} \right\}\right) + n
\mathbf{Z}_{\ge 0}.
\end{align*}
Therefore, 
\[
\# \left(\left( \mathbf{Z}_{\ge 0} \cap \left( a + d e + n \mathbf{Z} \right)
\right) \setminus S_{e} \right) = \left\lfloor \sum_{j = 1}^{d} \left\{
\frac{a_{j} + e}{n} \right\} \right\rfloor
\]
and hence
\begin{align}
\# \left(\mathbf{Z}_{\ge 0} \setminus S\right) &= \sum_{e = 0}^{n - 1}
\#\left(\left( \mathbf{Z}_{\ge 0} \cap \left( a + d e + n \mathbf{Z} \right)
\right) \setminus S_{e}\right)  \nonumber \\
&= \sum_{e = 0}^{n - 1} \left\lfloor \sum_{j = 1}^{d} \left\{ \frac{a_{j} +
e}{n} \right\}\right\rfloor \nonumber \\
&= \sum_{e = 0}^{n - 1} \left\lfloor \sum_{j = 1}^{d} \frac{a_{j} + e}{n} -
\left\lfloor \frac{a_{j} + e}{n} \right\rfloor\right\rfloor \nonumber \\
&= \sum_{e = 0}^{n - 1} \left( \left\lfloor \frac{a + d e}{n} \right\rfloor -
\sum_{j = 1}^{d} \left\lfloor \frac{a_{j} + e}{n} \right\rfloor \right)
\nonumber \\
&= \sum_{e = 0}^{n - 1} \left( \left( \frac{a + d e}{n} - \left\{ \frac{a + d
e}{n} \right\}\right)  - \sum_{j = 1}^{d} \left(\frac{a_{j} + e}{n} - \left\{
\frac{a_{j} + e}{n} \right\}\right) \right) \nonumber \\
&= \sum_{e = 0}^{n - 1} \left(  - \left\{ \frac{a + d e}{n} \right\}  + \sum_{j
= 1}^{d} \left\{ \frac{a_{j} + e}{n} \right\} \right) \nonumber \\
&= - \left(\sum_{e = 0}^{n - 1} \left\{ \frac{a + d e}{n} \right\}\right) +
\left(\sum_{j = 1}^{d} \sum_{e = 0}^{n - 1} \left\{ \frac{a_{j} + e}{n}
\right\}\right) \label{Equation:ExpandSizeOfComplementS}.
\end{align}
Note that the numbers $\{ a + d e \colon e \in [0, n - 1]\}$ hit each residue
class modulo $n$ exactly once. The same goes for $\{ a_{j} + e \colon e \in [0,
n - 1] \}$.  Hence,
\begin{equation}
\label{Equation:TakeAllFractionalPartsCircle}
\sum_{e = 0}^{n - 1} \left\{ \frac{a + d e}{n} \right\} = \sum_{e = 0}^{n - 1}
\left\{ \frac{a_{j} + e}{n} \right\} = \frac{0}{n} + \frac{1}{n} + \cdots +
\frac{n - 1}{n} = \frac{n - 1}{2},
\end{equation}
and substituting \eqref{Equation:TakeAllFractionalPartsCircle} into
\eqref{Equation:ExpandSizeOfComplementS} yields
\[
\# \left(\mathbf{Z}_{\ge 0} \setminus S\right) = -\left( \frac{n - 1}{2} \right)
+ \sum_{j = 1}^{d} \left( \frac{n - 1}{2} \right) = \frac{(n - 1)(d - 1)}{2} =
g. 
\]
\end{proof}

The next step is to extract $|\lambda_{D}|$ from $\rho_{a_{1}, \ldots, a_{d -
1}}$, which we will do in \autoref{Corollary:Power-Series-Weight}.

\begin{mydefinition}
For $h \in \mathbf{Z}\llbracket T \rrbracket$ and $i \ge 0$, write $[T^i] h$ to
denote the $T^i$-coefficient of $h$.
\end{mydefinition}

\begin{mycorollary}
\label{Corollary:Power-Series-Weight}
Keeping the notation of \autoref{Proposition:Power-Series-Multiplicities}, we
have
\[
|\lambda_{D}| + \frac{g(g - 1)}{2} = [T^{2 g}] \{ T^{2} (1 + T + \cdots)^{2}
\rho_{a_{1}, \ldots, a_{d - 1}} \}.
\]
\end{mycorollary}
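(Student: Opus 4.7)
The plan is to unpack the power-series identity into a sum over non-gaps, then compare with the definition of $|\lambda_D|$. By \autoref{Proposition:Power-Series-Multiplicities}, $\rho_{a_1,\ldots,a_{d-1}} = \sum_{i \in \mathbf{Z}_{\ge 0} \setminus G_\infty(D)} T^i$. The generating function $T^2(1 + T + \cdots)^2$ expands as $\sum_{m \ge 2}(m - 1)T^m$, so
\[
[T^{2g}]\bigl\{T^2(1+T+\cdots)^2\,\rho_{a_1,\ldots,a_{d-1}}\bigr\} = \sum_{\substack{i \in \mathbf{Z}_{\ge 0} \setminus G_\infty(D) \\ i \le 2g - 2}} (2g - 1 - i).
\]

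The key observation, which I would establish first, is that $G_\infty(D) \subseteq [0, 2g - 1]$ and $\#G_\infty(D) = g$ (by \autoref{Lemma:WhereTheGapsAre}), so every integer $i \ge 2g$ lies in the complement. Writing $[0, 2g-1] \setminus G_\infty(D) = \{c_1 < \cdots < c_g\}$, the terms with $i \ge 2g$ contribute nothing (since $2g - 1 - i < 0$ is outside the sum range) and the case $c_j = 2g - 1$ contributes zero as well. Therefore
\[
[T^{2g}]\bigl\{T^2(1+T+\cdots)^2\,\rho_{a_1,\ldots,a_{d-1}}\bigr\} = \sum_{j = 1}^{g}(2g - 1 - c_j) = g(2g-1) - \sum_{j=1}^g c_j.
\]

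Next I would use the partition $G_\infty(D) \sqcup \{c_1,\ldots,c_g\} = [0, 2g - 1]$ to get
\[
\sum_{i=1}^g b_i + \sum_{j=1}^g c_j = \binom{2g}{2} = g(2g - 1),
\]
so $\sum_i b_i = g(2g-1) - \sum_j c_j$. Combining with \autoref{Definition:PartitionNakayashiki}, which gives $|\lambda_D| = \sum_{i=1}^g (b_i - (i-1)) = \sum_i b_i - \frac{g(g-1)}{2}$, we conclude
\[
|\lambda_D| + \frac{g(g-1)}{2} = \sum_{i = 1}^g b_i = g(2g-1) - \sum_{j=1}^g c_j = [T^{2g}]\bigl\{T^2(1+T+\cdots)^2\,\rho_{a_1,\ldots,a_{d-1}}\bigr\},
\]
completing the proof. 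There is no real obstacle here — once Proposition \ref{Proposition:Power-Series-Multiplicities} is in hand, everything reduces to the combinatorial identity above; the only thing to be careful about is confirming that the complement of $G_\infty(D)$ in $\mathbf{Z}_{\ge 0}$ contains the full tail $[2g,\infty)$, which is where Riemann--Roch (via \autoref{Lemma:WhereTheGapsAre}) is used.
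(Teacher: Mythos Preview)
Your proof is correct and follows essentially the same route as the paper: both invoke \autoref{Proposition:Power-Series-Multiplicities}, expand $T^{2}(1+T+\cdots)^{2}$ as $\sum_{m\ge 2}(m-1)T^{m}$, reduce the $T^{2g}$ coefficient to $\sum_{i\in[0,2g-1]\setminus G_{\infty}(D)}(2g-1-i)$, and then use the complement identity together with $|\lambda_{D}|=\sum_{i} b_{i}-\tfrac{g(g-1)}{2}$ to finish. The only cosmetic difference is that you name the non-gaps $c_{j}$ explicitly, whereas the paper passes directly from the non-gap sum to $\sum_{i\in G_{\infty}(D)} i$ via subtraction from $\sum_{i=0}^{2g-1} i$.
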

\begin{proof}
We have
\begin{align*}
&[T^{2 g}] \{ T^{2} (1 + T + \cdots)^{2} \rho_{a_{1}, \ldots, a_{d - 1}} \}  \\
&= [T^{2 g - 1}] \{ T (1 + T + \cdots)^{2} \rho_{a_{1}, \ldots, a_{d - 1}} \} \\
&= [T^{2 g - 1}] \{ (T + 2 T^2 + 3 T^3 + \ldots) \rho_{a_{1}, \ldots, a_{d - 1}}
\} \\
&= \sum_{i \in [0, 2 g - 1] \setminus G_{\infty}(D)} (2 g - 1 - i)
&&\text{(by \autoref{Proposition:Power-Series-Multiplicities})} \\
&= g (2 g - 1) - \sum_{i \in [0, 2 g - 1] \setminus G_{\infty}(D)} i \\
&= g (2 g - 1) - \left( \left(\sum_{i = 0}^{2g -1} i\right) - \left( \sum_{i \in
G_{\infty}(D)} i \right) \right)  \\
&= \sum_{i \in  G_{\infty}(D)} i \\
&= |\lambda_{D}| + \frac{g (g - 1)}{2}. &&
\end{align*}
\end{proof}

\begin{mylemma}
\label{Lemma:W-Sum-Torsion}
We have
\[
\sum_{D \in \mathcal{J}[1 - \zeta]} |\lambda_{D}| = \frac{g(n + 1) n^{d -
1}}{12}.
\]
\end{mylemma}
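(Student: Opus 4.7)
The plan is to apply Corollary~\ref{Corollary:Power-Series-Weight} to each $D \in \mathcal{J}[1-\zeta]$ and sum. By Lemma~\ref{Lemma:Torsion-Form}, the $n^{d-1}$ divisor classes in $\mathcal{J}[1-\zeta]$ are parametrized by tuples $(a_1,\ldots,a_{d-1}) \in \{0,\ldots,n-1\}^{d-1}$, and summing the identity of that corollary yields
\[
\sum_{D \in \mathcal{J}[1-\zeta]} |\lambda_D| \;=\; [T^{2g}]\!\left( \frac{T^2}{(1-T)^2}\, S(T) \right) \;-\; n^{d-1}\cdot\frac{g(g-1)}{2},
\]
where $S(T) \colonequals \sum_{\mathbf{a}} \rho_{\mathbf{a}}(T)$. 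So the problem reduces to computing $S(T)$ and extracting a single coefficient.

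The next step is to observe that since $\{X_1^{a_1}\cdots X_{d-1}^{a_{d-1}} : 0 \le a_j < n\}$ is a $\mathbf{Z}$-basis of $R$, summing the coefficient projections $\pr_{\mathbf{a}}$ across all $\mathbf{a}$ is the same as evaluating at $X_1 = \cdots = X_{d-1} = 1$; the relation $X_1 \cdots X_d = 1$ then forces $X_d = 1$ as well. Hence
\[
S(T) \;=\; \rho(1,\ldots,1;T) \;=\; \frac{(1+T+\cdots+T^{n-1})^d}{1-T^n} \;=\; \frac{(1-T^n)^{d-1}}{(1-T)^d}.
\]
Setting $p(T) \colonequals (1+T+\cdots+T^{n-1})^{d-1}$, which has degree exactly $2g = (n-1)(d-1)$, the coefficient to compute becomes
\[
[T^{2g}]\frac{T^2 p(T)}{(1-T)^3} \;=\; [T^{2g-2}]\frac{p(T)}{(1-T)^3} \;=\; \sum_{j=0}^{2g-2} p_j \binom{2g-j}{2}.
\]

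The trick at this point is that $p(T)$ is palindromic, so $p_j = p_{2g-j}$; the substitution $i = 2g-j$ rewrites the last sum as $\sum_{i=2}^{2g} p_i \binom{i}{2} = \tfrac{1}{2}\sum_i i(i-1)p_i = \tfrac{1}{2}p''(1)$. I would then compute $p''(1)$ via the chain rule: with $q(T) = 1+T+\cdots+T^{n-1}$ one has $q(1) = n$, $q'(1) = n(n-1)/2$, $q''(1) = n(n-1)(n-2)/3$, and $p = q^{d-1}$ gives
\[
p''(1) \;=\; (d-1)(d-2)\, n^{d-3}\, q'(1)^2 \;+\; (d-1)\, n^{d-2}\, q''(1).
\]
Subtracting the correction $n^{d-1}g(g-1)/2$ and simplifying using $g = (n-1)(d-1)/2$ collapses to $g(n+1)n^{d-1}/12$. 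The conceptual content lies in the substitution $X_i = 1$ and the palindrome trick; the main obstacle is just bookkeeping the final arithmetic carefully, where several messy terms unexpectedly combine into the clean factor $n+1$.
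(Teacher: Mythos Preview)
Your proof is correct and follows essentially the same route as the paper: sum \autoref{Corollary:Power-Series-Weight} over $\mathcal{J}[1-\zeta]$, evaluate $\rho$ at $X_i = 1$, and reduce to derivatives of $(1+T+\cdots+T^{n-1})^{d-1}$ at $T=1$. Your palindrome observation $p_j = p_{2g-j}$ is a mild shortcut over the paper's version, letting you compute only $p''(1)$ rather than also $p(1)$ and $p'(1)$.
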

\begin{proof}
Using \autoref{Lemma:Torsion-Form} to sum both sides of
\autoref{Corollary:Power-Series-Weight} over all $D \in\mathcal{J} [1 - \zeta]$
yields 
\[
\left(\sum_{D \in \mathcal{J}[1 - \zeta]} |\lambda_{D}| \right) + \frac{g(g - 1) n^{d - 1}}{2} = [T^{2 g}] \{ T^{2}(1 + T +
\cdots)^{2} \rho|_{X_{1} = \cdots = X_{n} = 1} \},
\]
and since 
\begin{align*}
\rho|_{X_{1} = \cdots = X_{n} = 1} &= (1 + T^{n} + T^{2 n} + \cdots) (1 + T +
\cdots + T^{n - 1})^{d} \\
&= (1 + T + T^{2} + \cdots)(1 + T + \cdots + T^{n - 1})^{d - 1},
\end{align*}
we have
\begin{equation}
\label{Equation:Total-Weight-Coefficient}
\left(\sum_{D \in \mathcal{J}[1 - \zeta]} |\lambda_{D}| \right) + \frac{g(g - 1) n^{d - 1}}{2} = [T^{2 g}] \{ T^{2}(1 + T +
\cdots)^{3} (1 + T + \cdots + T^{n - 1})^{d - 1} \}.
\end{equation}
Define $c_{i}$ so that
\begin{equation}
\label{Equation:Ci-Def}
\sum_{i = 0}^{(n - 1)(d - 1)} c_{i} T^{i} = (1 + T + \cdots + T^{n - 1})^{d -
1}.
\end{equation}
Since $2 g = (n - 1)(d - 1)$, 
\begin{equation}
\label{Equation:Coefficient-To-Sum}
[T^{2 g}] \{ T^{2} (1 + T + \cdots)^{3} (1 + T + \cdots + T^{n - 1})^{d - 1}
\} = \sum_{i = 0}^{2 g} \binom{2 g - i}{2} c_{i}.
\end{equation}
The 0th, 1st, and 2nd derivatives of \eqref{Equation:Ci-Def} are
\begin{align*}
&\sum_{i = 0}^{2 g} c_{i} T^{i} = (1 + T + \cdots + T^{n - 1})^{d - 1} \\
&\sum_{i = 0}^{2 g} i c_{i} T^{i - 1} = (d - 1)(1 + 2 T + 3 T^{2} + \ldots + (n
- 1)
T^{n - 2})(1 + T + \cdots + T^{n - 1})^{d - 2} \\
&\sum_{i = 0}^{2 g} i (i - 1) c_{i} T^{i - 2} \\
&= (d - 1)(2 + 6 T + \ldots + (n - 1)
(n - 2) T^{n - 3})(1 + T + \cdots + T^{n - 1})^{d - 2} \\
&+ (d - 1)(d - 2) (1 + 2 T + 3 T^{2} + \ldots + (n - 1)
T^{n - 2})^{2}(1 + T + \cdots + T^{n - 1})^{d - 3}
\end{align*}
Substituting $T = 1$ everywhere above gives
\begin{align*}
\sum_{i = 0}^{2 g} c_{i} &= n^{d - 1} \\
\sum_{i = 0}^{2 g} i c_{i} &= (d - 1) \left( \frac{n - 1}{2} \right) n^{d - 1}
\\ &= g n^{d - 1} \\
\sum_{i = 0}^{2 g} i (i - 1) c_{i} &= (d - 1) \left( \frac{(n-1)(n-2)}{3}
\right) n^{d - 1} + (d - 1)(d - 2) \left( \frac{n - 1}{2} \right)^{2} n^{d - 1}
\\
&= g \left( g + \frac{n - 5}{6} \right) n^{d - 1},
\end{align*}
so the right hand side of \eqref{Equation:Coefficient-To-Sum} is
\begin{align}
\sum_{i = 0}^{2 g} \binom{2 g - i}{2} c_{i} &= \frac{1}{2} \left(\sum_{i = 0}^{2
g} (i^{2} - i) c_{i}\right) - (2 g - 1) \left(\sum_{i = 0}^{2 g} i c_{i}\right)
+ g (2 g - 1) \left(\sum_{i = 0}^{2 g} c_{i}\right) \nonumber \\ 
&= \left( \frac{1}{2} g \left( g + \frac{n - 5}{6} \right) - (2 g - 1) g + g (2
g - 1)\right) n^{d - 1} \nonumber \\
&= \left( \frac{1}{2} g^{2} + \frac{g (n - 5)}{12} \right) n^{d - 1}
\label{Equation:Sum-Binomials}.
\end{align}
Combining \eqref{Equation:Total-Weight-Coefficient},
\eqref{Equation:Coefficient-To-Sum}, and \eqref{Equation:Sum-Binomials} finishes
the proof.
\end{proof}

\begin{mylemma}
\label{Lemma:No-Weight-Torsion}
If $D \not\in \Theta$, then $|\lambda_{D}| = 0$.
\end{mylemma}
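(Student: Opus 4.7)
The plan is to translate the hypothesis $D \not\in \Theta$ into a vanishing statement about $h^{0}$, then push this down into the definition of $G_{\infty}(D)$ and the partition $\lambda_{D}$.

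First I would recall that $\Theta = W_{g-1}$ is by definition the image of $\mathcal{C}^{g-1}$ under Abel--Jacobi, so a degree-zero divisor class $D$ lies in $\Theta$ if and only if $D + (g-1)\infty$ is linearly equivalent to an effective divisor, i.e.\ if and only if $h^{0}(D + (g-1)\infty) \geq 1$. Hence the hypothesis $D \notin \Theta$ is equivalent to
\[
h^{0}(D + (g-1)\infty) = 0.
\]

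Next I would use the elementary monotonicity of $k \mapsto h^{0}(D + k\infty)$: an effective divisor representative of $D + (k-1)\infty$ gives one of $D + k\infty$ by adding $\infty$, so $h^{0}(D + k\infty) \geq h^{0}(D + (k-1)\infty)$. Combined with the vanishing above, this forces
\[
h^{0}(D + k\infty) = 0 \quad \text{for all } k \leq g - 1.
\]
By the very definition of gaps, this shows $\{0, 1, \dots, g-1\} \subseteq G_{\infty}(D)$. But \autoref{Lemma:WhereTheGapsAre} says $|G_{\infty}(D)| = g$, so in fact $G_{\infty}(D) = \{0, 1, \dots, g-1\}$, meaning the gaps $b_{i}$ of \autoref{Definition:PartitionNakayashiki} are exactly $b_{i} = i - 1$.

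Finally, I would plug these values into the partition: each part $b_{g+1-i} - (g - i) = (g - i) - (g - i) = 0$, so $\lambda_{D} = (0, 0, \dots, 0)$ and $|\lambda_{D}| = 0$. There is no real obstacle here; the only thing to be careful about is the identification of $\Theta$ with the locus where $D + (g-1)\infty$ fails to be effective, which is standard from \autoref{Definiton:AbelJacobiWr} together with \autoref{Lemma:Thetag}.
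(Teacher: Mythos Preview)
Your argument is correct and is essentially the paper's proof unpacked in terms of $h^{0}$: the paper shows the contrapositive (a non-gap $k \in [0,g-1]$ yields an effective degree-$k$ divisor $E$ with $D = [E - k\infty] \in W_{k} \subseteq \Theta$), while you phrase the same implication as $D \notin \Theta \Rightarrow h^{0}(D + (g-1)\infty) = 0 \Rightarrow h^{0}(D + k\infty) = 0$ for $k \le g-1 \Rightarrow [0,g-1] \subseteq G_{\infty}(D)$. Two small cosmetic remarks: your closing sentence has the identification backwards ($\Theta$ is where $D + (g-1)\infty$ \emph{is} linearly equivalent to an effective divisor, not where it fails to be), and the citation of \autoref{Lemma:Thetag} is unnecessary---only the definition $\Theta = W_{g-1}$ is used.
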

\begin{proof}
Suppose that $D \not\in \Theta$. If $k \in [0, g - 1] \setminus G_{\infty}(D)$,
then there would be an effective degree $k$ divisor $E$ such that $D = [E - k
\infty] = [(E + g - 1 - k) \infty - (g - 1) \infty] \in \Theta$, which
contradicts the assumption that $D \not\in \Theta$.  Therefore $G_{\infty}(D) =
[0, g - 1]$, so $|\lambda_{D}| = 0$. 
\end{proof}

\begin{mylemma}
\label{Lemma:W-Sum}
We have
\[
\sum_{D \in \mathcal{C}' \cap \Theta} |\lambda_{D}|
\ge \frac{g(n + 1) n^{d - 1}}{12}.
\]
\end{mylemma}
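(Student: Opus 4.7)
The plan is to assemble the three prior lemmas (\autoref{Lemma:W-Sum-Torsion}, \autoref{Lemma:No-Weight-Torsion}, and implicitly the definition of $\mathcal{C}'$) into a short chain of inequalities. The key observation is that the entire $(1-\zeta)$-torsion subgroup sits inside $\mathcal{C}'$, so summing $|\lambda_D|$ over $\mathcal{J}[1-\zeta] \cap \Theta$ gives a \emph{lower} bound for the sum over $\mathcal{C}' \cap \Theta$.

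More precisely, the first step is to note that since $\infty \in \mathcal{C}$, the point $0 = \AJ_{\infty}(\infty)$ lies in $\AJ_{\infty}(\mathcal{C})$, and therefore every $D \in \mathcal{J}[1-\zeta]$ satisfies $(1-\zeta)D = 0 \in \AJ_{\infty}(\mathcal{C})$, which shows
\[
\mathcal{J}[1-\zeta] \subseteq \mathcal{C}'.
\]
Intersecting with $\Theta$ gives $\mathcal{J}[1-\zeta] \cap \Theta \subseteq \mathcal{C}' \cap \Theta$. Since each $|\lambda_D|$ is a nonnegative integer, this inclusion yields
\[
\sum_{D \in \mathcal{C}' \cap \Theta} |\lambda_D| \;\ge\; \sum_{D \in \mathcal{J}[1-\zeta] \cap \Theta} |\lambda_D|.
\]

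The second step is to recognize that the right-hand side is unchanged if we drop the intersection with $\Theta$: by \autoref{Lemma:No-Weight-Torsion}, every $D \in \mathcal{J}[1-\zeta] \setminus \Theta$ contributes $|\lambda_D| = 0$, so
\[
\sum_{D \in \mathcal{J}[1-\zeta] \cap \Theta} |\lambda_D| \;=\; \sum_{D \in \mathcal{J}[1-\zeta]} |\lambda_D|.
\]
Finally, applying \autoref{Lemma:W-Sum-Torsion} evaluates the full torsion sum to $\tfrac{g(n+1)n^{d-1}}{12}$, completing the proof.

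There is no real obstacle here; all the work has been done in the previous lemmas. The only thing to keep in mind is the direction of the inequality: we are using $\mathcal{J}[1-\zeta] \cap \Theta \subseteq \mathcal{C}' \cap \Theta$ (not an equality), and the fact that $|\lambda_D|\ge 0$ lets us pass from a subset sum to the larger set sum. The reverse inequality, together with \autoref{Theorem:Intersection-Multiplicities}, will presumably come from a global intersection number computation elsewhere, which is why only the lower bound is stated here.
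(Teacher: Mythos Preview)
Your proof is correct and follows exactly the same approach as the paper's: use $\mathcal{J}[1-\zeta] \subseteq \mathcal{C}'$ to get the inequality, apply \autoref{Lemma:No-Weight-Torsion} to drop the intersection with $\Theta$, and then invoke \autoref{Lemma:W-Sum-Torsion} to evaluate the sum.
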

\begin{proof}
We have
\begin{align*}
\sum_{D \in \mathcal{C}' \cap \Theta} |\lambda_{D}|
&\ge \sum_{D \in \mathcal{J}[1 - \zeta] \cap \Theta} |\lambda_{D}|
&&\text{(since } \mathcal{C}'\supseteq \mathcal{J}[1 - \zeta]\text{)} \\
&= \sum_{D \in \mathcal{J}[1 - \zeta]} |\lambda_{D}|
&&\text{(by \autoref{Lemma:No-Weight-Torsion})} \\
&= \frac{g (n + 1) n^{d - 1}}{12} &&\text{(by \autoref{Lemma:W-Sum-Torsion}).}
&&
\end{align*}
\end{proof}

For the next couple lemmas, we recall some notions of singular cohomology with
integral coefficients:
\begin{align*}
H^1(\mathcal{C}, \mathbf{Z}) &= H^1(\mathcal{J}, \mathbf{Z}) \simeq
\mathbf{Z}^{2 g} \\
H^*(\mathcal{J}, \mathbf{Z}) &= \bigwedge\nolimits^*(H^1(\mathcal{J},
\mathbf{Z})).
\end{align*}
The wedge product provides the cup product pairing $\smile$ on $H^*(\mathcal{J},
\mathbf{Z})$.  The automorphism $\zeta$ of $\mathcal{C}$ induces the pullback
automorphism $\zeta^{*}$ of $H^1(\mathcal{C}, \mathbf{Z})$.

\begin{mylemma}
\label{Lemma:CharPolyZetaH1}
The characteristic polynomial of $\zeta^{*}$ acting on $H^1(\mathcal{C},
\mathbf{Z})$ is
\[
\left( 1 + T + T^2 + \dots + T^{n - 1} \right)^{d - 1}.
\]
\end{mylemma}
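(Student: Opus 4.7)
The plan is to use the Lefschetz fixed-point formula to compute $\mathrm{tr}((\zeta^{*})^{k} \mid H^{1}(\mathcal{C}, \mathbf{Z}))$ for every $k \in \{0, 1, \ldots, n - 1\}$, and then invert the resulting finite Fourier transform on $\mathbf{Z}/n\mathbf{Z}$ to extract the multiplicity of each eigenvalue of $\zeta^{*}$.

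First I would identify the fixed points of $\zeta^{k}$ on $\mathcal{C}$ for $1 \le k \le n - 1$. Since $\zeta^{k}$ acts by $(x, y) \mapsto (x, \zeta^{k} y)$, the fixed points are precisely the $d + 1$ points $P_{1}, \ldots, P_{d}, \infty$. At each $P_{i}$ the function $y$ is a local uniformizer on which $\zeta^{k}$ acts by multiplication by $\zeta^{k} \ne 1$. At $\infty$, the coprimality $\gcd(n, d) = 1$ lets one write down a monomial uniformizer of the form $x^{a} y^{b}$ with $b$ invertible modulo $n$, on which $\zeta^{k}$ acts as $\zeta^{k b} \ne 1$. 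Hence every fixed point is non-degenerate with local Lefschetz index $1$.

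Since $\zeta^{k}$ is holomorphic (hence orientation-preserving) and $\mathcal{C}$ is connected, $(\zeta^{*})^{k}$ acts as the identity on $H^{0}$ and $H^{2}$. The Lefschetz fixed-point formula then reads
\[
d + 1 = 2 - \mathrm{tr}\bigl((\zeta^{*})^{k} \mid H^{1}\bigr),
\]
so $\mathrm{tr}((\zeta^{*})^{k} \mid H^{1}) = 1 - d$ for $1 \le k \le n - 1$, and the $k = 0$ trace is of course $2g = (n - 1)(d - 1)$.

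Finally, because $(\zeta^{*})^{n} = 1$, every eigenvalue of $\zeta^{*}$ on $H^{1}(\mathcal{C}, \mathbf{C})$ is an $n$-th root of unity; let $m_{j}$ denote the multiplicity of $\zeta^{j}$. Inverting the Fourier transform on $\mathbf{Z}/n\mathbf{Z}$ gives
\[
m_{j} = \frac{1}{n} \sum_{k = 0}^{n - 1} \zeta^{-jk} \, \mathrm{tr}\bigl((\zeta^{*})^{k} \mid H^{1}\bigr),
\]
and a brief computation using $\sum_{k = 1}^{n - 1} \zeta^{-jk} = -1$ for $j \ne 0$ yields $m_{0} = 0$ and $m_{j} = d - 1$ for each $1 \le j \le n - 1$. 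The characteristic polynomial is therefore
\[
\prod_{j = 1}^{n - 1} (T - \zeta^{j})^{d - 1} = \bigl((T^{n} - 1)/(T - 1)\bigr)^{d - 1} = (1 + T + \cdots + T^{n - 1})^{d - 1},
\]
as claimed. The main point requiring care is the fixed-point analysis at $\infty$ (to ensure the local index there is genuinely $1$); once that is handled, everything else is routine trace bookkeeping.
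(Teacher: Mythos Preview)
Your proof is correct and takes a genuinely different route from the paper. The paper tensors up to $\mathbf{C}$, invokes the Hodge decomposition $H^{1}(\mathcal{C},\mathbf{C}) \simeq H^{0}(\mathcal{C},\Omega^{1}) \oplus \overline{H^{0}(\mathcal{C},\Omega^{1})}$, writes down the explicit eigenbasis $\{x^{a-1}y^{-b}\,dx\}$ of holomorphic differentials (borrowed from Dokchitser), and then counts, for each $k \in [1,n-1]$, the holomorphic and antiholomorphic eigenvectors with eigenvalue $\zeta^{k}$ via a short floor-function identity to obtain multiplicity $d-1$. Your argument bypasses all of this: you compute the traces $\mathrm{tr}((\zeta^{*})^{k}\mid H^{1})$ directly from the Lefschetz fixed-point theorem and recover the multiplicities by Fourier inversion on $\mathbf{Z}/n\mathbf{Z}$.

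Your approach is cleaner and more self-contained for the bare statement of the lemma; it needs no explicit description of differentials and no appeal to Hodge theory. The paper's approach, however, is not wasted effort in context: the same explicit eigenbasis of $H^{0}(\mathcal{C},\Omega^{1})$ reappears later (in the lemmas on $V$, the coordinates $u_{w_{i}}$, and the local expansions near $\infty$), so the paper is really computing something it will reuse. If you only need the characteristic polynomial, your Lefschetz argument is preferable; if you also need the eigenbasis, the paper's method does double duty.
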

\begin{proof}
We may as well verify this assertion after tensoring up to $\mathbf{C}$.
Applying Theorem 2.2 of \cite{dokchitser2018models} to $\mathcal{C}$ gives a
basis of $H^0(\mathcal{C}, \Omega^1)$, which we reindex and rescale to obtain
the following basis of $H^0(\mathcal{C}, \Omega^1)$:
\begin{equation}
\label{Eqn:H1CO1Basis}
\{ x^{a - 1} y^{-b} \, dx \colon  (a, b) \in \mathbf{Z}^{2} \colon 1 \le a \le d
- 1,\; 1 \le b \le n - 1,\; n a < d b \}.
\end{equation}
From the Hodge decomposition, $H^1(\mathcal{C}, \mathbf{Z}) \otimes \mathbf{C}
\simeq H^1_{dR}(\mathcal{C}, \mathbf{C}) = H^0(\mathcal{C},\Omega^1) \oplus
\overline{H^0(\mathcal{C},\Omega^1)}$, so a basis for $H^{1}(\mathcal{C},
\mathbf{C})$ consists of the $g$ holomorphic differentials in
\eqref{Eqn:H1CO1Basis} and their $g$ anti-holomorphic conjugates. Observe that
this basis is an eigenbasis for the $\zeta$-action, and that each eigenvalue is
of the form $\zeta^{k}$ for some $k \in [1, n - 1]$. 

For the eigenvalue $\zeta^{k}$, the corresponding eigenvectors are the
holomorphic differentials
\[ 
\{ x^{a - 1} y^{-(n - k)} \, dx \colon 1 \le a \le d - 1,\; n a < d (n - k) \} =
\left\{ x^{a - 1} y^{-(n - k)} \colon 1 \le a \le \frac{d (n - k)}{n} \right\} 
\]
and their antiholomorphic conjugates
\[
\{ \overline{x^{a - 1} y^{-k} \, dx} \colon 1 \le a \le d - 1,\; n a < d k \} =
\left\{ \overline{x^{a - 1} y^{-k}} \colon 1 \le a \le \frac{d k}{n} \right\} 
\]
Therefore, the total number of eigenvectors with eigenvalue $\zeta^k$ is
\[
\left \lfloor \frac{d (n - k)}{n} \right \rfloor +\left \lfloor \frac{d k}{n}
\right \rfloor = d - \left\lceil \frac{d k}{n} \right\rceil+\left \lfloor \frac{d
k}{n} \right\rfloor = d - 1.
\]
Hence each eigenvalue $\zeta^{k}$ has multiplicity $d - 1$, so the
characteristic polynomial is
\[
\prod_{k = 1}^{n - 1} (T - \zeta^k)^{d - 1} = \left( 1 + T + T^2 + \dots + T^{n
- 1} \right)^{d - 1}. 
\]
\end{proof}

\begin{mydefinition}
Denote the singular cohomology classes of the cycles $\{ 0 \},
\AJ_{\infty}(\mathcal{C})$, $\mathcal{C}'$, $\Theta$ on $\mathcal{J}$ by
$[\infty] \in H^{2 g}(\mathcal{J}, \mathbf{Z})$, $[\mathcal{C}], [\mathcal{C}']
\in H^{2g - 2}(\mathcal{J}, \mathbf{Z})$, $[\Theta] \in H^{2}(\mathcal{J},
\mathbf{Z})$ respectively.

For $r \ge 0$, define $W_{r}$ as in \autoref{Definiton:AbelJacobiWr} to
be the image of the composite morphism $\mathcal{C}^r \to \mathcal{J}^r
\stackrel{\text{sum}}{\to} \mathcal{J}$ and denote its cohomology class by
$[W_{r}] \in H^{2(g - r)}(\mathcal{J}, \mathbf{Z})$. In our notation, $[W_{0}] =
[\infty]$, $[W_{1}] = [\mathcal{C}]$, $[W_{g - 1}] = [\Theta]$ by
\autoref{Lemma:Thetag}.
\end{mydefinition}

\begin{mylemma}
\label{Lemma:SymplecticEigenbasis}
There is a $\mathbf{C}$-basis $\{ a_1, \dots, a_g, b_1, \dots, b_g \}$ for
$H^1(\mathcal{J}, \mathbf{C})$ such that
\begin{enumerate}[label=\upshape(\arabic*),
ref=\autoref{Lemma:SymplecticEigenbasis}(\arabic*)]

\item \label{Lemma:SymplecticEigenbasisSymplectic}
The basis is symplectic: for every $1 \le i, j \le n$, 
\begin{align*}
a_i \smile b_j &= \delta_{i, j}\\
a_i \smile a_j &= 0\\
b_i \smile b_j &= 0.
\end{align*}

\item \label{Lemma:SymplecticEigenbasisEigenbasis}
Each $a_i$ and $b_j$ is an eigenvector for $\zeta^{*}$.

\item \label{Lemma:SymplecticEigenbasisEigenvalues}
Let $\lambda(a_i)$, $\lambda(b_j)$ be the eigenvalues corresponding to $a_i$,
$b_j$, respectively. Then $\lambda(b_i) = \lambda(a_i)^{-1}$.
\end{enumerate}
\end{mylemma}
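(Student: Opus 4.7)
The plan is to exploit that $\zeta^{*}$ acts on the symplectic vector space $H^1(\mathcal{J}, \mathbf{C})$ (with form given by cup product) as a finite-order symplectic transformation, and then to build the basis by simultaneously diagonalizing $\zeta^{*}$ and pairing dual eigenspaces.

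First I would check that $\zeta^{*}$ preserves the symplectic form $\smile$: since $\zeta$ is a holomorphic automorphism of $\mathcal{J}$ it preserves the fundamental class in top cohomology, so for any $\alpha, \beta \in H^1(\mathcal{J}, \mathbf{C})$ one has $\zeta^{*} \alpha \smile \zeta^{*} \beta = \zeta^{*}(\alpha \smile \beta) = \alpha \smile \beta$. By \autoref{Lemma:CharPolyZetaH1}, the eigenvalues of $\zeta^{*}$ on $H^1(\mathcal{J}, \mathbf{C})$ are exactly $\zeta^k$ for $k \in [1, n-1]$, each with multiplicity $d - 1$; write $V_\mu$ for the $\mu$-eigenspace. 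For $v \in V_\mu$ and $w \in V_\nu$, the identity $v \smile w = \zeta^{*} v \smile \zeta^{*} w = \mu \nu (v \smile w)$ forces $v \smile w = 0$ unless $\mu \nu = 1$, and then nondegeneracy of $\smile$ implies that its restriction to $V_\mu \times V_{\mu^{-1}}$ is a perfect pairing while $V_\mu$ is $\smile$-orthogonal to every other eigenspace.

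The basis is then assembled one eigenvalue-pair at a time. For each unordered pair $\{\mu, \mu^{-1}\}$ of eigenvalues with $\mu \ne \mu^{-1}$, pick any basis $a^{(\mu)}_1, \ldots, a^{(\mu)}_{d-1}$ of $V_\mu$ and the unique $\smile$-dual basis $b^{(\mu)}_1, \ldots, b^{(\mu)}_{d-1}$ of $V_{\mu^{-1}}$; this contributes $2(d-1)$ basis vectors that automatically satisfy all three desired properties, since vectors from distinct eigenspace pairs pair trivially, and within a pair both $V_\mu$ and $V_{\mu^{-1}}$ are isotropic. The only self-inverse eigenvalue that can occur is $\mu = -1$, which happens precisely when $n$ is even. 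In that case $\smile$ restricts to a nondegenerate skew-symmetric form on $V_{-1}$, and a symplectic basis for $V_{-1}$ completes the construction.

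The only potential obstacle is that $V_{-1}$, when it appears, must have even dimension so that a symplectic basis exists. This is automatic from the running hypothesis $\gcd(n, d) = 1$: when $n$ is even, $d$ must be odd, so $\dim V_{-1} = d - 1$ is even. Every other step is formal symplectic linear algebra.
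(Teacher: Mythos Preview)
Your proposal is correct and follows essentially the same route as the paper: identify $\zeta^{*}$ as a diagonalizable symplectic operator via \autoref{Lemma:CharPolyZetaH1}, pair the eigenspaces $V_{\mu}$ and $V_{\mu^{-1}}$ using the perfect pairing induced by $\smile$, and handle the self-inverse eigenvalue $-1$ separately. The only cosmetic difference is that you justify $\dim V_{-1}$ being even by the arithmetic observation that $\gcd(n,d)=1$ forces $d-1$ to be even when $n$ is even, whereas the paper appeals to the general fact that any space carrying a nondegenerate alternating form is even-dimensional.
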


\begin{proof}
From symplectic linear algebra, each diagonalizable matrix $M$ in $\Sp(2 g,
\mathbf{C})$ has a symplectic eigenbasis. To see this, let $E_{\lambda}$ be the
eigenspace corresponding to the eigenvalue $\lambda \in \mathbf{C}$. Since $M$
respects the symplectic pairing, the eigenvalues come in pairs $\{ \lambda,
\lambda^{-1} \}$. For $\lambda \not\in \{ \pm 1 \}$, select any basis for
$E_{\lambda}$ and take the corresponding dual basis for $E_{\lambda^{-1}}$. For
$\lambda \in \{ \pm 1 \}$, the dimension of $E_{\lambda}$ must be even so one
may pick any symplectic basis for $E_{\lambda}$.

The lemma now follows from the observation in the previous paragraph since the
pullback of any automorphism of a manifold respects its cup product and 
\autoref{Lemma:CharPolyZetaH1} implies that the action of $\zeta^{*}$ on
$H^{1}(\mathcal{J}, \mathbf{C})$ is diagonalizable.
\end{proof}

\begin{mylemma}
\label{Lemma:CohoIntersection}
The following equality holds in $H^{0}(\mathcal{J}, \mathbf{Z})$:
\[
[\mathcal{C}'] \smile [\Theta] = \frac{g(n + 1) n^{d - 1}}{12} [\infty]. 
\]
\end{mylemma}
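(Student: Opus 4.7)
The plan is to compute the cup product $[\mathcal{C}'] \smile [\Theta]$ directly in the symplectic eigenbasis provided by \autoref{Lemma:SymplecticEigenbasis}. The two framework ingredients I will use are Poincar\'{e}'s formula $[W_r] = [\Theta]^{g - r}/(g - r)!$, which in particular gives $[\mathcal{C}] = [\Theta]^{g - 1}/(g - 1)!$, and the observation that because $1 - \zeta$ is an \'{e}tale isogeny of $\mathcal{J}$ (we are over $\mathbf{C}$), the preimage $\mathcal{C}' = (1 - \zeta)^{-1} \AJ_{\infty}(\mathcal{C})$ satisfies $[\mathcal{C}'] = (1 - \zeta)^{*} [\mathcal{C}]$ in $H^{2 g - 2}(\mathcal{J}, \mathbf{Z})$.

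I will first write $[\Theta] = \sum_{i = 1}^{g} a_i \smile b_i$, the class of the principal polarization in the symplectic basis. Expanding Poincar\'{e}'s formula yields
\[
[\mathcal{C}] = \sum_{j = 1}^{g} \prod_{i \neq j} (a_i \smile b_i).
\]
Setting $\mu_j \colonequals (1 - \lambda(a_j))(1 - \lambda(b_j))$, the pullback $(1 - \zeta)^{*}$ scales each factor $a_i \smile b_i$ by $\mu_i$, and then cupping with $\sum_k a_k \smile b_k$ retains only the $k = j$ summand for each $j$ (all other terms vanish because each $a_i$ and $b_i$ squares to zero). This gives
\[
[\mathcal{C}'] \smile [\Theta] = \sigma_{g - 1}(\mu_1, \dots, \mu_g) \cdot [\infty],
\]
where $\sigma_{g - 1}$ is the $(g - 1)$th elementary symmetric polynomial and $[\infty] = a_1 \smile b_1 \smile \cdots \smile a_g \smile b_g$.

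It then remains to evaluate $\sigma_{g - 1}(\mu_1, \dots, \mu_g) = \sigma_g(\mu) \cdot \sum_i \mu_i^{-1}$. By \autoref{Lemma:CharPolyZetaH1}, each $\zeta^k$ for $1 \le k \le n - 1$ occurs as an eigenvalue of $\zeta^{*}$ on $H^{1}$ with multiplicity $d - 1$, so $\sigma_g = \prod_{k = 1}^{n - 1} (1 - \zeta^k)^{d - 1} = n^{d - 1}$ using the standard identity $\prod_{k = 1}^{n - 1} (1 - \zeta^k) = n$. For the reciprocal sum, pairing $\lambda(a_i)$ with its inverse $\lambda(b_i) = \lambda(a_i)^{-1}$ gives
\[
\sum_{i = 1}^{g} \frac{1}{\mu_i} = \frac{d - 1}{2} \sum_{k = 1}^{n - 1} \frac{1}{(1 - \zeta^k)(1 - \zeta^{-k})} = \frac{d - 1}{8} \sum_{k = 1}^{n - 1} \frac{1}{\sin^2(\pi k / n)} = \frac{(d - 1)(n^2 - 1)}{24},
\]
where the last step uses the classical identity $\sum_{k = 1}^{n - 1} \csc^2(\pi k / n) = (n^2 - 1)/3$. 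Multiplying produces $\sigma_{g - 1} = (d - 1)(n - 1)(n + 1) n^{d - 1}/24 = g(n + 1) n^{d - 1}/12$, as desired.

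The main obstacle I anticipate is justifying the two structural inputs cleanly: the identification $[\mathcal{C}'] = (1 - \zeta)^{*} [\mathcal{C}]$ (which follows because $1 - \zeta$ is a finite \'{e}tale cover, so cohomology pullback computes the class of the preimage cycle) and the expression $[\Theta] = \sum a_i \smile b_i$ (which follows because $\Theta$ induces the principal polarization and the chosen basis is unimodular symplectic). Once these are in place, the remaining elementary-symmetric and trigonometric manipulations are routine.
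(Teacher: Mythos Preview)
Your proposal is correct and follows essentially the same argument as the paper: both compute $[\mathcal{C}'] \smile [\Theta]$ in the symplectic eigenbasis of \autoref{Lemma:SymplecticEigenbasis}, use Poincar\'e's formula to expand $[\mathcal{C}]$ and $[\Theta]$, pull back by $1 - \zeta$, and reduce to the identity $\sum_{k=1}^{n-1} \frac{1}{(1-\zeta^k)(1-\zeta^{-k})} = \frac{n^2-1}{12}$ (your trigonometric form is exactly \autoref{Lemma:SumCsc2}, and your coefficient $\frac{d-1}{2}$ equals the paper's $\frac{g}{n-1}$). The only cosmetic difference is that you package the product-times-sum as $\sigma_{g-1}(\mu_1,\dots,\mu_g)$, which is a nice way to phrase it.
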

\begin{proof}
The following proof was suggested by Aaron Pixton.

We may as well verify this identity after tensoring up to $\mathbf{C}$. 
Let $\{ a_1, \dots, a_g, b_1, \dots, b_g \}$ be a $\mathbf{C}$-basis for
$H^1(\mathcal{J}, \mathbf{C})$ as in \autoref{Lemma:SymplecticEigenbasis}.

``Poincar\'e's Formula 11.2.1'' of \cite{Birkenhake_2004} implies that for
$r \in [0, g]$,
\[
[W_{r}] = \sum_{1 \le i_1 < i_2 < \dots < i_{g - r} \le g} (a_{i_1} \wedge
b_{i_1}) \wedge \cdots \wedge (a_{i_{g - r}} \wedge b_{i_{g - r}}),
\]
so
\begin{align*}
[\Theta] &= [W_{g - 1}] = \sum_{i = 1}^{g} a_{i} \wedge b_{i}  \\
[\mathcal{C}] &= [W_{1}] = \sum_{i = 1}^{g} a_{1} \wedge b_{1} \wedge
\cdots \wedge \widehat{a_{i} \wedge b_{i}} \wedge \cdots \wedge a_{g}
\wedge b_{g} \\
[\infty] &= [W_{0}] = a_{1} \wedge b_{1} \wedge \cdots \wedge a_{g} \wedge
b_{g}.
\end{align*}
(The hat indicates that the term is not there.)

Since $[\mathcal{C}'] = (1 - \zeta)^{*} [\mathcal{C}]$, a computation using
\autoref{Lemma:CharPolyZetaH1} and \autoref{Lemma:SymplecticEigenbasis} yields
\begin{align*}
[\mathcal{C}'] \smile [\Theta] &= \left((1 - \zeta)^{*} [\mathcal{C}]\right)
\smile [\Theta] \\
&= \left( \prod_{i = 1}^{g} (1 - \lambda(a_i)) (1 -\lambda(b_i)) \right) \cdot
\left(\sum_{i = 1}^{g} \frac{1}{(1 - \lambda(a_i))(1 - \lambda(b_i))}\right)
[\infty] \\
&= \left( \prod_{i = 1}^{n - 1} (1 - \zeta^i) \right)^{d - 1} \cdot \left(
\frac{g}{n - 1} \sum_{i = 1}^{n - 1} \frac{1}{(1 - \zeta^{i})(1 - \zeta^{-i})}
\right) [\infty] \\
&= n^{d - 1} \cdot \left(\frac{g}{n - 1} \cdot \frac{n^2 - 1}{12} \right)
[\infty] \quad \text{(from \autoref{Lemma:SumCsc2})}\\
&= \left(\frac{g (n + 1) n^{d - 1}}{12}\right) [\infty]. &&
\end{align*}
\end{proof}

\begin{mycorollary}
\label{Corollary:I-Sum}
We have
\[
\sum_{D \in \mathcal{C}' \cap \Theta} i(D) = \frac{g(n + 1) n^{d - 1}}{12}.
\]
\end{mycorollary}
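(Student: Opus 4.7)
The plan is to pass from local intersection multiplicities to a global cohomological computation. The only nontrivial observation needed is that $\mathcal{C}' \cap \Theta$ is a proper intersection inside $\mathcal{J}$, so that the well-known identity
\[
\sum_{D \in \mathcal{C}' \cap \Theta} i(D) \;=\; \int_{\mathcal{J}} [\mathcal{C}'] \smile [\Theta]
\]
is available; then the value of the right-hand side is read off from \autoref{Lemma:CohoIntersection}.

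First I would check that the intersection is finite. The map $1 - \zeta \colon \mathcal{J} \to \mathcal{J}$ is an isogeny (a nonzero endomorphism of a simple-or-not abelian variety, with finite kernel $\mathcal{J}[1-\zeta]$), so $\mathcal{C}' = (1-\zeta)^{-1}\AJ_\infty(\mathcal{C})$ is a smooth closed curve in $\mathcal{J}$. Its dimension is $1$, so $\dim \mathcal{C}' + \dim \Theta = 1 + (g-1) = g = \dim \mathcal{J}$, i.e.\ $\mathcal{C}'$ and $\Theta$ have complementary dimension. By \autoref{Theorem:Unique-Intersection-In-J-1-z}, every point of $\mathcal{C}' \cap \Theta$ lies in $\mathcal{J}[1-\zeta]$, which is a finite group; hence the intersection is $0$-dimensional, i.e.\ proper.

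With properness of the intersection in hand, the sum of local intersection multiplicities equals the degree of the top-degree class $[\mathcal{C}'] \smile [\Theta] \in H^{2g}(\mathcal{J},\mathbf{Z})$, i.e.\ the integer $c$ such that $[\mathcal{C}'] \smile [\Theta] = c \cdot [\infty]$. (This is the standard fact that, for transverse or at least proper intersections of complementary-dimension cycles on a smooth oriented manifold, the cup product of their Poincar\'e duals evaluated against the fundamental class recovers the count of intersection points weighted by local intersection multiplicities.) Applying \autoref{Lemma:CohoIntersection} gives $c = g(n+1)n^{d-1}/12$, which finishes the proof.

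The only step one might worry about is that the scheme-theoretic intersection multiplicity $i(D)$ defined at the start of this section agrees with the topological local intersection number used in the cohomological pairing. This is standard for smooth complex varieties meeting properly (the analytic local intersection number equals the length of the local Artinian intersection ring, which is the complex-analytic $i(D)$ we are using), so there is no real obstacle; I would simply cite this fact rather than reprove it.
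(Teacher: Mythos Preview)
Your proposal is correct and follows essentially the same approach as the paper: reduce to the cohomological computation of \autoref{Lemma:CohoIntersection} via the standard identification of the sum of local intersection multiplicities with the degree of the cup product. The paper's proof is a one-line invocation of this fact; you have simply unpacked the implicit steps (properness of the intersection via \autoref{Theorem:Unique-Intersection-In-J-1-z}, and compatibility of scheme-theoretic with topological intersection numbers), which is entirely reasonable.
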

\begin{proof}
The dual of \autoref{Lemma:CohoIntersection} implies that the total intersection
of $\mathcal{C}'$ and $\Theta$ in $\mathcal{J}$ is $g(n + 1) n^{d - 1} / 12$.
\end{proof}

\begin{mylemma}
\label{Lemma:SumCsc2}
We have
\[
\sum_{i = 1}^{n - 1} \frac{1}{(1 - \zeta^{i})(1 - \zeta^{-i})} = \frac{n^{2} -
1}{12}.
\]
\end{mylemma}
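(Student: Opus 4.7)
The plan is to reduce the two-sided sum to sums of the form $\sum 1/(1-\zeta^i)$ and $\sum 1/(1-\zeta^i)^2$, each of which can be read off by logarithmic differentiation of the polynomial $f(x) = 1 + x + \cdots + x^{n-1}$ at $x = 1$.

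First I would observe that $1 - \zeta^{-i} = -\zeta^{-i}(1 - \zeta^i)$, so each summand equals $-\zeta^i/(1-\zeta^i)^2$. Writing $-\zeta^i = (1 - \zeta^i) - 1$ then decomposes the summand as $\frac{1}{1-\zeta^i} - \frac{1}{(1-\zeta^i)^2}$, turning the desired sum $S$ into
\[
S = \sum_{i=1}^{n-1} \frac{1}{1-\zeta^i} \;-\; \sum_{i=1}^{n-1} \frac{1}{(1-\zeta^i)^2}.
\]

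Second, since $f(x) = 1 + x + \cdots + x^{n-1} = \prod_{i=1}^{n-1} (x - \zeta^i)$, I would use
\[
\frac{f'(x)}{f(x)} = \sum_{i=1}^{n-1} \frac{1}{x - \zeta^i}, \qquad -\frac{d}{dx}\!\left( \frac{f'(x)}{f(x)} \right) = \sum_{i=1}^{n-1} \frac{1}{(x - \zeta^i)^2},
\]
and evaluate at $x = 1$. The elementary values $f(1) = n$, $f'(1) = \binom{n}{2}$, and $f''(1) = \sum_{k=2}^{n-1} k(k-1) = n(n-1)(n-2)/3$ give $\sum \frac{1}{1 - \zeta^i} = (n-1)/2$ and $\sum \frac{1}{(1 - \zeta^i)^2} = -(n-1)(n-5)/12$.

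Finally, substituting these into the formula for $S$ yields
\[
S = \frac{n-1}{2} + \frac{(n-1)(n-5)}{12} = \frac{6(n-1) + (n-1)(n-5)}{12} = \frac{(n-1)(n+1)}{12} = \frac{n^2 - 1}{12},
\]
as claimed. There is no serious conceptual obstacle: the only mildly clever step is the opening identity which splits the two-sided product $(1-\zeta^i)(1-\zeta^{-i})$ into first- and second-order pole terms in $\zeta^i$ alone, and everything after that is routine evaluation at $x = 1$. The only thing to be careful about is sign bookkeeping when expanding $(f'/f)' = (f''f - (f')^2)/f^2$.
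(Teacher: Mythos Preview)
Your proof is correct. The algebraic identity $\frac{1}{(1-\zeta^i)(1-\zeta^{-i})} = \frac{1}{1-\zeta^i} - \frac{1}{(1-\zeta^i)^2}$ is valid, and your evaluations of $f'(1)/f(1)$ and $(f'/f)'(1)$ for $f(x)=1+x+\cdots+x^{n-1}$ are accurate; the final arithmetic checks out.

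However, your route is genuinely different from the paper's. The paper (following a suggestion of Poonen) uses the residue theorem: it writes the sum as the total residue of the meromorphic differential
\[
\omega = \frac{1}{(1-z)(1-z^{-1})}\cdot\frac{d(z^n-1)}{z^n-1}
\]
at the nontrivial $n$th roots of unity, then applies the residue theorem on $\mathbf{P}^1$ to turn this into $-\Res_{\infty}(\omega)-\Res_1(\omega)$. The residue at $\infty$ vanishes because the rational factor vanishes there, and the residue at $z=1$ is computed (by Mathematica) to be $(1-n^2)/12$. Your argument instead stays entirely within elementary polynomial algebra: a partial-fractions splitting followed by logarithmic differentiation of $f$ at $x=1$. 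What you gain is a self-contained computation with no appeal to complex analysis or to a computer algebra system; what the paper's argument gains is a conceptual packaging (all the work is concentrated in a single residue at a triple pole) that generalizes readily to other such sums. Both ultimately reduce to the same Taylor coefficients $f(1)$, $f'(1)$, $f''(1)$, since the residue at $z=1$ is governed by the local expansion of $(z^n-1)$ near $z=1$.
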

\begin{proof} 
The following proof was suggested by Bjorn Poonen. 

The differential $d(z^n-1)/(z^n-1)$ has a simple pole with residue $1$ at each
$n$th root of unity and a simple pole with residue $-n$ at infinity.  Therefore
the sum equals the sum of the residues of
\[
   \omega \colonequals \left(\frac{1}{(1-z)(1-z^{-1})}\right)
	 \frac{d(z^{n}-1)}{z^{n}-1}.
\]
at $n$th roots of unity not 1, or equivalently $- \Res_{\infty}(\omega) -
\Res_{1}(\omega)$.  Since $1/((1-z)(1-z^{-1}))$ vanishes at $\infty$,  $\omega$
is holomorphic at $\infty$.  On the other hand, Mathematica computes that
$\Res_{1}(\omega) = (1-n^2)/12$. 
\end{proof}

\begin{mydefinition}
Let 
\begin{align*}
&\omega_{\mathcal{C}} &&\text{be the canonical bundle of }\mathcal{C} \\
&V &&\text{be }H^{0}(\mathcal{C}, \omega_{\mathcal{C}}) \\
&\Lambda \subseteq V^{\vee} &&\text{be the period lattice of }\mathcal{C} \\
&z &&\text{be a local coordinate for }\mathcal{C}\text{ at } \infty \\
&A &&\text{be the set }\{ (a, b) \in \mathbf{Z}^{2} \colon 1 \le a \le d - 1,\; 1
\le b \le n - 1,\; n a < d b  \}. 
\end{align*}
We abuse notation and also use $z$ to denote a local coordinate for
$\AJ_{\infty}(\mathcal{C})$ at $0$.
\end{mydefinition}

\begin{mytheorem}
\label{Theorem:AbelJacobiWithIntegrals}
There is an isomorphism $\xi \colon \mathcal{J} \to V^{\vee} / \Lambda$
such that for all $P \in \mathcal{C}$, if $\gamma$ is a path on $\mathcal{C}$
from $\infty$ to $P$, then
\[
\xi \left( \AJ_{\infty}(P) \right) = \left( \kappa \in V \mapsto
\int_{\gamma} \kappa \in \mathbf{C} \right) \pmod{\Lambda} \in V^{\vee} /
\Lambda.
\]
\end{mytheorem}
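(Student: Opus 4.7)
The plan is to invoke the classical Abel--Jacobi theorem, since we are now working over $K = \mathbf{C}$ and $\mathcal{C}$ is a compact Riemann surface of genus $g$. The space $V = H^0(\mathcal{C}, \omega_{\mathcal{C}})$ is $g$-dimensional, and the period map $H_1(\mathcal{C}, \mathbf{Z}) \to V^\vee$ sending $\gamma \mapsto (\kappa \mapsto \int_\gamma \kappa)$ has image the full-rank lattice $\Lambda$, so $V^\vee / \Lambda$ is a $g$-dimensional complex torus.

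First I would define a map $\widetilde{\xi} \colon \Div^0(\mathcal{C}) \to V^\vee / \Lambda$ by sending $\sum n_i P_i$ to the class of the functional $\kappa \mapsto \sum n_i \int_{\gamma_i} \kappa$, where each $\gamma_i$ is any path from $\infty$ to $P_i$. Different choices of the $\gamma_i$ differ by closed loops, whose integrals lie in $\Lambda$, so $\widetilde{\xi}$ is well-defined. In this language the theorem is just the assertion that $\widetilde{\xi}$ descends to an isomorphism $\xi \colon \mathcal{J} \to V^\vee / \Lambda$, and that the induced map agrees on Abel--Jacobi images of points with the integral formula in the statement.

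The heart of the matter then splits into (i) $\widetilde{\xi}$ vanishes on principal divisors, so descends to $\mathcal{J}$, and (ii) the induced map $\xi$ is bijective. Claim (i) is Abel's theorem and claim (ii) is Jacobi inversion; both are classical for compact Riemann surfaces and are proved, for example, in Griffiths--Harris or Miranda. I would simply cite them rather than reproduce the arguments, since the machinery required (Riemann's bilinear relations, theta functions, and a degree computation on the symmetric product $\mathcal{C}^{(g)}$) is orthogonal to the rest of this paper. Once both are in hand, the formula for $\xi \circ \AJ_{\infty}(P)$ is immediate from the definition of $\widetilde{\xi}$ applied to the degree-zero divisor $P - \infty$.

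The main obstacle, such as it is, is purely one of presentation: the content here is entirely classical, and the theorem is merely a restatement of the Abel--Jacobi isomorphism adapted to the notation $V$, $\Lambda$, $z$ fixed in the preceding definition. Consequently the proof is essentially a pointer to the literature, and the only care needed is to verify that the conventions for $\Lambda \subseteq V^\vee$ and for the path $\gamma$ from $\infty$ to $P$ match those in the cited source.
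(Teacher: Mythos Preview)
Your proposal is correct and takes essentially the same approach as the paper: the paper's proof is simply ``See Section A.6.3 of \cite{Hindry_2000},'' a bare citation of the classical Abel--Jacobi theorem. Your write-up is more detailed in sketching Abel's theorem and Jacobi inversion, but the underlying strategy---recognize the statement as the standard Abel--Jacobi isomorphism and point to the literature---is identical.
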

\begin{proof}
See Section A.6.3 of \cite{Hindry_2000}.
\end{proof}

\begin{mydefinition}
Let $\pi$ be the composite $V^{\vee} \to V^{\vee} / \Lambda
\xrightarrow{\xi^{-1}} \mathcal{J}$. The kernel of $\pi$ is $\Lambda$ and $\pi$
expresses $V^{\vee}$ as the universal cover of $\mathcal{J}$.
\end{mydefinition}

\begin{mydefinition}
Define $\psi \colon A \to \mathbf{Z}$ by $\psi(a, b) = d b - n a$. For all $(a,
b) \in A$, note that $\psi(a, b) \ge 1$ and $\psi(a, b) = d b - na \le d (n - 1)
- n = 2g - 1$, so the image of $\psi$ is contained in $[1, 2 g - 1]$.
\end{mydefinition}

\begin{mylemma} \label{Lemma:BasisForV} \hfill
\begin{enumerate}[label={\upshape{(\arabic*)}},
ref={\autoref{Lemma:BasisForV}(\arabic*)}]

\item \label{Lemma:BasisForVInjection}
$\psi$ is an injection.

\item \label{Lemma:BasisForVBasis}
The set $\{ x^{a - 1} y^{-b} \, dx \colon (a, b) \in A \}$ is a basis of $V$. 

\item \label{Lemma:BasisForVExpansion}
There exist nonzero constants $C_{a, b}$ such that
\[
x^{a - 1} y^{-b} \, dx =  C_{a, b} z^{b d - a n - 1} \left( 1 + O(z) \right)\,
dz.
\]
\end{enumerate}
\end{mylemma}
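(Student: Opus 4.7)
The plan is to establish (1), then (3), and finally combine these to deduce (2). Part (1) is immediate from number theory: if $\psi(a, b) = \psi(a', b')$, then $d(b - b') = n(a - a')$, and since $\gcd(n, d) = 1$ while $|b - b'| < n$, we are forced to have $b = b'$ and hence $a = a'$.

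For (3) I will use \autoref{Lemma:NegVInfinities}. Let $z$ be a uniformizer at $\infty$. Since $-v_{\infty}(x) = n$, we may write $x = c\, z^{-n}(1 + O(z))$ with $c \neq 0$, so $dx$ has leading term a nonzero scalar multiple of $z^{-n - 1}\, dz$; similarly $-v_{\infty}(y) = d$ yields $y^{-b} = c'\, z^{bd}(1 + O(z))$ with $c' \neq 0$. Multiplying and collecting constants gives
\[
x^{a - 1} y^{-b}\, dx = C_{a, b}\, z^{-n(a - 1) + bd - n - 1}\bigl(1 + O(z)\bigr)\, dz = C_{a, b}\, z^{\psi(a, b) - 1}\bigl(1 + O(z)\bigr)\, dz
\]
for some nonzero $C_{a, b}$.

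For (2), I first verify $|A| = g$. Since $\gcd(n, d) = 1$, no lattice point in $[1, d - 1] \times [1, n - 1]$ satisfies $na = db$, and the involution $(a, b) \mapsto (d - a, n - b)$ exchanges $\{ na < db \}$ with $\{ na > db \}$, so $|A| = \tfrac{1}{2}(n - 1)(d - 1) = g$. By (1) and (3), the elements of $\{ x^{a - 1} y^{-b}\, dx : (a, b) \in A \}$ have pairwise distinct nonnegative orders at $\infty$, hence are linearly independent. It remains only to verify holomorphicity away from $\infty$: where $y \neq 0$ this is automatic, while at a ramification point $P_{i} = (-\alpha_{i}, 0)$ one differentiates $y^{n} = \prod_{j}(x + \alpha_{j})$ to get $n y^{n - 1}\, dy = (c_{i} + O(y))\, dx$ for some nonzero $c_{i} = \prod_{k \neq i}(\alpha_{k} - \alpha_{i})$, showing that $y^{-b}\, dx$ is a unit multiple of $y^{n - 1 - b}\, dy$ and thus holomorphic since $b \le n - 1$. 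Having produced $g$ linearly independent holomorphic differentials in the $g$-dimensional space $V$, we are done. The only mildly subtle step is this local check at the ramification points; everything else follows directly from \autoref{Lemma:NegVInfinities} and elementary counting.
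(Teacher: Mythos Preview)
Your arguments for (1) and (3) are essentially identical to the paper's. The difference is in (2): the paper simply cites an external reference (Theorem~2.2 of \cite{dokchitser2018models}, already invoked earlier as \eqref{Eqn:H1CO1Basis}) for the fact that $\{x^{a-1}y^{-b}\,dx : (a,b)\in A\}$ is a basis of $H^0(\mathcal{C},\Omega^1)$, whereas you prove it from scratch by counting $|A|=g$, deducing linear independence from the distinct orders at $\infty$ furnished by (1) and (3), and checking holomorphicity directly at the branch points via $n y^{n-1}\,dy = f'(x)\,dx$. Your route is more self-contained and elementary; the paper's route is shorter but imports the result as a black box. Both are correct.
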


\begin{proof} \hfill
\begin{enumerate}[label={\upshape{(\arabic*)}},
ref={the proof of \autoref{Lemma:BasisForV}(\arabic*)}]

\item \label{LemmaProof:BasisForVInjection}
If $\psi(a, b) = \psi(a', b')$, then $d (b - b') = n (a - a')$, and since $d$ is
coprime to $n$, this would imply $d | a - a'$, and since $a, a' \in [1, d - 1]$,
this means $a = a'$. Similarly, $b = b'$.

\item \label{LemmaProof:BasisForVBasis}
This is the basis we found in \eqref{Eqn:H1CO1Basis}.

\item \label{LemmaProof:BasisForVExpansion}
Since $-v_{\infty}(x) = n$ and $-v_{\infty}(y) = d$ by
\autoref{Lemma:NegVInfinitiesx} and \autoref{Lemma:NegVInfinitiesy}, there exist
constants $C_{x}$ and $C_{y}$ such that $x^{-1} = C_{x} z^{n} + O(z^{n + 1})$
and $y^{-1} = C_{y} z^{d} + O(z^{d + 1})$, so we are done by substituting these
into $x^{a - 1} y^{-b} \, dx$. 

\end{enumerate}
\end{proof}

In light of \autoref{Lemma:BasisForV}, we make the following definition.
\begin{mydefinition}
Let the image of $\psi$ be $\{ w_{1}, \cdots, w_{g} \}$ for $w_{1} < w_{2} <
\cdots < w_{g}$. Let $(a_{i}, b_{i})$ be the unique element of $A$ such that
$\psi(a_{i}, b_{i}) = w_{i}$. Define
\[
\kappa_{i} \colonequals C_{a_i, b_i}^{-1} x^{a_i - 1} y^{-b_{i}} \, dx.
\]
\end{mydefinition}

\begin{mycorollary}
\label{Corollary:Coordinatekappai}
The set $\{ \kappa_{1}, \cdots, \kappa_{g} \}$ is a basis for $V$ such that
\begin{equation}
\label{Equation:Coordinatekappai}
\kappa_i = z^{w_{i} - 1} \left( 1 + O(z) \right) \, dz.
\end{equation}
\end{mycorollary}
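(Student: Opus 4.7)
The plan is to observe that this corollary is essentially a bookkeeping consequence of \autoref{Lemma:BasisForV}. By \autoref{Lemma:BasisForVInjection}, the map $\psi$ is injective, so its image has exactly $|A| = g$ elements, and the assignment $i \mapsto (a_i, b_i)$ is a well-defined bijection between $\{1, \ldots, g\}$ and $A$.

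First I would check that $\{\kappa_1, \ldots, \kappa_g\}$ is a basis for $V$. Since each $C_{a_i, b_i}$ is a nonzero constant by \autoref{Lemma:BasisForVExpansion}, the set $\{\kappa_i\}$ is obtained from $\{x^{a_i - 1} y^{-b_i}\, dx\}_{i = 1}^{g}$ by rescaling each element by a nonzero scalar. The latter set is $\{x^{a - 1} y^{-b}\, dx : (a, b) \in A\}$ reindexed via the bijection $i \mapsto (a_i, b_i)$, which is a basis of $V$ by \autoref{Lemma:BasisForVBasis}. Hence $\{\kappa_1, \ldots, \kappa_g\}$ is also a basis of $V$.

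Next I would verify the local expansion \eqref{Equation:Coordinatekappai}. Applying \autoref{Lemma:BasisForVExpansion} to $(a_i, b_i)$ gives
\[
x^{a_i - 1} y^{-b_i}\, dx = C_{a_i, b_i}\, z^{b_i d - a_i n - 1}\bigl(1 + O(z)\bigr)\, dz.
\]
Dividing both sides by $C_{a_i, b_i}$ and using the definition $w_i = \psi(a_i, b_i) = d b_i - n a_i$ yields $\kappa_i = z^{w_i - 1}(1 + O(z))\, dz$, as desired.

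There is no substantive obstacle here; the work has already been done in \autoref{Lemma:BasisForV}, and this corollary simply repackages it in a form indexed by the pole orders $w_i$ at $\infty$, which will be convenient for the subsequent intersection-multiplicity computations.
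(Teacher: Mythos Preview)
Your proof is correct and takes essentially the same approach as the paper, which simply states that the corollary is a restatement of \autoref{Lemma:BasisForVExpansion}. You have spelled out the details (injectivity of $\psi$, nonzero rescaling of a basis, and the expansion), but the underlying argument is identical.
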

\begin{proof}
This is a restatement of \autoref{Lemma:BasisForVExpansion}.
\end{proof}

\begin{mydefinition}
Let $u_{w_{i}}$ be the coordinate function on $V^{\vee}$ associated to
$\kappa_{i}$, i.e., if $\langle \cdot, \cdot \rangle : V^{\vee} \times V \to
\mathbf{C}$ is the natural bilinear pairing, then for all $v \in V^{\vee}$,
$u_{w_{i}}(v) = \langle v, \kappa_{i} \rangle$. 
\end{mydefinition}

\begin{mydefinition}
Let $\zeta^{*}$ be the automorphism of $V$ induced by $\zeta$ and let
$\zeta_{*}$ be corresponding dual automorphism of $V^{\vee}$.
\end{mydefinition}

\begin{mylemma}
\label{Lemma:CoordinateFunctionPullbackZeta}
For all $v \in V^{\vee}$,
\[
u_{w_{i}}(\zeta_{*} v) = \zeta^{-b_{i}} u_{w_{i}}(v).
\]
\end{mylemma}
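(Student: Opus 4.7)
The plan is to compute $\zeta^{*} \kappa_{i}$ directly and then transfer the computation to the dual side. Since the automorphism $\zeta \colon \mathcal{C} \to \mathcal{C}$ acts by $(x, y) \mapsto (x, \zeta y)$, its pullback on differentials fixes $x$ and $dx$ and sends $y^{-b_{i}}$ to $\zeta^{-b_{i}} y^{-b_{i}}$. Applied to the definition $\kappa_{i} = C_{a_{i}, b_{i}}^{-1} x^{a_{i} - 1} y^{-b_{i}} \, dx$, this gives $\zeta^{*} \kappa_{i} = \zeta^{-b_{i}} \kappa_{i}$, so each $\kappa_{i}$ is an eigenvector of $\zeta^{*}$ on $V$ with eigenvalue $\zeta^{-b_{i}}$.

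Next I would invoke the defining adjointness between $\zeta^{*}$ and $\zeta_{*}$ under the pairing $\langle \cdot, \cdot \rangle \colon V^{\vee} \times V \to \mathbf{C}$; namely $\langle \zeta_{*} v, \kappa \rangle = \langle v, \zeta^{*} \kappa \rangle$ for all $v \in V^{\vee}$ and $\kappa \in V$. Combining these two ingredients gives
\[
u_{w_{i}}(\zeta_{*} v) = \langle \zeta_{*} v, \kappa_{i} \rangle = \langle v, \zeta^{*} \kappa_{i} \rangle = \zeta^{-b_{i}} \langle v, \kappa_{i} \rangle = \zeta^{-b_{i}} u_{w_{i}}(v),
\]
which is exactly the claim.

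There is no real obstacle here; the lemma is essentially a bookkeeping statement matching the index $b_{i}$ in the definition of $\kappa_{i}$ to the $\zeta$-eigenvalue on $V^{\vee}$. The only subtlety is a sign/duality convention: one must be careful that $\zeta_{*}$ on $V^{\vee}$ is defined as the transpose (not the inverse transpose) of $\zeta^{*}$ on $V$, so that the eigenvalue of $\kappa_{i}$ under $\zeta^{*}$ matches the eigenvalue of $u_{w_{i}}$ under $\zeta_{*}$ and not its inverse. With the convention fixed by the pairing identity above, the calculation goes through cleanly.
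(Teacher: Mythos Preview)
Your proof is correct and follows exactly the same approach as the paper: compute $\zeta^{*}\kappa_{i} = \zeta^{-b_{i}}\kappa_{i}$ from the action $\zeta^{*}x = x$, $\zeta^{*}y = \zeta y$, and then dualize. The paper's proof is terser (it just says ``the lemma follows by taking the dual of this relationship''), but you have simply made explicit the pairing identity $\langle \zeta_{*} v, \kappa\rangle = \langle v, \zeta^{*}\kappa\rangle$ that underlies that step.
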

\begin{proof}
Since $\zeta$ acts on the function field of $\mathcal{C}$ by $\zeta^{*} x = x$
and $\zeta^{*} y = \zeta y$, it follows that $\zeta^{*} \kappa_{i} =
\zeta^{-b_{i}} \kappa_{i}$, and the lemma follows by taking the dual of this
relationship.
\end{proof}

\begin{mydefinition}
Let $U$ be a small simply-connected neighborhood of $0$ in
$\AJ_{\infty}(\mathcal{C})$. Note that $z \circ \AJ_{\infty}^{-1}$ is a local
coordinate on $U$; we will abuse notation and denote it by $z$. 

Since $1 - \zeta$ is a covering map, let $U'$ be the neighborhood of $0$ in
$\mathcal{C}'$ such that $(1 - \zeta)(U') = U$ and $(1 - \zeta)|_{U'} \colon U'
\to U$ is an isomorphism. Let $t = (1 - \zeta)^{*} z$ be a local coordinate on
$U'$.
\end{mydefinition}

We have the following commutative diagram.
\begin{center}
\begin{tikzpicture}
\node(p00) at (0, 0) {$\{ 0 \}$};
\node(p01) at (0, 2) {$\{ 0 \}$};
\node(U10) at (3, 0) {$U$};
\node(Up11) at (3, 2) {$U'$};
\node(C20) at (6, 0) {$\AJ_{\infty}(\mathcal{C})$};
\node(Cp21) at (6, 2) {$\mathcal{C}'$};
\node(J30) at (9, 0) {$\mathcal{J}$};
\node(J31) at (9, 2) {$\mathcal{J}$};
\draw[->] (p01) -- node[right] {$1 - \zeta$} (p00);
\draw[->] (Up11) -- node[right] {$1 - \zeta$} (U10);
\draw[->] (Cp21) -- node[right] {$1 - \zeta$} (C20);
\draw[->] (J31) -- node[right] {$1 - \zeta$} (J30);
\draw[right hook-latex] (p00) -- (U10);
\draw[right hook-latex] (U10) -- (C20);
\draw[right hook-latex] (C20) -- (J30);
\draw[right hook-latex] (p01) -- (Up11);
\draw[right hook-latex] (Up11) -- (Cp21);
\draw[right hook-latex] (Cp21) -- (J31);
\end{tikzpicture}
\end{center}

\begin{mydefinition}
Since $\pi$ is also a covering map, let $\widetilde{U} \subseteq
\pi^{-1}(\AJ_{\infty}(\mathcal{C}))$ be a neighborhood of $0$ in
$\pi^{-1}(\AJ_{\infty}(\mathcal{C}))$ such that $\pi(\widetilde{U}) = U$ and
$\pi|_{\widetilde{U}} \colon \widetilde{U} \to U$ is an isomorphism. Similarly,
let $\widetilde{U}' \subseteq \pi^{-1}(\mathcal{C}')$ be a neighborhood of $0$
in $\pi^{-1}(\mathcal{C}')$ such that $\pi(\widetilde{U'}) = U'$ and
$\pi|_{\widetilde{U'}} \colon \widetilde{U'} \to U'$ is an isomorphism.

Note that $z \circ \pi$ is a local coordinate on $\widetilde{U}$; we will abuse
notation and denote it by $z$. Similarly, we will abuse notation and write $t$
to be the analogous local coordinate on $\widetilde{U'}$.
\end{mydefinition}

Going to the universal cover yields the following commutative diagram.
\begin{center}
\begin{tikzpicture}
\node(UCp00) at (0, 0) {$\{ 0 \}$};
\node(UCp01) at (0, 2) {$\{ 0 \}$};
\node(UCU10) at (3, 0) {$\widetilde{U}$};
\node(UCUp11) at (3, 2) {$\widetilde{U'}$};
\node(UCC20) at (6, 0) {$\pi^{-1}(\AJ_{\infty}(\mathcal{C}))$};
\node(UCCp21) at (6, 2) {$\pi^{-1} \mathcal{C}'$};
\node(UCJ30) at (9, 0) {$V^{\vee}$};
\node(UCJ31) at (9, 2) {$V^{\vee}$};
\draw[->] (UCp01) -- node[right] {$(1 - \zeta)_{*}$} (UCp00);
\draw[->] (UCUp11) -- node[right] {$(1 - \zeta)_{*}$} (UCU10);
\draw[->] (UCCp21) -- node[right] {$(1 - \zeta)_{*}$} (UCC20);
\draw[->] (UCJ31) -- node[right] {$(1 - \zeta)_{*}$} (UCJ30);
\draw[right hook-latex] (UCp00) -- (UCU10);
\draw[right hook-latex] (UCU10) -- (UCC20);
\draw[right hook-latex] (UCC20) -- (UCJ30);
\draw[right hook-latex] (UCp01) -- (UCUp11);
\draw[right hook-latex] (UCUp11) -- (UCCp21);
\draw[right hook-latex] (UCCp21) -- (UCJ31);
\end{tikzpicture}
\end{center}

\begin{mylemma} \hfill
\label{Lemma:UWiLocalCoordinateExpansion}
\begin{enumerate}[label=\upshape{(\arabic*)},
ref={\autoref{Lemma:UWiLocalCoordinateExpansion}(\arabic*)}]

\item \label{Lemma:UWiLocalCoordinateExpansionU}
The following equality holds in $\widetilde{U}$:
\[
u_{w_{i}} = w_{i}^{-1} z^{w_{i}} (1 + O(z)).
\]

\item \label{Lemma:UWiLocalCoordinateExpansionUp}
The following equality holds in $\widetilde{U'}$:
\[
u_{w_{i}} = w_{i}^{-1} (1 - \zeta^{-b_{i}})^{-1} t^{w_{i}} (1 + O(t)).
\]
\end{enumerate}
\end{mylemma}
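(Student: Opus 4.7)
The plan is to handle the two parts in sequence, using part (1) as the input for part (2) via the linearity of $(1 - \zeta)_{*}$ on $V^{\vee}$.

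For part (1), I would unwind the definitions. A point $\tilde v \in \widetilde{U}$ corresponds under $\pi$ to $\mathrm{AJ}_{\infty}(P)$ for some $P$ near $\infty$, and by \autoref{Theorem:AbelJacobiWithIntegrals} we have $u_{w_i}(\tilde v) = \langle \tilde v, \kappa_i \rangle = \int_{\gamma} \kappa_i$ where $\gamma$ is a path on $\mathcal{C}$ from $\infty$ to $P$ (the choice of $\widetilde{U}$ as a single sheet above $U$ removes the ambiguity coming from $\Lambda$). Substituting the local expansion $\kappa_i = z^{w_i - 1}(1 + O(z))\, dz$ from \autoref{Corollary:Coordinatekappai} and integrating term-by-term from $0$ to $z(P)$ gives
\[
u_{w_i}(\tilde v) = \int_{0}^{z(P)} z^{w_i - 1}(1 + O(z))\, dz = w_i^{-1} z^{w_i}(1 + O(z)),
\]
which is exactly (1).

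For part (2), the key observation is that $(1 - \zeta)_{*}$ is a $\mathbf{C}$-linear endomorphism of $V^{\vee}$, and by \autoref{Lemma:CoordinateFunctionPullbackZeta}, for every $v \in V^{\vee}$,
\[
u_{w_i}\bigl((1 - \zeta)_{*} v\bigr) = u_{w_i}(v) - u_{w_i}(\zeta_{*} v) = (1 - \zeta^{-b_i})\, u_{w_i}(v).
\]
The factor $1 - \zeta^{-b_i}$ is nonzero because $1 \le b_i \le n - 1$. By the commutative diagram, $(1 - \zeta)_{*}$ restricts to an isomorphism $\widetilde{U'} \to \widetilde{U}$, and the local coordinates are related by $t = ((1 - \zeta)_{*})^{*} z$, so for $\tilde v' \in \widetilde{U'}$ we have $z\bigl((1 - \zeta)_{*} \tilde v'\bigr) = t(\tilde v')$. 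Applying part (1) to $(1 - \zeta)_{*} \tilde v' \in \widetilde{U}$ and dividing by $1 - \zeta^{-b_i}$ yields
\[
u_{w_i}(\tilde v') = (1 - \zeta^{-b_i})^{-1}\, w_i^{-1}\, t^{w_i}(1 + O(t)),
\]
which is (2).

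The step most likely to cause trouble is the bookkeeping in part (2): one has to make sure that the lift $\widetilde{U'} \subseteq V^{\vee}$ chosen (the sheet over $U'$ containing $0$) matches the lift $\widetilde{U}$ under $(1 - \zeta)_{*}$, so that the linear algebra identity $u_{w_i}((1 - \zeta)_{*} v) = (1 - \zeta^{-b_i}) u_{w_i}(v)$ on $V^{\vee}$ actually translates to the statement about local coordinates $t$ and $z$ on the two cover-pieces. Once the diagram is interpreted correctly, each step is routine.
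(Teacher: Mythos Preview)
Your proposal is correct and follows essentially the same route as the paper: part~(1) is obtained by integrating the local expansion of $\kappa_i$ from \autoref{Corollary:Coordinatekappai} along a path lifted via \autoref{Theorem:AbelJacobiWithIntegrals}, and part~(2) follows from part~(1) together with the identity $u_{w_i}((1-\zeta)_* v) = (1-\zeta^{-b_i})\,u_{w_i}(v)$ coming from \autoref{Lemma:CoordinateFunctionPullbackZeta}. The only small point the paper makes more explicit is that the additive $\Lambda$-ambiguity in part~(1) is zero, which it checks by evaluating at $v=0$; you allude to this but could state it outright.
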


\begin{proof} \hfill
\begin{enumerate}[label=\upshape{(\arabic*)},
ref={the proof of \autoref{Lemma:UWiLocalCoordinateExpansion}(\arabic*)}]

\item \label{LemmaProof:UWiLocalCoordinateExpansionU}
Let $\kappa \in V$. Suppose that $P \in \mathcal{C}$ is such that
$\AJ_{\infty}(P) \in U$ and $\gamma$ is a path from $\infty$ to $P$ that lies in
$U$. Since $U$ is simply-connected, the value of the integral $\int_{\gamma}
\kappa$ is independent of the choice of $\gamma$ (as long as $\gamma$ is
contained in $U$), so we will denote this integral by $\int_{\infty}^{P}
\kappa$. By \autoref{Theorem:AbelJacobiWithIntegrals},
\[
\xi \left( \AJ_{\infty}(P) \right) = \left( \kappa \in V \mapsto
\int_{\infty}^{P} \kappa \in \mathbf{C} \right) \pmod{\Lambda} \in V^{\vee} /
\Lambda.
\]
Since $U$ is simply-connected, we may lift this equality to $\widetilde{U}$;
that is, there exists some $\lambda \in \Lambda$ such that for all
$v \in \widetilde{U}$, 
\[
v = \lambda + \left( \kappa \mapsto \int_{\infty}^{\AJ_{\infty}^{-1}(\pi(v))}
\kappa \right).
\]
Taking $v = 0$ shows that $\lambda = 0$. Hence, by definition of
$u_{w_{i}}$,
\begin{equation}
\label{Equation:LocalCoordinateInTermsOfBasis}
u_{w_{i}}( v ) =
\int_{\infty}^{\AJ_{\infty}^{-1}(\pi(v))} \kappa_{i} \text{ for all
} v \in \widetilde{U}.
\end{equation}
Let $P = \AJ_{\infty}^{-1}\left( \pi(v) \right)$, so that
\autoref{Corollary:Coordinatekappai} implies
\begin{equation}
\label{Equation:EvaluateIntegralOnBasisForm}
\int_{\infty}^{P} \kappa_{i} = \int_{0}^{z(P)} z^{w_{i} - 1} (1 + O(z)) \, d z =
w_{i}^{-1} (z(P))^{w_{i}} \left( 1 + O(z(P)) \right).
\end{equation}
Since $z(v)$ was defined to be $z(P)$, we are done by
\eqref{Equation:LocalCoordinateInTermsOfBasis} and
\eqref{Equation:EvaluateIntegralOnBasisForm}.

\item \label{LemmaProof:UWiLocalCoordinateExpansionUp}
\autoref{Lemma:CoordinateFunctionPullbackZeta} implies that for all $v \in
V^{\vee}$, 
\begin{equation}
\label{Equation:TransformCoordinateBy1mz}
u_{w_{i}}(v) = (1 - \zeta^{-b_{i}})^{-1} u_{w_{i}}( (1 - \zeta)_{*} v),
\end{equation}
so since $\widetilde{U} = (1 - \zeta)_{*} \widetilde{U'}$,
\autoref{Lemma:UWiLocalCoordinateExpansionUp} is a consequence of
\eqref{Equation:TransformCoordinateBy1mz}, the definition of $t$, and
\autoref{Lemma:UWiLocalCoordinateExpansionU}.  
\end{enumerate}
\end{proof}

Suppose that $D \in \mathcal{C}' \cap \Theta$. Let $e_{D} \in \pi^{-1}(D)$.

\begin{mydefinition}
Define $\theta$, $\Delta$, $\delta$ as on page 5208 of \cite{Nakay2016} to be the
theta function, the Riemann divisor, and Riemann's constant, respectively.
(Nakayashiki mentions on the same page that $\delta = \Delta - (g - 1) \infty$.)
Then $\delta \in \mathcal{J}$, so let $e_{\delta} \in \pi^{-1}(\delta)$. 
\end{mydefinition}

\begin{mydefinition}
For $F \in \mathcal{J}$, let $T_{F} : \mathcal{J} \to \mathcal{J}$ be the
``translation by $F$'' map. For $e \in V^{\vee}$, let $T_{e} : V^{\vee} \to
V^{\vee}$ be the ``translation by $e$'' map.
\end{mydefinition}

\begin{mytheorem}
\label{Theorem:RiemannVanishingTheorem}
The vanishing locus of $\theta$ is $(\pi \circ T_{e_{\delta}})^{-1} \Theta$.
\end{mytheorem}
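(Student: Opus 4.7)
The plan is essentially to cite the classical Riemann Vanishing Theorem; the substantive work here is only to verify that the normalizations used match those of Nakayashiki, since $\theta$, $\Delta$, and $\delta$ are imported directly from \cite{Nakay2016}. Concretely, $\theta$ is a holomorphic function on $V^{\vee}$ which is quasi-periodic with respect to the lattice $\Lambda$: translation by $\lambda \in \Lambda$ multiplies $\theta$ by a nowhere-vanishing automorphy factor. Hence the vanishing locus $Z(\theta) \subseteq V^{\vee}$ is $\Lambda$-invariant, and so $\pi(Z(\theta))$ is a well-defined divisor on $\mathcal{J}$.

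A standard Chern-class computation (or equivalently, comparing quasi-periodicity factors) shows that $\pi(Z(\theta))$ is algebraically equivalent to $\Theta$, so by the seesaw principle it equals $T_{F}^{-1}(\Theta)$ for some uniquely determined $F \in \mathcal{J}$. Calling this $F$ \emph{Riemann's constant} and showing $F = \delta$ is the content of the classical Riemann Vanishing Theorem: for any non-special effective divisor $P_{1} + \cdots + P_{g-1}$ of degree $g-1$, the divisor class $[P_{1} + \cdots + P_{g-1} - \Delta]$ lies in $Z(\theta)/\Lambda$ (this is the usual formulation one finds, e.g., in Mumford's \emph{Tata Lectures on Theta}, or in Farkas--Kra), which together with $\delta = \Delta - (g-1)\infty$ recorded on page 5208 of \cite{Nakay2016} gives $\pi(Z(\theta)) = \Theta + \delta = T_{-\delta}^{-1}\Theta$. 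Unwinding the convention $e_{\delta} \in \pi^{-1}(\delta)$, this is precisely $(\pi \circ T_{e_{\delta}})^{-1}\Theta$.

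The main (and really only) obstacle is a bookkeeping one: making sure the sign of $\delta$, the choice of basepoint $\infty$, and Nakayashiki's convention for $\theta$ and $\Delta$ line up with the standard form of the theorem, so that no inadvertent translation by $\pm \delta$ or $\pm(g-1)\infty$ is introduced. Since all of these definitions are taken verbatim from \cite{Nakay2016}, the cleanest proof is simply to cite the statement of the Riemann Vanishing Theorem in that reference (or in a standard textbook such as Mumford's \emph{Tata Lectures}) and note that the formulation there is exactly the claim above.
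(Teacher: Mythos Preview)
Your approach is essentially the same as the paper's: both simply invoke the classical Riemann vanishing theorem and then unwind conventions. The paper cites Fay for the statement that the vanishing locus of $\theta$ is $\pi^{-1}T_{\delta}^{-1}\Theta$, and then observes $T_{\delta}\circ\pi = \pi\circ T_{e_{\delta}}$; you do the same thing with slightly more commentary.

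One bookkeeping slip to flag, since you yourself identify sign-tracking as the only hazard: from your own cited form of the theorem, the class $[P_{1}+\cdots+P_{g-1}-\Delta] = [P_{1}+\cdots+P_{g-1}-(g-1)\infty]-\delta$ lies in $\Theta-\delta$, so $\pi(Z(\theta))=\Theta-\delta=T_{\delta}^{-1}\Theta$, not $\Theta+\delta=T_{-\delta}^{-1}\Theta$ as you wrote. With the correct sign, the identification with $(\pi\circ T_{e_{\delta}})^{-1}\Theta$ goes through (an element $v$ lies in the latter iff $\pi(v)+\delta\in\Theta$, i.e., $\pi(v)\in\Theta-\delta$); with your sign it does not. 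This is exactly the kind of error you warned about, so in the write-up just cite the reference directly as you suggest at the end and skip the intermediate $\Theta\pm\delta$ step.
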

\begin{proof}
Riemann's vanishing theorem (see pages 6--7 of \cite{Fay_1973}) states that
the vanishing locus of $\theta$ is $\pi^{-1} T_{\delta}^{-1} \Theta =
(T_{\delta} \circ \pi)^{-1} \Theta$. Since $T_{\delta} \circ \pi = \pi \circ
T_{e_{\delta}}$, we are done.
\end{proof}

\begin{mycorollary}
\label{Corollary:RestateiDvanishing}
$i(D)$ is the order of vanishing of $(\theta \circ T_{e_{D} -
e_{\delta}})|_{\widetilde{U'}}$ at $0$.
\end{mycorollary}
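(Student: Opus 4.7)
The plan is to compute $i(D)$ after passing to the universal cover $V^{\vee}$, where a translate of $\Theta$ is cut out globally by the theta function. First I would observe that since $\pi : V^{\vee} \to \mathcal{J}$ is a covering map and hence \'etale, intersection multiplicities lift: $i(D)$ equals the intersection multiplicity at $e_{D}$ of $\pi^{-1}\mathcal{C}'$ with $\pi^{-1}\Theta$. By \autoref{Theorem:RiemannVanishingTheorem}, the zero locus of $\theta$ on $V^{\vee}$ is $(\pi \circ T_{e_{\delta}})^{-1} \Theta$, so the holomorphic function $\theta \circ T_{-e_{\delta}}$ has zero divisor $\pi^{-1}\Theta$; since $\Theta$ is a reduced divisor (and $\theta$ is known to cut it out with multiplicity one), this function serves as a local defining equation for $\pi^{-1}\Theta$ near every one of its points. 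Because $\pi^{-1}\mathcal{C}'$ is smooth at $e_{D}$, this identifies $i(D)$ with the order of vanishing at $e_{D}$ of $(\theta \circ T_{-e_{\delta}})|_{\pi^{-1}\mathcal{C}'}$.

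The next step is to translate the computation from $e_{D}$ to $0$ so that we can work on $\widetilde{U'}$. The key observation is that $\mathcal{C}'$ is invariant under translation by $\mathcal{J}[1 - \zeta]$: if $v \in \mathcal{C}'$ and $D_{0} \in \mathcal{J}[1 - \zeta]$, then $(1 - \zeta)(v + D_{0}) = (1 - \zeta) v \in \AJ_{\infty}(\mathcal{C})$, so $v + D_{0} \in \mathcal{C}'$. By \autoref{Theorem:Unique-Intersection-In-J-1-z}, $D \in \mathcal{J}[1 - \zeta]$, and hence $\pi^{-1}\mathcal{C}'$ is invariant under translation by $e_{D}$ in $V^{\vee}$. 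Therefore the translation $T_{e_{D}} : V^{\vee} \to V^{\vee}$ carries $\widetilde{U'}$ isomorphically onto a neighborhood of $e_{D}$ in $\pi^{-1}\mathcal{C}'$, and pulling back the defining function gives $(\theta \circ T_{-e_{\delta}}) \circ T_{e_{D}} = \theta \circ T_{e_{D} - e_{\delta}}$; the order of vanishing at $e_{D}$ on the target side becomes the order of vanishing at $0$ of the restriction to $\widetilde{U'}$.

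The main subtlety I expect is the verification that one can treat $\widetilde{U'}$ --- which is a chart centered at $0$ --- as the correct local parameterization of $\pi^{-1}\mathcal{C}'$ near $e_{D}$; this hinges precisely on $D$ lying in $\mathcal{J}[1 - \zeta]$, so that $T_{e_{D}}$ stabilizes $\pi^{-1}\mathcal{C}'$. Were $D$ a general point of $\mathcal{C}' \cap \Theta$, the natural chart near $e_{D}$ would sit in $\pi^{-1}(\mathcal{C}' - D)$, which need not agree with $\pi^{-1}\mathcal{C}'$. The secondary check, that $\theta$ cuts out its divisor with multiplicity exactly one, is standard but worth flagging so that the order of vanishing really equals the intersection multiplicity rather than a multiple of it.
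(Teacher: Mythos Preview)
Your proposal is correct and follows essentially the same approach as the paper: lift to the universal cover via a local diffeomorphism, use Riemann's vanishing theorem to identify $\Theta$ with the zero locus of $\theta$, and then use $D \in \mathcal{J}[1-\zeta]$ to translate the local chart $\widetilde{U'}$ at $0$ to a chart at the relevant point. The only cosmetic difference is bookkeeping: the paper pulls back along the single map $\pi \circ T_{e_{\delta}}$ (landing at $e_{D} - e_{\delta}$) and then translates by $e_{D} - e_{\delta}$, whereas you pull back along $\pi$ (landing at $e_{D}$), absorb the $e_{\delta}$-shift into the function as $\theta \circ T_{-e_{\delta}}$, and then translate by $e_{D}$; your explicit justification of the $\mathcal{J}[1-\zeta]$-invariance of $\mathcal{C}'$ and your flag about $\theta$ cutting out its divisor with multiplicity one are points the paper leaves implicit.
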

\begin{proof}
By definition, $i(D)$ is the intersection multiplicity of $\Theta$ and
$\mathcal{C}'$ at $D$. Since $\pi \circ T_{e_{\delta}}$ is a local
diffeomorphism at $e_{D} - e_{\delta}$, we know that $i(D)$ is the intersection
multiplicity of $(\pi \circ T_{e_{\delta}})^{-1} \Theta$ and $(\pi \circ
T_{e_{\delta}})^{-1} \mathcal{C}'$ at $e_{D} - e_{\delta}$, so by
\autoref{Theorem:RiemannVanishingTheorem}, 
\begin{equation}
\label{Equation:iDoovRiemannVanishing}
i(D)\text{ is the order of vanishing of }\theta|_{(\pi \circ
T_{e_{\delta}})^{-1} \mathcal{C}'}\text{ at }e_{D} - e_{\delta}. 
\end{equation}
Since $D \in \mathcal{J}[1 - \zeta]$, $T_{D}(U')$ is a neighborhood of $D$ in
$\mathcal{C}'$, so $T_{e_{D} - e_{\delta}}(\widetilde{U'})$ is a neighborhood of
$e_{D} - e_{\delta}$ in $(\pi \circ T_{e_{\delta}})^{-1} \mathcal{C}'$, so
\eqref{Equation:iDoovRiemannVanishing} yields
\[
i(D)\text{ is the order of vanishing of }\theta|_{T_{e_{D} -
e_{\delta}}(\widetilde{U'})}\text{ at }e_{D} - e_{\delta}, 
\]
which is equivalent to
\[
i(D)\text{ is the order of vanishing of }(\theta \circ T_{e_{D} -
e_{\delta}})|_{\widetilde{U'}}\text{ at }0 
\]
since translation by $e_{D} - e_{\delta}$ is an isomorphism on $V^{\vee}$.
\end{proof}

\begin{mydefinition}
As on page 5211 of \cite{Nakay2016}, let $s_{\lambda_{D}} \in
\mathbf{Q}[t_{1}, t_{2}, \cdots]$ be the Schur function associated to the
partition $\lambda_{D}$. Nakayashiki proves that $s_{\lambda_{D}}$ lies in the
subring $\mathbf{Q}[t_{w_{1}}, \cdots, t_{w_{g}}]$ (Proposition 1 on page 5211
of \cite{Nakay2016}). Assign weight $w_{i}$ to the variable $t_{w_{i}}$. Then
$s_{\lambda_{D}}$ is weight-homogeneous and it has weight $|\lambda_{D}|$.
\end{mydefinition}

\begin{myproposition}
\label{Proposition:IntMultAtleastInflectWt}
For all $D \in \mathcal{C}' \cap \Theta$,
\[
i(D) \ge |\lambda_{D}|.
\]
\end{myproposition}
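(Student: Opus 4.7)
The plan is to reduce to a statement about orders of vanishing via \autoref{Corollary:RestateiDvanishing}, and then to exploit Nakayashiki's Schur function expansion of the theta function near a point on $\Theta$. By that corollary, $i(D)$ equals the order of vanishing at $0$ of $(\theta \circ T_{e_{D} - e_{\delta}})|_{\widetilde{U'}}$, so it suffices to show this order is at least $|\lambda_{D}|$.

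First, I would invoke the main expansion result of \cite{Nakay2016} (the Schur function leading term of the shifted sigma/theta function). In the coordinates $u_{w_{1}}, \ldots, u_{w_{g}}$ on $V^{\vee}$, assigning weight $w_{i}$ to the variable corresponding to $u_{w_{i}}$, the Taylor expansion of $\theta \circ T_{e_{D} - e_{\delta}}$ at $0$ takes the form
\[
\theta \circ T_{e_{D} - e_{\delta}} = c \cdot s_{\lambda_{D}}(u_{w_{1}}, \ldots, u_{w_{g}}) + (\text{terms of strictly greater weight}),
\]
for a nonzero constant $c$, where $s_{\lambda_{D}}$ is weight-homogeneous of weight $|\lambda_{D}|$ by the statement preceding this proposition.

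Second, I would pull the expansion back to $\widetilde{U'}$ using \autoref{Lemma:UWiLocalCoordinateExpansionUp}. Each coordinate restricts to
\[
u_{w_{i}}|_{\widetilde{U'}} = w_{i}^{-1} (1 - \zeta^{-b_{i}})^{-1} \, t^{w_{i}} (1 + O(t)),
\]
which is a nonzero multiple of $t^{w_{i}}$ plus higher-order terms. Consequently any monomial $\prod_{i} u_{w_{i}}^{e_{i}}$ of weight $\sum e_{i} w_{i}$ pulls back to a power series in $t$ beginning at $t^{\sum e_{i} w_{i}}$, and hence every weight-homogeneous piece of weight $w$ in the Nakayashiki expansion restricts to a series in $t$ of order at least $w$. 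Since the lowest weight appearing is $|\lambda_{D}|$, the pullback $(\theta \circ T_{e_{D} - e_{\delta}})|_{\widetilde{U'}}$ vanishes to order at least $|\lambda_{D}|$ at $0$, and \autoref{Corollary:RestateiDvanishing} gives $i(D) \ge |\lambda_{D}|$.

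The main obstacle is invoking Nakayashiki's expansion with the correct normalizations: one must match his conventions for the base point (shift by the Riemann constant $\delta$), the specific system of local coordinates dual to the holomorphic differentials, and the identification of the partition in his paper with our $\lambda_{D}$ from \autoref{Definition:PartitionNakayashiki}. Since only the inequality $i(D) \ge |\lambda_{D}|$ is claimed here, no information about the Schur leading coefficient is needed; the matching equality $i(D) = |\lambda_{D}|$ in \autoref{Theorem:Intersection-Multiplicities} will be obtained globally by summing over $D \in \mathcal{C}' \cap \Theta$ and comparing \autoref{Lemma:W-Sum} with \autoref{Corollary:I-Sum}.
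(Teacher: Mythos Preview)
Your proposal is correct and follows essentially the same route as the paper: invoke \autoref{Corollary:RestateiDvanishing}, apply Nakayashiki's Theorem~10 to get the Schur leading term $s_{\lambda_D}$ of weight $|\lambda_D|$ in the expansion of $\theta \circ T_{e_D - e_\delta}$, and then restrict to $\widetilde{U'}$ via \autoref{Lemma:UWiLocalCoordinateExpansionUp} to see that every term has $t$-order at least $|\lambda_D|$. The paper also notes (as you do under ``normalizations'') that the period matrix $2\omega_1$ in Nakayashiki's conventions is the identity here, which is the only bookkeeping needed to invoke his result directly.
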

\begin{proof}
Applying Theorem 10 on page 5232 of \cite{Nakay2016} to $e = e_{D} -
e_{\delta}$ (the period matrix $2 \omega_{1}$ defined on page 5231 of
\cite{Nakay2016} is the identity matrix in our application) shows that there is
a nonzero constant $C$ such that for all $u \in V^{\vee}$,
\[
C \theta(u + e_{D} - e_{\delta}) = s_{\lambda_{D}}(t)|_{t_{w_{i}} = u_{w_{i}}} +
\text{higher weight terms},
\]
which we rewrite as
\[
(\theta \circ T_{e_{D} - e_{\delta}}) (u) = C^{-1}
s_{\lambda_{D}}(t)|_{t_{w_{i}} = u_{w_{i}}} + \text{higher weight terms}.
\]
Restricting to $u \in \widetilde{U'}$ and applying
\autoref{Lemma:UWiLocalCoordinateExpansionUp} yields
\[
\theta \circ T_{e_{D} - e_{\delta}} = O(t^{|\lambda_{D}|}) \;
\text{ on } \widetilde{U'},
\]
so we are done by \autoref{Corollary:RestateiDvanishing}.
\end{proof}

\begin{proof}[Proof of \autoref{Theorem:Intersection-Multiplicities}]
Combining \autoref{Lemma:W-Sum} and \autoref{Corollary:I-Sum} yields
\[
\sum_{D \in \mathcal{C}' \cap \Theta} (i(D) - |\lambda_{D}|) \le 0,
\]
so we are done by \autoref{Proposition:IntMultAtleastInflectWt}.
\end{proof}

\section{Acknowledgements}
I would like to thank my advisor, Bjorn Poonen, for suggesting the problem and
for his guidance. I would also like to thank Aaron Pixton for helpful
conversations regarding the computations of the intersection multiplicities. In
addition, I would like to thank the referee for a careful review of the paper
and helpful suggestions.

This research was supported in part by a grant from the Simons Foundation
(\#402472 to Bjorn Poonen).

\begin{bibdiv}
\begin{biblist}

\bib{Birkenhake_2004}{incollection}{
  doi = {10.1007/978-3-662-06307-1\_13},
  year = {2004},
  publisher = {Springer, Berlin, Heidelberg},
  series = {Grundlehren der mathematischen Wissenschaften (A Series of Comprehensive Studies in Mathematics)},
  volume = {302},
  pages = {315--362},
  author = {Christina Birkenhake},
	author = {Herbert Lange},
  title = {{Jacobian Varieties}},
  booktitle = {{Complex Abelian Varieties}}
}

\bib{dokchitser2018models}{misc}{
  author = {Tim Dokchitser},
  title = {{Models of curves over DVRs}},
  year = {2018},
  eprint = {1807.00025},
  eprinttype = {arXiv},
}

\bib{Farkas_1992}{incollection}{
  doi = {10.1007/978-1-4612-2034-3\_4},
  year = {1992},
  publisher = {Springer, New York, NY},
  series = {Graduate Texts in Mathematics},
  volume = {71},
  pages = {54--165},
  author = {Hershel M. Farkas},
	author = {Irwin Kra},
  title = {{Compact Riemann Surfaces}},
  booktitle = {{Riemann Surfaces}}
}

\bib{Fay_1973}{incollection}{
  doi = {10.1007/bfb0060091},
  year = {1973},
  publisher = {Springer, Berlin, Heidelberg},
  series = {Lecture Notes in Mathematics},
  volume={352},
  pages = {1--15},
  author = {John D. Fay},
  title = {{Riemann's theta function}},
  booktitle = {{Theta Functions on Riemann Surfaces}}
}

\bib{Fulton_1989}{book}{
  year = {1989},
  publisher = {Addison-Wesley, Advanced Book Program, Redwood City, CA},
  series = {Advanced Book Classics},
  author = {William Fulton},
  title = {{Algebraic Curves: An Introduction to Algebraic Geometry}},
}

\bib{Galbraith2002}{article}{
  title={{Arithmetic on superelliptic curves}},
  author={Galbraith, Steven},
	author={Paulus, Sachar},
	author={Smart, Nigel},
  journal={Math. Comp.},
  volume={71},
  number={237},
  pages={393--405},
  year={2002}
}

\bib{Grothendieck_1971}{incollection}{
  doi = {10.1007/bfb0058657},
  year = {1971},
  publisher = {Springer, Berlin, Heidelberg},
  series = {Lecture Notes in Mathematics},
  volume = {224},
  pages = {1--28},
  author = {Alexander Grothendieck},
  title = {{Morphismes Etales}},
  booktitle = {Rev\^{e}tements Etales et Groupe Fondamental}
}

\bib{Hindry_2000}{incollection}{
  doi = {10.1007/978-1-4612-1210-2\_2},
  year = {2000},
  publisher = {Springer, New York, NY},
  series = {Graduate Texts in Mathematics},
  volume = {201},
  pages = {6--167},
  author = {Marc Hindry},
	author = {Joseph H. Silverman},
  title = {{The Geometry of Curves and Abelian Varieties}},
  booktitle = {Diophantine Geometry}
}

\bib{miranda1995algebraic}{incollection}{
  year={1995},
  publisher={American Mathematical Society, Providence, RI},
  series = {Graduate Studies in Mathematics},
  volume={5},
  pages={233--246},
  author={Miranda, Rick},
  title = {{Inflection Points and Weierstrass Points}},
  booktitle={{Algebraic curves and Riemann surfaces}}
}

\bib{MumfordTataII}{article}{
  title={{Tata lectures on theta II}},
  author={Mumford, David},
  journal={Progr. Math.},
  volume={43},
  year={1984}
}

\bib{Nakay2016}{article}{
  title={{Tau Function Approach to Theta Functions}},
  author={Nakayashiki, Atsushi},
  journal={Int. Math. Res. Not. IMRN},
  volume={2016},
  number={17},
  pages={5202--5248},
  year={2015},
  publisher={Oxford University Press}
}

\bib{poonen2006lectures}{misc}{
  title={Lectures on rational points on curves},
  author={Poonen, Bjorn},
  howpublished={\url{https://math.mit.edu/~poonen/papers/curves.pdf}},
  month={March},
  year={2006}
}

\bib{prasolov1994problems}{book}{
  year = {1994},
  publisher = {American Mathematical Society, Providence, RI},
  series = {Translations of Mathematical Monographs},
  volume={134},
  author={Prasolov, Viktor V.},
  title={{Problems and Theorems in Linear Algebra}}
}

\bib{schaefer1998computing}{article}{
  title={{Computing a Selmer group of a Jacobian using functions on the curve}},
  author={Schaefer, Edward F},
  journal={Math. Ann},
  volume={310},
  pages={447--471},
  year={1998},
  publisher={Springer}
}

\bib{zarhin2019division}{article}{
  title={{Division by 2 on odd degree hyperelliptic curves and their jacobians}},
  author={Zarhin, Yuri G},
  journal={Izv. Math.},
  volume={83},
  number={3},
  pages={501--520},
  year={2019},
  publisher={IOP Publishing}
}

\bib{zarhin2020halves}{incollection}{
  doi = {10.1017/9781108773355.005},
  year = {2020},
  month = {March},
  publisher = {Cambridge University Press},
  series={London Mathematical Society Lecture Note Series}, 
  volume={459}, 
  pages = {102--118},
  author = {Zarhin, Yuri G},
  title = {{Halves of Points of an Odd Degree Hyperelliptic Curve in its Jacobian}},
  booktitle = {{Integrable Systems and Algebraic Geometry, Volume 2}}
}
\end{biblist}
\end{bibdiv}
\end{document}